\newcommand{\Mdef}[2]{\newcommand{#1}{\relax \ifmmode #2 \else $#2$\fi}}
\newcommand{\height}{\mathrm{ht}}
\newcommand{\im}{\mathrm{im}}
\newcommand{\sm }{\wedge}
\newcommand{\tensor}{\otimes}
\newcommand{\sdr}{\rtimes}
\newcommand{\Hom}{\mathrm{Hom}}
\Mdef{\bhom}{\mathbf{\hat{H}om}}
\Mdef{\Mod}{\mathrm{mod}}
\newcommand{\st}{\; | \;}
\newtheorem{thm}{Theorem}[section]
\newtheorem{lemma}[thm]{Lemma}
\newtheorem{prop}[thm]{Proposition}
\newtheorem{cor}[thm]{Corollary}
\theoremstyle{definition}
\newtheorem{defn}[thm]{Definition}
\newtheorem{warning}[thm]{Warning}
\newtheorem{example}[thm]{Example}
\newtheorem{remark}[thm]{Remark}
\newcommand{\qqed}{\qed \\[1ex]}
\renewenvironment{proof}[1][\hspace*{-.8ex}]{\noindent {\bf Proof #1:\;}}{\qqed}
\Mdef{\PH} {\Phi^H}
\Mdef{\PK} {\Phi^K}
\Mdef{\PL} {\Phi^L}
\Mdef{\PT} {\Phi^{\T}}
\Mdef{\ef}{E{\cF}_+}
\Mdef{\etf}{\widetilde{E}{\cF}}
\Mdef{\eg}{E{G}_+}
\Mdef{\etg}{\tilde{E}{G}}
\Mdef{\infl}{\mathrm{inf}}
\Mdef{\defl}{\mathrm{def}}
\Mdef{\res}{\mathrm{res}}
\Mdef{\ind}{\mathrm{ind}}
\Mdef{\coind}{\mathrm{coind}}
\Mdef{\univ}{\mathcal{U}}
\Mdef{\Fp}{\mathbb{F}_p}
\Mdef{\Zpinfty}{\Z /p^{\infty}}
\Mdef{\Zpadic}{\Z_p^{\wedge}}
\newcommand{\adjunction}[4]{
\diagram
#1:#2 \rrto<0.7ex> &&
#3  \llto<0.7ex> :#4 
\enddiagram}
\newcommand{\recollement}[3]{
\diagram 
#1
 \rrto<0.0ex> 
 &&
 #2 \llto<0.7ex>
 \llto<-0.7ex>
 \rrto<0.0ex>
  &&
 #3
\llto<0.7ex>
 \llto<-0.7ex>
\enddiagram}
\newcommand{\lra}{\longrightarrow}
\newcommand{\lla}{\longleftarrow}
\newcommand{\lr}[1]{\langle #1 \rangle}
\newcommand{\Gspectra}{\mbox{$G$-{\bf spectra}}}
\Mdef{\we}{\mathbf{we}}
\Mdef{\fib}{\mathbf{fib}}
\Mdef{\cof}{\mathbf{cof}}
\Mdef{\BI}{\mathcal{BI}}
\newcommand{\ilim}{\mathop{ \mathop{\mathrm{lim}} \limits_\leftarrow} \nolimits}
\newcommand{\colim}{\mathop{  \mathop{\mathrm {lim}} \limits_\rightarrow} \nolimits}
\newcommand{\holim}{\mathop{ \mathop{\mathrm {holim}} \limits_\leftarrow} \nolimits}
\newcommand{\wilim}{\mathop{ \mathop{\mathrm{lim}^w} \limits_\leftarrow} \nolimits}
\Mdef{\B}{\mathbb{B}}
\Mdef{\C}{\mathbb{C}}
\Mdef{\D}{\mathbb{D}}
\Mdef{\E}{\mathbb{E}}
\Mdef{\T}{\mathbb{T}}
\Mdef{\F}{\mathbb{F}}
\Mdef{\G}{\mathbb{G}}
\Mdef{\I}{\mathbb{I}}
\Mdef{\N}{\mathbb{N}}
\Mdef{\Q}{\mathbb{Q}}
\Mdef{\R}{\mathbb{R}}
\Mdef{\bbS}{\mathbb{S}}
\Mdef{\Z}{\mathbb{Z}}
\Mdef{\bA}{\mathbb{A}}
\Mdef{\bB}{\mathbb{B}}
\Mdef{\bC}{\mathbb{C}}
\Mdef{\bD}{\mathbb{D}}
\Mdef{\bE}{\mathbb{E}}
\Mdef{\bF}{\mathbb{F}}
\Mdef{\bG}{\mathbb{G}}
\Mdef{\bH}{\mathbb{H}}
\Mdef{\bI}{\mathbb{I}}
\Mdef{\bJ}{\mathbb{J}}
\Mdef{\bK}{\mathbb{K}}
\Mdef{\bL}{\mathbb{L}}
\Mdef{\bM}{\mathbb{M}}
\Mdef{\bN}{\mathbb{N}}
\Mdef{\bO}{\mathbb{O}}
\Mdef{\bP}{\mathbb{P}}
\Mdef{\bQ}{\mathbb{Q}}
\Mdef{\bR}{\mathbb{R}}
\Mdef{\bS}{\mathbb{S}}
\Mdef{\bT}{\mathbb{T}}
\Mdef{\bU}{\mathbb{U}}
\Mdef{\bV}{\mathbb{V}}
\Mdef{\bW}{\mathbb{W}}
\Mdef{\bX}{\mathbb{X}}
\Mdef{\bY}{\mathbb{Y}}
\Mdef{\bZ}{\mathbb{Z}}
\Mdef{\cA}{\mathcal{A}}
\Mdef{\cB}{\mathcal{B}}
\Mdef{\cC}{\mathcal{C}}
\Mdef{\mcD}{\mathcal{D}} 
\Mdef{\cE}{\mathcal{E}}
\Mdef{\cF}{\mathcal{F}}
\Mdef{\cG}{\mathcal{G}}
\Mdef{\mcH}{\mathcal{H}} 
\Mdef{\cI}{\mathcal{I}}
\Mdef{\cJ}{\mathcal{J}}
\Mdef{\cK}{\mathcal{K}}
\Mdef{\mcL}{\mathcal{L}}
\Mdef{\cM}{\mathcal{M}}
\Mdef{\cN}{\mathcal{N}}
\Mdef{\cO}{\mathcal{O}}
\Mdef{\cP}{\mathcal{P}}
\Mdef{\cQ}{\mathcal{Q}}
\Mdef{\mcR}{\mathcal{R}}
\Mdef{\cS}{\mathcal{S}}
\Mdef{\cT}{\mathcal{T}}
\Mdef{\cU}{\mathcal{U}}
\Mdef{\cV}{\mathcal{V}}
\Mdef{\cW}{\mathcal{W}}
\Mdef{\cX}{\mathcal{X}}
\Mdef{\cY}{\mathcal{Y}}
\Mdef{\cZ}{\mathcal{Z}}
\Mdef{\ca}{\mathcal{a}}
\Mdef{\ct}{\mathcal{t}}
\Mdef{\At}{\tilde{A}}
\Mdef{\Bt}{\tilde{B}}
\Mdef{\Ct}{\tilde{C}}
\Mdef{\Et}{\tilde{E}}
\Mdef{\Ht}{\tilde{H}}
\Mdef{\Kt}{\tilde{K}}
\Mdef{\Lt}{\tilde{L}}
\Mdef{\Mt}{\tilde{M}}
\Mdef{\Nt}{\tilde{N}}
\Mdef{\Pt}{\tilde{P}}
\Mdef{\tA}{\tilde{A}}
\Mdef{\tB}{\tilde{B}}
\Mdef{\tC}{\tilde{C}}
\Mdef{\tE}{\tilde{E}}
\Mdef{\tH}{\tilde{H}}
\Mdef{\tK}{\tilde{K}}
\Mdef{\tL}{\tilde{L}}
\Mdef{\tM}{\tilde{M}}
\Mdef{\tN}{\tilde{N}}
\Mdef{\tP}{\tilde{P}}
\Mdef{\ft}{\tilde{f}}
\Mdef{\xt}{\tilde{x}}
\Mdef{\yt}{\tilde{y}}
\Mdef{\Ab}{\overline{A}}
\Mdef{\Bb}{\overline{B}}
\Mdef{\Cb}{\overline{C}}
\Mdef{\Db}{\overline{D}}
\Mdef{\Eb}{\overline{E}}
\Mdef{\Fb}{\overline{F}}
\Mdef{\Gb}{\overline{G}}
\Mdef{\Hb}{\overline{H}}
\Mdef{\Ib}{\overline{I}}
\Mdef{\Jb}{\overline{J}}
\Mdef{\Kb}{\overline{K}}
\Mdef{\Lb}{\overline{L}}
\Mdef{\Mb}{\overline{M}}
\Mdef{\Nb}{\overline{N}}
\Mdef{\Ob}{\overline{O}}
\Mdef{\Pb}{\overline{P}}
\Mdef{\Qb}{\overline{Q}}
\Mdef{\Rb}{\overline{R}}
\Mdef{\Sb}{\overline{S}}
\Mdef{\Tb}{\overline{T}}
\Mdef{\Ub}{\overline{U}}
\Mdef{\Vb}{\overline{V}}
\Mdef{\Wb}{\overline{W}}
\Mdef{\Xb}{\overline{X}}
\Mdef{\Yb}{\overline{Y}}
\Mdef{\Zb}{\overline{Z}}
\Mdef{\db}{\overline{d}}
\Mdef{\hb}{\overline{h}}
\Mdef{\qb}{\overline{q}}
\Mdef{\rb}{\overline{r}}
\Mdef{\tb}{\overline{t}}
\Mdef{\ub}{\overline{u}}
\Mdef{\vb}{\overline{v}}
\Mdef{\hc}{\hat{c}}
\Mdef{\he}{\hat{e}}
\Mdef{\hf}{\hat{f}}
\Mdef{\hA}{\hat{A}}
\Mdef{\hH}{\hat{H}}
\Mdef{\hJ}{\hat{J}}
\Mdef{\hM}{\hat{M}}
\Mdef{\hP}{\hat{P}}
\Mdef{\hQ}{\hat{Q}}
\Mdef{\thetab}{\overline{\theta}}
\Mdef{\phib}{\overline{\phi}}
\Mdef{\uA}{\underline{A}}
\Mdef{\uB}{\underline{B}}
\Mdef{\uC}{\underline{C}}
\Mdef{\uD}{\underline{D}}
\Mdef{\bolda}{\mathbf{a}}
\Mdef{\boldb}{\mathbf{b}}
\Mdef{\bfD}{\mathbf{D}}
\Mdef{\fm}{\frak{m}}
\Mdef{\fp}{\frak{p}}
\newcommand{\fD}{\mathfrak{D}}
\newcommand{\fX}{\mathfrak{X}}
\newcommand{\fY}{\mathfrak{Y}}
\newcommand{\fZ}{\mathfrak{Z}}
\Mdef{\eps}{\epsilon}
\newcommand{\cell}{\mathrm{Cell}}
\newcommand{\sub}{\mathrm{Sub}}
\newcommand{\PP}{\mathbb{P}}
\newcommand{\cIi}{\cI^{-1}}
\newcommand{\Zt}{\tilde{\Z}}
\renewcommand{\tb}{\overline{\times}}
\newcommand{\full}{\mathrm{full}}
\newcommand{\Rt}{\tilde{R}}
    \newcommand{\cospan}{\lrcorner}
\newcommand{\modules}{\mbox{-mod}}
\newcommand{\Rcospan}{R^{\cospan}}
\newcommand{\piG}{\pi^G}
\newcommand{\shv}{\mathbf{Shv}}
\newcommand{\cWshv}{\cW\!-\!\mathbf{Shv}}
\newcommand{\Ubar}{\overline{U}}
\newcommand{\etale}{\'etale}
\newcommand{\Rtop}{R^{top}}
\newcommand{\Rttop}{\Rt^{top}}
\newcommand{\RfX}{R_{\fX}}
\newcommand{\RcWfX}{R_{\fX}[\cW]}
\newcommand{\cAhat}{\hat{\mathcal{A}}}
\newcommand{\cAhatshv}{\hat{\mathcal{A}}_{\shv}}
\newcommand{\Nhat}{\hat{N}}
\renewcommand{\height}{\mathrm{ht}}
\begin{document}
\title{Rational $G$-spectra over blocks with finite Weyl groups}
\author{J.P.C.Greenlees}
\address{Mathematics Institute, Zeeman Building, Coventry CV4, 7AL, UK}
\email{john.greenlees@warwick.ac.uk}

\date{}

\begin{abstract}
We show that for any clopen collection $\fX$ of subgroups of $G$ with
finite Weyl groups the category of  $G$-spectra with geometric
isotropy in $\fX$ is equivalent to the category of equivariant sheaves
over $\fX$. This gives an algebraic model of injective dimension at
most the rank of $G$.
\end{abstract}

\thanks{The author is grateful to D.Barnes for conversations related
  to this paper. The work is partially supported by EPSRC Grant
  EP/W036320/1. The author  would also  like to thank the Isaac Newton
  Institute for Mathematical Sciences, Cambridge, for support and
  hospitality during the programme Equivariant homotopy theory in
  context, where most  of  work on this paper was undertaken. This work was supported by EPSRC grant EP/Z000580/1.  }
\maketitle

\tableofcontents

\section{Overview}

\subsection{Introduction}
If we pick a set $\fX$ of conjugacy classes of (closed) subgroups of  a compact 
Lie group $G$ we can consider the category $\Gspectra\lr{\fX}$ of  $G$-spectra whose 
geometric isotropy is contained in $\fX$, and the present paper gives
a complete and calculable model for a broad class of examples where
the structure is especially simple. 

The space $\sub(G)/G$ of conjugacy classes of closed subgroups
of $G$ has two topologies of interest.
First of all, it has the h-topology, which is the quotient topology of the
Hausdorff metric topology on $\sub (G)$. This is a compact, Hausdorff,
totally disconnected space, shown in \cite{prismatic} to be of
Cantor-Bendixson rank at most equal to the rank of $G$. The space also
has the Zariski topology, whose closed sets are the h-closed sets which are also
closed under passage to cotoral subgroups\footnote{$L$ is {\em cotoral} in $K$
if $L$ is normal in $K$ with quotient being a torus. For conjugacy
classes $(L)$ is cotoral in $(K)$ if the relation holds for some
representative subgroups.}.

If we decompose the space $\sub(G)/G$ as a union of Zariski clopen
subsets
$$\sub(G)/G=\fX_1\amalg \cdots \amalg \fX_n,$$
then idempotents in the Burnside ring show that the category of
rational $G$-spectra decomposes into blocks
$$\Gspectra \simeq \Gspectra \lr{\fX_1}\times  \cdots \times \Gspectra
\lr{\fX_n}$$
where  $\Gspectra \lr{\fX}$ is the category of $G$-spectra with
geometric isotropy in $\fX$.

It is conjectured 
\cite{AGconj} that 
there is a small and calculable abelian model $\cA (G|\fX)$ so that the category DG-$\cA 
(G|\fX)$ of differential graded objects in $\cA (G|\fX)$ is a model 
for $\Gspectra\lr{\fX}$ in the sense that there is a Quillen equivalence 
$$\Gspectra\lr{\fX}\simeq DG-\cA (G|\fX).$$

In this paper we show the conjecture is correct for  blocks
$\Gspectra \lr{\fX}$  in the simplest possible case: the space
$\fX$ is open and closed in the Zariski topology with all subgroups $H\in \fX$ having finite Weyl
group.

All blocks of finite groups are all of this type, and
we can take $\fX=\sub(G)/G$. The simplest non-trivial
example is when $G=O(2)$ and $\fX=\{ (D_{2n})\st n\geq 1\}\cup \{
O(2)\}$ consists of the dihedral subgroups and the limit point
$O(2)$. Other 1-dimensional examples occur in \cite{gq1}, and we will
describe some more in Section
\ref{sec:examples}. 

\subsection{The algebraic model}
The algebraic model incorporates the monodromy action of the finite
Weyl groups $\cW_K=W_G(K)$ for $K\in \fX$, and in Part 2 we will
describe this additional data precisely. There are in fact three
different descriptions of the algebraic model.

\begin{thm}
  \label{thm:threemodels}
Suppose $\fX$ is a clopen subset $\fX$ of subgroups of $G$ with finite Weyl 
group. The  following three abelian categories are equivalent
\begin{itemize}
\item the category $\cW-\shv/\fX$ of $\cW$-equivariant sheaves of 
$\Q$-modules. 
\item the category of modules over the ring $\Gamma (\fX ;\Q[\cW])$
  of global sections of the sheaf of rings $\Q[\cW]$ with stalk $\cW_x$
  over $x$. 
\item the standard abelian category $\cA (\fX, \cW)$ (see Section 
  \ref{sec:componstone} below). 
\end{itemize}
 The injective dimension of the three abelian categories is equal to the 
Cantor-Bendixson rank of $\fX$. This dimension depends only on $\fX$; it is finite 
and at most the rank of the group $G$. 
\end{thm}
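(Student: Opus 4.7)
The plan is to prove the three-way equivalence first and then establish the injective dimension statement. The structural fact underlying everything is that $\fX$, being clopen in the totally disconnected space $\sub(G)/G$, is itself a Stone space, while $\Q$-linearity together with the finiteness of the Weyl groups $\cW_x$ makes Maschke's theorem available pointwise; all nontrivial homological complexity is therefore concentrated in the topology of $\fX$, controlled by its Cantor-Bendixson filtration.

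For the equivalence between $\cW$-equivariant sheaves and modules over $R = \Gamma(\fX; \Q[\cW])$, I would exploit the rich idempotent structure of $R$: every clopen $U \subseteq \fX$ gives rise to an idempotent $e_U \in R$, and since clopens form a basis for the topology, these idempotents see enough of the geometry. The global sections functor $\cF \mapsto \Gamma(\fX; \cF)$ has a quasi-inverse sending a module $M$ to the sheaf $U \mapsto e_U M$ on clopens. The main check is that stalks are recovered correctly: $M_x = \colim_{U \ni x} e_U M$ matches $\cF_x$ precisely because stalks in a Stone space can be computed from clopen neighbourhoods. The third equivalence, with the standard abelian category $\cA(\fX, \cW)$ defined in Section \ref{sec:componstone}, I would handle by constructing explicit functors in both directions: from a sheaf extract the CB-stratified data (restrictions to layers together with compatible nearby-stratum maps), and conversely reconstruct the sheaf by induction along the filtration.

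For the injective dimension I would induct on the Cantor-Bendixson rank $r(\fX)$. In the base case $r(\fX) = 0$ the space $\fX$ is discrete, and by compactness finite, so the category splits as a finite product of module categories over $\Q[\cW_x]$; each is semisimple by Maschke, giving injective dimension $0$. For the inductive step, the derived subspace $\fX' \subseteq \fX$ is closed with open complement the discrete set of isolated points, yielding a recollement of the sheaf categories. By induction the category on $\fX'$ has injective dimension $r(\fX) - 1$, while the discrete part has injective dimension $0$; the standard recollement inequality then gives the upper bound $r(\fX)$. Sharpness follows by exhibiting a nonzero $\Ext^{r(\fX)}$ class built as a telescoping sequence of extensions obtained by descending the CB chain $\fX \supsetneq \fX' \supsetneq \fX'' \supsetneq \cdots$ and pairing a skyscraper at a point of maximal depth against a suitable simple sheaf on an isolated stratum.

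The final statement that $\injdim \leq \rank(G)$ then follows directly from the bound on the CB rank of $\sub(G)/G$ cited from \cite{prismatic}, inherited by the clopen subspace $\fX$, and the fact that CB rank is intrinsic to the topological space makes the dimension depend only on $\fX$. The main obstacle I anticipate is the inductive recollement step: one must verify that extensions between classes on different CB strata are governed exactly by a single connecting map and are not inflated by the variation of $\cW_x$ across strata, and that sharpness is preserved as one descends — the telescoping $\Ext$ construction has to be carried out equivariantly for the sheaf of group algebras throughout, not just at a single stratum.
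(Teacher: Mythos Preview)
Your proposal is essentially correct, but it takes a different route from the paper's proof on both the equivalence and the injective dimension.

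For the equivalence of sheaves with $\Gamma(\fX;\Q[\cW])$-modules, you use the direct argument via idempotents $e_U$ for clopen $U$. The paper explicitly acknowledges this direct route exists (opening of Section~\ref{sec:shvadelic}) but deliberately avoids it: instead it builds a punctured $(r+1)$-cube of rings $R_A[\cW]$ (Section~\ref{sec:ringcubes}), lifts it to a cube of sheaves $\mcR_A[\cW]$ (Section~\ref{sec:shvcubes}), shows this cube is a homotopy pullback via softness of sheaves over Stone spaces, and then invokes the diagram-module machinery of \cite{diagrams,adelicm} to identify the three categories as cellularizations of the same diagram. The payoff of the paper's indirect approach is that the identical cubical skeleton is later realized in ring $G$-spectra (Section~\ref{sec:Rtop}), so the algebraic and topological sides are proved by parallel arguments; your direct proof would establish the present theorem more cheaply but would not set up that parallel.

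For the injective dimension you run a Cantor--Bendixson induction with a recollement inequality and construct a sharp $\Ext$ class by hand. The paper does neither: it cites \cite{BarnesSugrue} for the non-equivariant statement that $\injdim(\shv/\fX)$ equals the CB rank, and then reduces the equivariant case to this by an explicit Maschke averaging (the Proposition in Subsection~6.F), showing that any $\Q_\fX$-splitting in an injective resolution can be averaged to a $\Q_\fX[\cW]$-splitting. This averaging also dissolves the concern you raise at the end about variation of $\cW_x$ across strata: equivariance contributes nothing to the homological dimension.
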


This is a special case of Theorem \ref{thm:cWshvscatStone}, which
shows the corresponding fact for any Stone space with finite
Cantor-Bendixson rank. 

The main result shows that any of the three abelian categories of Theorem
\ref{thm:threemodels} provide an algebraic model of rational $G$-spectra
over $\fX$.

\begin{thm}
  Suppose $\fX$ is a clopen subset $\fX$ of the space of conjugacy
  classes of subgroups of $G$ with finite Weyl 
  group. 
  
   The category $\Gspectra\lr{\fX}$ of rational $G$-spectra with 
  geometric isotropy in $\fX$ is Quillen equivalent to DG objects in
  any one of the three equivalent abelian categories of Theorem
  \ref{thm:threemodels}. These are of injective dimension at most the
  rank of $G$.
\end{thm}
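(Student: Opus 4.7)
The plan is to establish the Quillen equivalence with DG modules over the ring $\Gamma(\fX;\Q[\cW])$, which is one of the three equivalent models in Theorem \ref{thm:threemodels}; the remaining two versions then follow formally, and the injective dimension bound is inherited directly.

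The local picture is the starting point. For each $K\in \fX$, the geometric fixed point functor $\Phi^K$ lands in rational $\cW_K$-equivariant spectra. Since every $K\in \fX$ has finite Weyl group, this target is (by classical rational equivariant theory) Quillen equivalent to DG-$\Q[\cW_K]$-modules. In this way $\Q[\cW_K]$ plays the role of the stalk at $K$.

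The global equivalence is then built via a Schwede--Shipley strategy. First I would exhibit a compact generating set for $\Gspectra\lr{\fX}$ consisting of the spectra $\Sigma^{\infty}(G/K)_+\wedge e_{\mathfrak{U}}$, where $K\in \mathfrak{U}\subseteq \fX$ ranges over pairs with $\mathfrak{U}$ Zariski clopen and $e_{\mathfrak{U}}$ the associated Burnside idempotent. These are compact because the Weyl groups are finite, and they generate because the clopen idempotents generate the Zariski topology on $\fX$ and because the $\Phi^K$ for $K\in \fX$ are jointly conservative. Next, compute the graded ring of self-maps of this collection: tom Dieck's splitting together with the vanishing of the local contributions of $K\notin \mathfrak{U}$ on $e_{\mathfrak{U}}$ identifies the graded endomorphism ring with $\Gamma(\fX;\Q[\cW])$, concentrated in degree zero.

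The final step is to upgrade this graded calculation to an identification of the full endomorphism ring spectrum with $\Gamma(\fX;\Q[\cW])$, i.e.\ to establish formality, and then to invoke Schwede--Shipley to obtain the desired Quillen equivalence with DG modules. The main obstacle is this formality statement. Stalkwise the argument is trivial, since $\Q[\cW_K]$ is semisimple and so all derived Exts of $\Q[\cW_K]$-modules vanish above degree zero; but assembling stalkwise formality into global formality requires controlling the sheaf-theoretic Postnikov tower of the endomorphism ring spectrum over the Stone space $\fX$, which can have Cantor--Bendixson rank up to the rank of $G$. The semisimplicity of every stalk is exactly the input needed to show that the potential obstruction classes — living in sheaf cohomology of $\fX$ with coefficients in Ext-sheaves of $\Q[\cW]$-modules — vanish identically, so the argument works uniformly in the rank of $\fX$.
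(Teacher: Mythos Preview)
Your route is genuinely different from the paper's, and it has two real gaps.

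\textbf{What the paper does.} The paper does not attempt a single global Morita step. Instead it builds a punctured $(r{+}1)$-cube of commutative ring $G$-spectra $R^{top}_A$ (one for each nonempty height-flag $A\subseteq [r]$) and shows $S^0_{\fX}$ is the homotopy pullback. This gives $\Gspectra\lr{\fX}\simeq \wilim_A R^{top}_A\modules$. Only then, \emph{vertex by vertex}, does it run Morita: for each $A$ there is a single compact generator (an $R^{top}_A$-module assembled from the $G/K_+$), whose endomorphism ring spectrum has $\pi^G_*$ equal to $R_A[\cW]$ concentrated in degree $0$; Shipley's theorem plus trivial formality then gives $R^{top}_A\modules\simeq R_A[\cW]\modules$. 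Reassembling over the cube yields the standard model $\cA(\fX,\cW)$. The cubical decomposition is doing the work you are trying to do with obstruction theory: it breaks the problem into pieces where formality is a tautology.

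\textbf{First gap: you never produce a ring.} Your generating set $\{\Sigma^\infty(G/K)_+\wedge e_{\mathfrak U}\}$ is an infinite collection, so what you get is an endomorphism \emph{category} (a spectral ringoid), not a ring. The identification ``the graded endomorphism ring is $\Gamma(\fX;\Q[\cW])$'' is not well-posed: at best you would need a Morita equivalence between modules over your spectral category and $\Gamma(\fX;\Q[\cW])$-modules, and that step is where all the sheaf-over-a-Stone-space content lives. You have not said how you would do it. (The paper's cube is precisely a mechanism for producing, at each vertex, a \emph{single} compact generator whose endomorphism ring is an honest ring.)

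\textbf{Second gap: the formality discussion is misplaced.} You already claim that the graded endomorphisms are concentrated in degree $0$. If that is true, formality is immediate: a ring spectrum (or spectral category) with homotopy concentrated in degree $0$ is an Eilenberg--MacLane object, and Shipley's theorem finishes. There is no Postnikov tower to climb and no obstruction classes to kill; the semisimplicity of $\Q[\cW_K]$ is irrelevant at this point. So either your degree-$0$ claim already contains the hard part (in which case you should say how you prove it---this is exactly what the paper's Proposition on $\pi^G_*(\cE_A)=R_A[\cW]$ does, and the argument uses the iterated product/localization structure of $R^{top}_A$ in an essential way), or you have not yet established it and your obstruction-theory sketch is not a substitute.

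In short: a direct global Morita argument might be made to work, but as written you have not exhibited a single compact generator, you have conflated an endomorphism category with a ring, and you have inflated a trivial formality step while leaving the substantive homotopy computation unaddressed.
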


We view the equivalence
$$\Gspectra\lr{\fX}\simeq D(\cW-\shv/\fX)$$
as the main result of the paper. The other two abelian models are
mentioned because the equivalence is proved using the
standard model $\cA (\fX, \cW)$, and it may be considered the module
description is more elementary.

\subsection{Ingredients}

In Part 1, our task is to understand the category of sheaves over
$\fX$. Of course $\fX$ is a very special sort of space: it is a Stone
space (compact, Hausdorff and totally disconnected) and of finite
Cantor-Bendixson rank (By \cite{prismatic} this applies when $\fX=\fX_G$ is
the space of conjugacy classes of subgroups of a compact Lie group
$G$). It is not surprising that sheaves over such a
space have a simple algebraic model, but the standard model appears to
offer a new point of view and connects with the adelic philosophy.

In the short Part 2, we then introduce the notion of component
structures, and observe that the work of Part 1 is compatible with the
additional group actions: the statement is not surprising, but there
is some delicacy in pinning down the details. The most significant
input is to describe generators for the abelian categories.

In Part 3, we show that the algebraic arguments of Part 1 lift to the
category of $G$-spectra, and that the ring $G$-spectra that occur are
formal in a strong sense. Calculations of homotopy together with
Shipley's Theorem show that there is a model in terms of diagrams of
simpler categories, and the identification of generators in the 
algebraic and topological cases establishes an equivalence with the specific 
 category $\cA (\fX, \cW)$.

\subsection{Relationship to other work}
We briefly explain how the present article fits into the general
programme to give an algebraic model for rational $G$-spectra. As 
described above, it is natural to work block by block. 

The first work on a Weyl-finite block is the case $G=O(2)$ in \cite{o2q} (at the
triangulated level) and \cite{BarnesO(2)} (at the level of model
structures). The case of 1-dimensional blocks with a single element at
height 1 is treated in \cite{gq1}. Isolated subgroups occur for finite
groups,  but next for $G=SO(3)$, which is treated in \cite{so3q} (at the
triangulated level) and \cite{KedziorekSO(3)} (at the level of model
structures). 

The complement to the behaviour of Weyl-finite blocks is that of
Noetherian blocks.  This is illustrated by work on tori
\cite{tnqcore}, and the general case will be treated in
\cite{AGnoeth}. 

To see how these two complentary behaviours occur, we may consider
$O(2)$ with one Weyl-finite block and one Noetherian block. One may 
consider the group $SU(3)$ \cite{t2wqalg, gq1, t2wqmixed, u2q, su3q}. 
This has 18 blocks: 6 are Noetherian,  9 are isolated, 2 are
Weyl-finite and 1 is mixed. 

Work is in progress to describe the mixed case in general.

\subsection{Conventions}
We will work throughout in the category of rational spectra, so all
homotopy groups and maps of spaces are rational vector spaces.

We will constantly consider diagrams indexed on the poset $C$ of
subsets of $[r]=\{0,1,2,\ldots, r\}$, which is an $(r+1)$-cube, and the
punctured cube $PC$ consisting of nonempty subsets of $[r]$. We
will be careful to distinguish between $C$-diagrams and $PC$-diagrams
without overburdening the notation. We are interested
in the limit of $PC$-diagrams: if $X$ is a $PC$-diagram we may extend
to a $C$-diagram by defining $X_{\emptyset}=\ilim_{A\in PC}X_A$,  
and we say the resulting $C$-diagram
$X$ is  a limit diagram.

\subsection{Contents}

Part 1 is focused on the abelian category of sheaves over a scattered Stone space.
Section \ref{sec:ringcubes} introduces the cube of localized product
rings.  Section \ref{sec:shvcubes} introduces the corresponding diagram in
sheaves; the constructions may seem more natural, and most of our
proofs take place in the world of sheaves. In Section
\ref{sec:shvadelic} we establish an adelic construction of the category of
sheaves. Finally in Section \ref{sec:dim1} we make the constructions and
associated algebra explicit in the 1-dimensional case. It is
instructive to see the delicacy of the situation even in this simple
case.

Part 2 introduces component structures and explains how the
constructions from  Part 1 can be upgraded to permit equivariance.
In Section \ref{sec:componstone} the categories of equivariant objects are defined, and in Section
\ref{sec:gens} continuity conditions are brought under sufficient
control to identify generators. 

Part 3 establishes that the category of $G$-spectra over $\fX$ is
equivalent to the abelian category introduced in Parts 1 and 2.
Section \ref{sec:Rtop} introduces the spectral lifting of
the algebraic rings. Section \ref{sec:topgens} explains how component
structures affect the constructions and completes the proof of the
main theorem. Finally Section \ref{sec:examples} describes a few
illustrative examples.

\part{Sheaves over scattered Stone spaces}

\section{The cube of rings}
\label{sec:ringcubes}
We suppose that $\fX$ is a compact, Hausdorff, totally disconnected 
topological space with  a finite filtration 
$$\emptyset=\fX_{-1}\subset \fX_0\subset \fX_1\subset \cdots \subset \fX_r=\fX$$
with each $\fX_i$ open and dense in $\fX$, and so that the pure strata 
$$\fD_i:= \fX_i\setminus \fX_{i-1}$$
are discrete. It follows that the subsets of $\fD_i$ that are closed
in $\fX$ are the finite subsets. 

The fundamental structure arising from this is  a punctured
$(r+1)$-cube of rings.

\begin{example}
In the motivating example, $\fX$ is  an open and closed subset of the space of
conjugacy classes of closed subgroups of a compact Lie group $G$. The
entire space of conjugacy classes of closed subgroups is
compact Hausdorff and totally disconnected. It is
shown in \cite{prismatic} that the space has the given structure where
$r$ is the rank of the group.
\end{example}

\begin{thm}
  \label{thm:shvscatStone}
Suppose $\fX$ is a Stone space of finite Cantor-Bendixson rank. 
The  following three abelian categories are equivalent 
\begin{itemize}
\item the category $\shv/\fX$ of  sheaves of 
$\Q$-modules. 
\item the category of modules over the ring $\Gamma (\fX )$
  of global sections of the constant sheaf 
\item the standard abelian category $\cA (\fX)$ (see Section 
  \ref{sec:componstone} below). 
\end{itemize}
 The injective dimension of the three abelian categories is equal to the 
Cantor-Bendixson rank of $\fX$. 
\end{thm}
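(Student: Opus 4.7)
The plan is to prove all three equivalences simultaneously by induction on the Cantor--Bendixson rank $r$ of $\fX$, using the stratification $\emptyset = \fX_{-1} \subset \fX_0 \subset \cdots \subset \fX_r = \fX$ and the cubical/adelic decomposition to be developed in Sections \ref{sec:shvcubes} and \ref{sec:shvadelic}.

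For the base case $r=0$, the space $\fX$ is finite and discrete, the ring of global sections $\Gamma(\fX)$ of the constant sheaf is $\Q^{\fX}$, and all three categories reduce to the finite product $\prod_{x\in \fX}\Q\text{-mod}$; they are manifestly equivalent, and of injective dimension $0$. For the inductive step I would use the decomposition $\fX = \fD_r \sqcup \fZ$, where $\fD_r = \fX \setminus \fX_{r-1}$ is the open discrete top stratum and $\fZ := \fX_{r-1}$ is the closed complement, itself a Stone space of Cantor--Bendixson rank $r-1$. The punctured cube of rings of Section \ref{sec:ringcubes} contains, as its final face, a pullback square identifying $\shv/\fX$ with sheaves on $\fD_r$ (just a product of $\Q$-vector spaces indexed by $\fD_r$) glued to sheaves on $\fZ$ via a comparison map that encodes how sections behave as one approaches the closed stratum. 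The adelic construction of Section \ref{sec:shvadelic} realises this assembly explicitly, and the standard model $\cA(\fX)$ is engineered to match on the nose; the inductive hypothesis on $\fZ$ then propagates all three equivalences up to $\fX$.

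For the comparison with $\Gamma(\fX)\text{-mod}$, the essential input is Stone duality: clopen $U \subseteq \fX$ correspond bijectively with idempotents $e_U \in \Gamma(\fX)$, so $\Gamma(\fX)$ has enough idempotents that sections of any sheaf $\cF$ over a clopen $U$ are recovered as $e_U \cdot \Gamma(\fX, \cF)$. The sheafification/global-sections adjunction $\shv/\fX \rightleftarrows \Gamma(\fX)\text{-mod}$ is then an equivalence because both unit and counit are detected on the clopen basis, and this detection is compatible with the inductive pullback description.

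The injective dimension statement then splits in two. The upper bound $\leq r$ follows by induction on the pullback decomposition: the category of sheaves on the discrete stratum $\fD_r$ is semisimple (injective dimension $0$), and assembling with the inductive injective dimension $\leq r-1$ on $\fZ$ through the comparison map raises the total by at most one. For sharpness, I would exhibit an explicit nonzero class in $\Ext^r$ by pairing a skyscraper sheaf at a point of CB-depth $r$ against the constant sheaf on a suitable clopen neighbourhood; iterating the connecting maps associated to the $r$-step filtration produces the required nonzero class. The main obstacle I anticipate is the reconstruction step: verifying that the cube/pullback assembly genuinely recovers all of $\shv/\fX$ rather than a proper subcategory, and that the three descriptions agree compatibly with the inductive structure on $\fZ$. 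The sharpness of the injective dimension lower bound requires a careful $\Ext$ calculation, but once the cubical model is established this reduces to an explicit computation in the standard model $\cA(\fX)$.
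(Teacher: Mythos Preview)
Your proposal contains a concrete orientation error that breaks the induction. In the paper's filtration each $\fX_i$ is \emph{open} and dense in $\fX$, so the top pure stratum $\fD_r=\fX\setminus\fX_{r-1}$ is \emph{closed} (a finite discrete set), while $\fX_{r-1}$ is open and dense. Your $\fZ:=\fX_{r-1}$ is therefore not compact, hence not a Stone space, and your inductive hypothesis cannot be invoked on it. The natural repair---peeling off the open bottom stratum $\fD_0=\fX_0$ and inducting on the closed complement $\fX\setminus\fX_0$, which \emph{is} a Stone space of rank $r-1$---is workable, but then the link to $\cA(\fX)$ (which is defined via the full $(r{+}1)$-cube of rings $R_A$, not via a single recollement step) requires an additional argument identifying the iterated recollement with the cubical model.

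The paper does not proceed by induction at all. It builds the entire punctured cube of sheaves $\mcR_A$ at once, checks stalkwise that the cube is a pullback (Lemma~\ref{lem:acycliccube}), uses softness of all sheaves of $\Q$-modules on a Stone space (Lemma~\ref{lem:soft}) to upgrade this to a homotopy pullback on global sections, and then invokes the diagram-category machinery of \cite{diagrams,adelicm} (cellularization and the Cellular Skeleton Theorem) to obtain the equivalences $\shv/\fX\simeq\Gamma(\Q_{\fX})\mbox{-mod}\simeq\cA(\fX)$ simultaneously. Your Stone-duality argument for $\shv/\fX\simeq\Gamma(\fX)\mbox{-mod}$ via idempotents is close to the paper's $e\dashv\Gamma$ adjunction in Section~\ref{sec:shvadelic}, so that part is sound. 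For the injective dimension the paper simply cites \cite{BarnesSugrue}; your inductive upper bound plus explicit $\Ext^r$ class is a reasonable alternative, but note that the upper bound step also relies on the recollement direction being correct.
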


\subsection{The splicing rings}
\label{subsec:splicingrings}
First, if we have an indexing set $U$ and rings $R_y$ for $y\in U$, 
we may consider a product  $\prod_{y\in U}R_y$. If $V\subseteq U$ we
write $e_U\in \prod_yR_y$ for the idempotent with support $U$.
Given a point $x\not \in U$ (for example a
limit point of $U$),  we may consider the multiplicatively closed set
$$\cI_{x|U}=\{ e_V\st V\subseteq U \mbox{ is cofinite }\},  $$
where $e_V$ is the idempotent supported on $V$.

We suppose given a nonempty subset  $A=\{ a_0, a_1, \ldots , a_s\}$ of the set
$[r]=\{0, 1, \cdots , r\}$ of heights, arranged in decreasing order
$a_0>a_1>\cdots > a_s$. To this we can associate the ring
$$R_A=\prod_{\height(H_0)=a_0} \cIi_{H_0|a_1}
\prod_{\height(H_1)=a_1} \cIi_{H_1|a_2} \cdots 
\prod_{\height(H_{s-1})=a_{s-1}}\cIi_{H_{s-1}|a_s}\prod_{\height(H_s)=a_s} \Q. $$

Thus an element of $R_A$ is represented by an equivalence class of functions $f: \fD_{a_0}\times \cdots
\times \fD_{a_s} \lra \Q$, where the equivalence relation is described
by the neighbourhoods of points in 
$$\fX_{a_0}\supseteq
\fX_{a_1}\supseteq \cdots \supseteq \fX_{a_s}.$$

In Section \ref{sec:shvcubes} we will give an interpretation in
terms of sheaves.

\subsection{The adelic diagram of rings}
  We now observe that the rings $R_A$ for $\emptyset\neq A\subseteq [r]$ assemble
  into a punctured $(r+1)$-cube. We need only observe that if
  $B\subseteq [r]$ is obtained by adding
  one height $b$ to $A$ there is a map $d^b: R_A\lra R_B$.

  \begin{example}
    If $r=1$ we obtain the diagram 
    $$\xymatrix{
      &R_1\dto \\
      R_0\rto & R_{10}.
      }$$

          If $r=2$ we obtain the diagram 
    $$\xymatrix{
      &R_1\ddto\rrto &&R_{21}\ddto  \\
      && R_{2}\urto \ddto \\
      &R_{10}\rrto &&R_{210}\\
      R_0\urto\rrto &&R_{20}\urto.
      }$$
    \end{example}

    To describe the maps $d^b$, we consider first the case that
    $b$ is less than all the elements of $A$ so that
    $B=\{ a_0>a_1> \cdots >a_s>b\}$ in that case,
    for each $H_s$ we use the localized diagonal map 
  $$M\lra \prod_{\height(H_b)=b}M\lra 
  \cIi_{H_s|b}\prod_{\height(H_b)=b}M $$
  with $M=\Q$.

Because the   construction of the rings $R_A$ is iterative, this case
gives all cases. If $B=\{ a_0>a_1>  \cdots >a_t>b>a_{t+1}>\cdots >a_s
\}$ we take
$A'=\{ a_0>\cdots >a_t\}$ and $A''=\{ a_{t+1}>\cdots >a_s\}$. We start
with the identity map  $R_{A''}\lra R_{A''}$ then apply the
above construction for 
$R_{A''}\lra R_{bA''}$ and then apply the same iterations on both
factors to give
$$d^b: R_A=R_{A'A''}\lra R_{A'bA''}=R_B. $$

Finally, we may complete the cube by defining
$$R_{\emptyset}=\Gamma (\fX ; \Q_{\fX})$$
to be the global sections of the constant sheaf. This maps to the
rings $R_a$ by evaluation of a section at the stalks.

\subsection{Vanishing of adelic cohomology}
The punctured cube gives the adelic chain complex \cite{adelicc}
$$0\lra C^0\lra C^1\lra \cdots \lra C^{r+1}\lra C^{r+2}=0$$
where (recalling that an $i$-simplex has $i+1$ vertices) we take 
$$C^i=\prod_{|A|=i+1} R_A$$
and where the differential $d: C^i\lra C^{i+1}$ is the sum of  components
$(-1)^{A|b}d^b: R_A\lra R_B$ where $A|b$ is the position of $b$ in $A$
(i.e. 0 if $b>a_0$, 1 if $a_0>b>a_1$ and so forth). The punctured cube
can be augmented to a cube by adding the ring of global sections
$\Gamma (\fX; \Q)$  at the initial point, and this corresponds to the augmented complex
$$0\lra \Gamma (\fX, \Q)\lra C^0\lra C^1\lra \cdots \lra C^{r+1}\lra
0. $$
The main  input to our argument is the following. 

\begin{lemma}
  \label{lem:ringcubeexact}
The augmented complex is exact.
\end{lemma}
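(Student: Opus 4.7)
The plan is to induct on the Cantor--Bendixson rank $r$ of $\fX$. The base case $r=0$ is immediate: $\fX=\fD_0$ is a finite discrete space (being closed and discrete in a compact space), and the augmented complex collapses to the canonical identification $\Gamma(\fX,\Q)=\prod_{x\in\fX}\Q=R_{\{0\}}$.

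For the inductive step $r\geq 1$, I would split the $(r+1)$-cube of rings according to whether the index $A\subseteq[r]$ contains the bottom height $0$. Writing $P^i=\prod_{A\subseteq\{1,\ldots,r\},\,|A|=i+1}R_A$ for the ``$0$-excluded'' part and $\tilde Q^i=\prod_{A'\subseteq\{1,\ldots,r\},\,|A'|=i}R_{A'\cup\{0\}}$ for the ``$0$-included'' part, the decomposition $C^i(\fX)=P^i\oplus\tilde Q^i$ arranges the adelic differential into that of a mapping cone, producing a short exact sequence of complexes
$$0\to\tilde Q^\bullet\to C^\bullet(\fX)\to P^\bullet\to 0.$$
The quotient $P^\bullet$ is precisely the unaugmented adelic complex of the closed subspace $\fY=\fX\setminus\fD_0$, a Stone space of Cantor--Bendixson rank $r-1$ under the restricted filtration, so by induction $H^0(P^\bullet)=\Gamma(\fY,\Q)$ and $H^i(P^\bullet)=0$ for $i\geq 1$.

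The main computation is $H^\bullet(\tilde Q^\bullet)$. A direct inspection of $\ker(\tilde Q^0\to\tilde Q^1)$ identifies $H^0(\tilde Q^\bullet)$ with the finitely supported functions $\bigoplus_{x\in\fD_0}\Q$: a kernel element is an element of $\prod_{\fD_0}\Q$ which vanishes modulo cofinite-near-$y$ subsets of $\fD_0$ for every $y\in\fY$, and compactness of $\fY$ forces its support to be a closed discrete subset of $\fD_0$, hence finite. For $i\geq 1$, vanishing $H^i(\tilde Q^\bullet)=0$ follows from a parallel induction: view $\tilde Q^\bullet$ as the adelic-type complex on $\fY$ with coefficients in the sheaf of local rings $L_y=\cIi_{y|0}\prod_{\fD_0}\Q$, so that the same splitting argument applies (or, more cleanly, strengthen the inductive hypothesis to allow such local coefficients). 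Feeding these inputs into the long exact sequence of the cone yields $H^i(C^\bullet(\fX))=0$ for $i\geq 1$ and an extension
$$0\to\bigoplus_{x\in\fD_0}\Q\to H^0(C^\bullet(\fX))\to\Gamma(\fY,\Q)\to 0$$
which matches the corresponding short exact sequence for $\Gamma(\fX,\Q)$ obtained by decomposing locally constant $\Q$-valued functions on $\fX$ into their finitely supported values on $\fD_0$ and their restriction to $\fY$; naturality of the augmentation then identifies $H^0(C^\bullet(\fX))$ with $\Gamma(\fX,\Q)$ and completes the induction.

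The main obstacle is the vanishing $H^{\geq 1}(\tilde Q^\bullet)=0$, since $\tilde Q^\bullet$ is not literally an adelic complex for constant coefficients and the induction has to be broadened to accommodate the non-constant local rings $L_y$. A cleaner alternative is to anticipate the sheaf-theoretic interpretation developed in the next section: realize the adelic cube as global sections of a cube of sheaves on $\fX$ whose associated complex is a resolution of $\Q_\fX$ by $\Gamma$-acyclic sheaves, so that exactness of the augmented complex reduces to the vanishing of higher constant-sheaf cohomology on a Stone space.
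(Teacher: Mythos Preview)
Your direct inductive approach is not how the paper proceeds, and you correctly anticipate this: the paper takes precisely the ``cleaner alternative'' you name at the end. It lifts each ring $R_A$ to a sheaf $\mcR_A$ on $\fX$, checks that the resulting cube of sheaves is exact \emph{stalkwise} (at $x\in\fD_a$ all terms with $a_0>a$ vanish and the remaining cube collapses to an isomorphism), and then invokes softness of sheaves of $\Q$-modules on a Stone space to conclude that global sections preserve exactness. The stalkwise verification is essentially a one-line observation, which is why the paper prefers it; indeed the paper remarks that a direct cochain argument is possible ``but it is harder to keep track of detailed arguments in higher rank.''

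Your induction is in the spirit of that harder direct argument. The mapping-cone decomposition $0\to\tilde Q^\bullet\to C^\bullet\to P^\bullet\to 0$ is correct, as is the identification of $P^\bullet$ with the adelic complex of $\fY=\fX\setminus\fD_0$ and the compactness argument for $H^0(\tilde Q^\bullet)\cong\bigoplus_{\fD_0}\Q$. The genuine gap is exactly where you locate it: to get $H^{\geq 1}(\tilde Q^\bullet)=0$ you must rerun the whole argument on $\fY$ with the non-constant coefficient system $y\mapsto\cIi_{y|0}\prod_{\fD_0}\Q$, which forces you to strengthen the inductive statement to allow such coefficients from the outset. That is doable but adds real bookkeeping, and once you allow sheaf coefficients you are effectively redoing the sheaf proof by hand. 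The paper's route buys you the vanishing for free via softness, at the cost of setting up the sheaves $\mcR_A$; your route stays ring-theoretic but must carry the coefficient generalisation through every step of the induction.
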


In principle one could give a direct proof of this using cochains. The result
is immediate in rank 1, but it is harder to keep track of detailed
arguments in higher rank. We will give a more digestible argument
using sheaves in Section \ref{sec:shvcubes} below (Corollary \ref{cor:hopullcube}).

\section{The cube of sheaves}
\label{sec:shvcubes}

We show that the cube of rings from Section \ref{sec:ringcubes} can be
lifted to sheaves, and thereby prove the exactness statement.

\subsection{Restriction and extension}
We write $\Q_U$ for the constant sheaf on $U$. If $U$ is discrete we
may identify it with its ring of sections $\Gamma (\Q_U)=\prod_{u\in
  U}\Q$. We will often consider an element of this product as a
function $f: U\lra \Q$.

We will need to consider $U$ in a larger space $\Ubar$ in
which $U$ is dense and open, and we write $i: U\lra \Ubar$ for the
inclusion. We have the restriction functor $i^*: \shv/\Ubar\lra
\shv/U$. This is also the restriction in  terms of \etale\ spaces, 
in the sense that $i^*\cF$ has the same stalks as $\cF$ on $U$.
This functor has a left adjoint $i_*: \shv/U \lra \shv/\Ubar$. The
sheaf $i_*\cF$ has the same stalks over $U$ as $\cF$ does, but over
$x\in \Ubar$ it has the stalk
$$(i_*\cF)_x=\colim_{A\ni x}\cF(A\cap U). $$

In particular if $U$ is discrete and  $\Q_U$ is the constant sheaf over $U$ we have
$$(i_*\Q_U )_x=\colim_{A\ni x}\prod_{A\cap U}\Q,   $$
where the limit is over open sets $A$.
It is convenient to express this as a localization, so we take the
ring $\Gamma \Q_U=\prod_{u\in U}\Q$ and the multiplicatively closed
set
$$\cI_{x|U}=\{ e_{V\cap U}\st V \mbox{ is an clopen neighbourhood of }
x\}.$$
Thus
$$(i_*\Q_U )_x=\colim_{A\ni x}\prod_{A\cap U}\Q=\cIi_{x|U}\prod_{u\in
  U}\Q,  $$
just as in Subsection \ref{subsec:splicingrings}. 
In other words, an element is an equivalence class of functions $f:
U\lra \Q$ where $f\sim f'$ if the two functions agree in a
neighbourhood of $x$.

\subsection{Lifting the rings to sheaves}
\label{subsec:lifttoshv}

We recall that the space $\fX_s$ is open in $\fX$, and we name
the inclusions
$$\xymatrix{
\fX_s\rto^{i_s}&\fX&\lto_{j_s}\fX\setminus \fX_s.
}$$

\begin{defn}
Given  $A=\{a_0>a_1>\cdots 
  >a_s\} \subseteq [r]$, we may  construct a sheaf $\mcR_A$ on $\fX$
  as follows. First, if $A=\emptyset$ we take
  $\mcR_{\emptyset}=\Q_{\fX}$. Now, if $s=0$ we  start with  the sheaf 
  $\Q_{a_0}$ on $\fD_{a_0}$ a¯nd extend it to $\fX_{a_0}$; this 
  does not change any stalks since $\fD_{a_0}$ is closed in 
  $\fX_{a_0}$, so we continue to write $\Q_{a_0}$.

    Now if 
  $d_0A=\{a_1>\cdots >a_s\}$ and $\mcR_{d_0A}$ is defined 
  we take 
  $$\mcR_A=i^{a_0}_*i_{a_0}^*\mcR_{d_0A}. $$
  Since $i^{a_0}_*$ is a left adjoint, there is a unit map 
  $$\mcR_{d_0A}\lra i^{a_0}_*i_{a_0}^*\mcR_{d_0A} =\mcR_A.  $$

Displaying all the maps,   we have 
  $$\xymatrix{
                &\shv/\fX\drto^{i^*_{a_1}}&&\cdots&\cdots\drto^{i^*_{a_s}}&&\shv/\fX\\
   \shv/\fX_{a_0}\urto^{i^{a_0}_*} &&\shv/\fX_{a_1}\urto^{i^{a_1}_*} &&&\shv/\fX_{a_s}\urto^{i^{a_s}_*}&}$$
and 
  $$\mcR_A=i^{a_0}_*i_{a_0}^*\cdots i_{a_{s-1}}^*i^{a_s}_*\Q_{a_s}. $$
\end{defn}¯

The two constructions are related by taking global sections:
\begin{lemma}
\label{lem:GammaRAisRA}
  The sheaf and ring constructions are related by taking global sections
   $$\Gamma (\mcR_A)=R_A, $$
   and this also applies to the maps in the cube. 
 \end{lemma}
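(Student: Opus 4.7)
The plan is to proceed by induction on $|A|$, simultaneously identifying the global sections of $\mcR_A$ with $R_A$ and keeping enough control on the stalks of $\mcR_A$ along the discrete stratum $\fD_{a_0}$ that the next inductive step can consume them.

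The base case $A = \emptyset$ is true by definition, since $\mcR_\emptyset = \Q_\fX$ and $R_\emptyset = \Gamma(\fX;\Q_\fX)$. For $A = \{a_0\}$, the adjunction along the open inclusion $i^{a_0}$ gives $\Gamma(\fX;\mcR_{\{a_0\}}) = \Gamma(\fX_{a_0};\Q_{a_0})$; since $\Q_{a_0}$ is extended from the closed discrete subset $\fD_{a_0}\subset\fX_{a_0}$, this is $\prod_{H_0\in\fD_{a_0}}\Q = R_{\{a_0\}}$.

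For the inductive step with $A = \{a_0>a_1>\cdots>a_s\}$, I would apply the adjunction to $\mcR_A = i^{a_0}_*i_{a_0}^*\mcR_{d_0 A}$ to reduce to
\[
\Gamma(\fX;\mcR_A) \;=\; \Gamma\bigl(\fX_{a_0};\, i_{a_0}^*\mcR_{d_0 A}\bigr),
\]
and then split this along the discrete stratum $\fD_{a_0}$ so that sections decompose as a product over $H_0\in\fD_{a_0}$ of stalk data, namely $(\mcR_{d_0 A})_{H_0} = \colim_{V\ni H_0}\mcR_{d_0 A}(V)$. The content of the inductive step is that this colimit unpacks to the expected localization: by the inductive description of $\mcR_{d_0 A}$, sections over a clopen $V$ are a product over $H_1\in V\cap\fD_{a_1}$ of the corresponding iterated structure, and as $V$ shrinks to $H_0$ the indexing sets $V\cap\fD_{a_1}$ run through a cofinal system of cofinite subsets of $\fD_{a_1}$. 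This is exactly the cofinal system defining the multiplicatively closed set $\cI_{H_0|a_1}$ from Subsection~\ref{subsec:splicingrings}, so the colimit is $\cIi_{H_0|a_1}\prod_{H_1\in\fD_{a_1}}(\cdots)$. Assembling over all $H_0$ recovers the nested-product-of-localizations formula for $R_A$.

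The final clause---that this identification also applies to the maps in the cube---follows from naturality. Each coface $d^b\colon R_A\to R_B$ with $B = A\cup\{b\}$ is the image under $\Gamma$ of a sheaf map $\mcR_A\to\mcR_B$ coming from the unit of the $(i_b^*,i^b_*)$ adjunction inserted at the new height $b$; since both $\Gamma$ and the stalk identifications above are natural in the indexing poset, the two cubes are identified on the nose.

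The main obstacle is the careful bookkeeping in the stalk computation: one must verify that the iterated application of $i^{a_j}_*i_{a_j}^*$ really produces the nested product of localizations and not some coarser quotient, which rests on checking that at each height the clopen neighborhood basis of $H_j$ cuts out exactly the cofinite system in $\fD_{a_{j+1}}$ used to define $\cI_{H_j|a_{j+1}}$.
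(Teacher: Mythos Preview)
Your proposal is correct and follows essentially the same route as the paper: induction on $|A|$, using the adjunction $\Gamma(\fX,i^{a_0}_*\cF)=\Gamma(\fX_{a_0},\cF)$ to strip off the outermost pushforward, then decomposing sections over the closed discrete stratum $\fD_{a_0}$ as a product of stalks, each of which is the desired localization by the inductive hypothesis. Your write-up is in fact more detailed than the paper's---you spell out the cofinality argument identifying the stalk colimit with $\cIi_{H_0|a_1}$, and you address the naturality of the cube maps explicitly, which the paper's proof leaves implicit.
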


 \begin{proof} If $A=\emptyset$ this is true by definition. Otherwise
   we proceed as follows.  It is clear if $A=a_s$ by construction. 
  We then repeatedly use two facts.

   Since $i^a_*$ is left adjoint to restriction we have 
   $$\Gamma (\fX, i^a_*\cF)=\Gamma(\fX_a, \cF)$$
   for sheaves $\cF$ on $\fX_a$. 

   On the other hand if $\cG$ is supported on $\fD_a$ we have 
   $$\Gamma(\fX_a, \cG)=\prod_{x_a\in \fD_a}\cG_{x_a}. $$

   Thus 
   $$\Gamma(\fX, \mcR_A)=\Gamma(\fX, i^{a_0}_*j^*_{a_0}\mcR_{d_0A})=
   \Gamma(\fX_{a_0}, 
   j^*_{a_0}\mcR_{d_0A})=\prod_{x_0}\cIi_{x_0|a_1}\prod_{x_1}
   (\mcR_{d_0A})_{x_1}$$
      \end{proof}

   \begin{lemma}
     \label{lem:soft}
All sheaves of $\Q$-modules on $\fX$ are soft and hence acyclic.
 \end{lemma}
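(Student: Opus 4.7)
The plan is to exploit the fact that a Stone space has a basis of clopen sets, so that closed sets admit clopen neighbourhoods. This reduces softness to a trivial gluing argument, after which the standard theorem that soft sheaves on paracompact Hausdorff spaces are $\Gamma$-acyclic finishes the proof.

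In detail, let $\cF$ be a sheaf of $\Q$-modules on $\fX$, let $Z\subseteq \fX$ be closed, and let $s\in \cF(Z):=\colim_{U\supseteq Z}\cF(U)$ be a section over $Z$. By definition $s$ is represented by an actual section $\tilde s\in \cF(U)$ for some open $U$ containing $Z$. Since $\fX$ is compact Hausdorff and totally disconnected, $Z$ is compact and admits a basis of clopen neighbourhoods; covering $Z$ by clopen neighbourhoods contained in $U$ and extracting a finite subcover whose union $V$ is again clopen, we obtain $Z\subseteq V\subseteq U$ with $V$ clopen. Then $\{V,\fX\setminus V\}$ is a disjoint clopen cover of $\fX$, so $\cF(\fX)=\cF(V)\oplus\cF(\fX\setminus V)$, and the pair $(\tilde s|_V,0)$ defines a global section restricting to $s$ on $Z$. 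Hence $\cF$ is soft.

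For acyclicity, $\fX$ is compact Hausdorff, hence paracompact, and it is a classical theorem (Godement, Bredon) that on a paracompact Hausdorff space soft sheaves of abelian groups are acyclic for the global sections functor; applying this to $\cF$ and to each term of a soft resolution gives $H^i(\fX;\cF)=0$ for $i>0$.

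The only subtlety is the first step; once one knows that $Z$ has a clopen neighbourhood inside any given open neighbourhood, softness is essentially automatic because one can split the global section as a pair on the two clopen pieces using the zero element of the $\Q$-module structure. This is also why the hypothesis ``sheaf of $\Q$-modules'' (or any sheaf of abelian groups) is needed: without a zero section the ``extend by $0$'' step would fail.
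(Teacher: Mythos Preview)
Your proof is correct and follows essentially the same approach as the paper: both exploit that in a Stone space any closed set has arbitrarily small clopen neighbourhoods, use the splitting $\cF(\fX)=\cF(V)\oplus\cF(V^c)$ for clopen $V$ to extend sections by zero, and then invoke the standard theorem that soft sheaves on paracompact Hausdorff spaces are acyclic. The paper phrases the softness argument via writing the closed set as an intersection of clopen sets and passing to the colimit of the resulting surjections, whereas you pick a representative on an open $U$ and shrink once to a clopen $V\subseteq U$; these are minor organisational variants of the same idea.
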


 \begin{proof}
Softness is proved in the paper \cite{Wiegand} (where `Stone space' is
used in a more permissive sense). Indeed any closed set is an
intersection of a decreasing sequence of clopen sets.  If $U$ is
clopen we have $\cF(\fX)=\cF(U)\oplus \cF(U^c)$, so sections over
$\cF$ always extend. If $K=\bigcap_i U_i$ then
$$\cF (K)=\colim_i \cF(U_i)$$
is a colimit of a sequence of epimorphisms and hence $\cF(\fX)\lra
\cF(K)$ is an epimorphism. 

By \cite[II.9.11]{BredonSheaf}, since the family of all closed subsets of 
$\fX$ is paracompactifying,  soft sheaves are acyclic. 
\end{proof}

 \begin{remark}
(i) The constant sheaf is usually not flabby (the set $\fX_0$ is discrete
and open, so sections over it are uncountable, whereas sections over
$\fX$ are countable).

(ii) By contrast to the acyclicity of all sheaves, the injective dimension is equal to the Cantor-Bendixson rank
\cite{BarnesSugrue}. 
 \end{remark}  

 \begin{lemma}
   \label{lem:acycliccube}
     The cube of sheaves $\mcR_A$ is acyclic, or equivalently
     $$\Q_{\fX}\simeq \holim_{A\neq \emptyset}\mcR_A.$$
   \end{lemma}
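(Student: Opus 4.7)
The strategy is to translate the homotopy-limit assertion into the exactness of an augmented complex of sheaves and verify that exactness at stalks. A cube of sheaves is a homotopy limit cube if and only if its associated augmented total cochain complex is acyclic, so the statement $\Q_\fX\simeq \holim_{A\neq\emptyset}\mcR_A$ is equivalent to the exactness of the augmented adelic complex
\[
0\to \Q_\fX\to \bigoplus_{|A|=1}\mcR_A \to \bigoplus_{|A|=2}\mcR_A \to \cdots \to \mcR_{[r]}\to 0
\]
from Section \ref{sec:ringcubes}, viewed as a complex of sheaves on $\fX$. Exactness is a local condition, so it can be checked stalkwise.

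Fix $x\in \fX$ of height $h=h(x)$. The iterative formula $\mcR_A=i^{a_0}_*i^*_{a_0}\mcR_{d_0 A}$ gives a recursive description of each stalk $(\mcR_A)_x$: these stalks vanish unless the positions of the $a_i$'s are compatible with $h$, and the non-vanishing ones, together with the induced differentials, assemble into the augmented reduced cochain complex of a (contractible) simplex indexed by the compatible subsets. Since the simplex is contractible, this stalk complex is acyclic at every $x$, which is exactly what is required.

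As an alternative organisation, one could induct on the rank $r$. The base $r=0$ is trivial since $\fX=\fD_0$ is discrete and the punctured cube reduces to the single vertex $\mcR_{\{0\}}=\Q_\fX$. For the inductive step, the open-closed decomposition $\fX=\fX_{r-1}\cup\fD_r$ splits the cube into the sub-cube indexed by $A\not\ni r$ (handled by induction applied to $\fX_{r-1}$) and the sub-cube indexed by $A\ni r$ (which is controlled by pushforward from the closed discrete stratum $\fD_r$); these pieces glue via the standard recollement short exact sequence.

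The main obstacle is the combinatorial bookkeeping for the stalkwise analysis: one must verify that the signs $(-1)^{A|b}$ from the adelic differential combine correctly with the unit and localised-diagonal maps of the iterative construction of $\mcR_A$ to reproduce the simplicial cochain differential at each stalk. Once this identification is made, acyclicity follows automatically from the contractibility of the simplex.
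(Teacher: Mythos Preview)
Your overall strategy matches the paper's: reduce to stalkwise exactness of the augmented adelic complex of sheaves. However, your description of the stalk complex at $x\in\fD_h$ as ``the augmented reduced cochain complex of a (contractible) simplex'' is not accurate, and this is exactly where the content of the argument lies. The non-vanishing stalks $(\mcR_A)_x$ are indeed indexed by nonempty $A\subseteq[h]$, but they are \emph{not} all copies of $\Q$: for instance $(\mcR_{\{0\}})_x$ at a height-$1$ point is the large ring $\cI^{-1}_{x|0}\prod_{\fD_0}\Q$. So the stalk complex is not the simplicial cochain complex of $\Delta^h$ with constant coefficients, and your proposed identification does not go through as stated.

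The paper's argument is sharper and, crucially, bypasses entirely the sign bookkeeping you flag as the main obstacle. At $x\in\fD_a$ it makes two precise observations:
\begin{itemize}
\item[(i)] $(\mcR_A)_x=0$ whenever $a_0>a$, so only $A\subseteq[a]$ contribute;
\item[(ii)] for such $A$ with $a\notin A$, the structure map $(\mcR_A)_x\to(\mcR_{A\cup\{a\}})_x$ is an \emph{isomorphism}, because applying $i^{a}_*i^*_{a}$ does not change stalks at points of $\fD_a\subseteq\fX_a$.
\end{itemize}
Thus the surviving $(a+1)$-cube has every map in the $a$-direction an isomorphism, so its augmented total complex is the cone on an isomorphism and hence acyclic. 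Equivalently, the entire limit collapses to the single edge $(\Q_\fX)_x\to(\mcR_{\{a\}})_x$, which is $\Q\to\Q$. No sign analysis is needed.

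One further omission: the paper first invokes Lemma~\ref{lem:soft} (all sheaves on $\fX$ are soft, hence acyclic) to justify that exactness of the cube of sheaves is equivalent to the homotopy-limit statement; you should make this step explicit rather than passing silently from ``stalkwise exact'' to ``$\holim$''.
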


   \begin{proof}
Since all the terms are acyclic by  Lemma
\ref{lem:soft}, we need only verify the cube of sheaves is exact, and
this can be done stalkwise. If $x\in \fD_a$ then
$(\mcR_A)_x=0$ if $a_0>a$, so we only need to consider $\mcR_A$ for
$A\subseteq [a]$. Furthermore, if $a\not \in A$ the map
$$(\mcR_A)_x\stackrel{\cong}\lra (\mcR_{Aa})_x. $$
This means that the entire limit is from $\emptyset \lra A=\{a\}$, where we evidently
have the isomorphism
$$\Q =(\Q_{\fX})_x\lra (R_{\{a\}})_x=(\Q_a)_x=\Q.$$
     \end{proof}
   
     \begin{cor}
       \label{cor:hopullcube}
            The homotopy limit of the punctured cube $R_{\bullet}$ of
            rings is its limit
            $$\Gamma (\Q_{\fX})\simeq \holim_{A\neq \emptyset}
            R_A,  $$
            the ring of global sections of the constant sheaf. 
\end{cor}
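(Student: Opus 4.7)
The strategy is to obtain the corollary by applying derived global sections $R\Gamma$ to the sheaf-level equivalence of Lemma \ref{lem:acycliccube}. All the real content has been packaged into the earlier lemmas, so the proof should amount to a formal transfer of information from sheaves to rings.

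Concretely, the first step is to invoke the general fact that $R\Gamma$ commutes with homotopy limits in the derived category (it is the right derived functor of a right adjoint), so applying it to $\Q_\fX \simeq \holim_{A\neq \emptyset} \mcR_A$ yields
$$R\Gamma(\Q_\fX) \simeq \holim_{A\neq \emptyset} R\Gamma(\mcR_A).$$
Next, Lemma \ref{lem:soft} tells us that every sheaf of $\Q$-modules on $\fX$ is $\Gamma$-acyclic, so the derived and ordinary global sections agree on each $\mcR_A$ and on $\Q_\fX$. Finally, Lemma \ref{lem:GammaRAisRA} identifies $\Gamma(\mcR_A) = R_A$ in a way compatible with the cube structure maps. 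Stringing these three equivalences together gives
$$\Gamma(\Q_\fX) \simeq \holim_{A\neq \emptyset} R_A,$$
which is one half of the claim.

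To see that this homotopy limit coincides with the ordinary limit (and thereby recover Lemma \ref{lem:ringcubeexact}), observe that both sides are ordinary abelian groups concentrated in degree zero. Using the cochain-complex model of $\holim$ from Section \ref{sec:ringcubes}, the displayed equivalence says precisely that the augmented complex with $\Gamma(\Q_\fX)$ in degree $-1$ and $C^i = \prod_{|A|=i+1} R_A$ in degree $i\geq 0$ has vanishing cohomology in every degree; in particular the ordinary limit agrees with the value above. I expect no serious obstacle: the substantive work is already encapsulated in Lemma \ref{lem:soft} (softness, hence acyclicity of arbitrary sheaves, is what collapses $R\Gamma$ to $\Gamma$ termwise) and Lemma \ref{lem:acycliccube} (the stalkwise collapse of the sheaf cube), and what remains is purely formal homological bookkeeping. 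If anything is delicate it is making sure the identification of $\Gamma \mcR_A$ with $R_A$ from Lemma \ref{lem:GammaRAisRA} is natural enough in $A$ to translate the whole cube at once, but this is clear from the inductive construction of the $\mcR_A$ via the adjoint pairs $(i_{a}^*, i^a_*)$.
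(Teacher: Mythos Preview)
Your argument is correct and follows essentially the same route as the paper: apply derived global sections to the sheaf-level equivalence of Lemma~\ref{lem:acycliccube}, use softness (Lemma~\ref{lem:soft}) to replace $R\Gamma$ by $\Gamma$ termwise, and invoke Lemma~\ref{lem:GammaRAisRA} to identify the vertices with the $R_A$. The only cosmetic difference is that the paper phrases the first step as a spectral sequence that collapses, whereas you phrase it as $R\Gamma$ commuting with homotopy limits; these are equivalent packagings of the same fact.
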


\begin{proof}
Taking global sections, we have a spectral sequence for  calculating
the cohomology of the homotopy pullback sheaf. Since the sheaves
$\Q_{\fX}$ and $\mcR_A$ are soft, the spectral sequence
collapses to the complex of global sections. Since $H^0(\fX,
\mcR_A)=R_A$ this is the augmented adelic complex. Since the 
constant sheaf is soft, the complex is exact. 
  \end{proof}

  \begin{warning}
    \label{warn:RARA}
The categories of modules over the sheaf $\mcR_A$ and the $\Q$-algebra 
$R_A$ will generally not be equivalent. For example $\mcR_0$ is the constant sheaf over the
discrete space $\fD_0$ so its module category is the product
$\prod_{d\in \fD_0}(\Q\modules)$. On the other hand the category of
modules over $R_0=\prod_{d\in
  \fD_0}\Q$ is much larger (for example it includes the module $\prod
\Q/\bigoplus\Q$, which has trivial stalks). 
    \end{warning}

       \section{Adelic description of sheaves}
\label{sec:shvadelic}
For scattered Stone spaces $\fX$, one may show directly that the
abelian category of sheaves over $\fX$ is  equivalent to the category 
of modules over the ring of global sections $\Gamma (\Q_{\fX})$. This
just amounts to saying the reconstruction of a sheaf from an \etale\
space only involves constructions that can be expressed using idempotents in the ring. 

The present section gives an alternative and less direct approach that 
parallels the construction of the algebraic model of $G$-spectra over
$\fX$ in Part 3. 
In Sections \ref{sec:ringcubes} and \ref{sec:shvcubes} we defined
punctured cubes of rings, one in 
the category of vector spaces and one in the category of sheaves over 
$\fX$. The first gives a model for modules over $\Gamma (\Q_{\fX})$ in 
terms of modules over diagrams of simpler rings. The second
gives a model for sheaves over $\fX$ in terms of modules over diagrams
of sheaves of rings. These two diagram categories are different but equivalent.

\subsection{The category $\cA (\fX)$}
In Section \ref{sec:ringcubes} we introduced the punctured cubical diagram $\{ R_{A}\}_{A\neq 
  \emptyset}$ of rings, and Corollary \ref{cor:hopullcube} shows that we have
$$\Gamma (\Q_{\fX})=\ilim_A R_A\simeq \holim_A R_A.$$

We now form a related  abelian category,  $\cA(\fX)$ that will be used
in the comparison. We follow  \cite{s1q} et seq for the terminology.
\begin{defn}
The {\em standard model} on $\fX$ is the
category $\cA (\fX)$ of cocartesian $R_{\bullet}$-modules.

In more detail, an $R_{\bullet}$-module is a punctured cubical diagram $M_{\bullet}$
where $M_A$ is an $R_A$-module and if $A\subseteq B$ the map
$M_A\lra M_B$ is a map of modules over $R_A\lra R_B$. The morphisms 
$M_{\bullet}\lra N_{\bullet}$ are given by maps of diagrams. A
{\em cocartesian} module is one in which the maps $M_A\lra M_B$ induce an
isomorphism $R_B\tensor_{R_A} M_A\cong M_B$.

\end{defn}

\begin{lemma}
  \label{lem:cAfXcoref}
The inclusion $i: \cA (\fX)\lra R_{\bullet}\mbox{-mod}$ is coreflective:
it has a right adjoint $\Gamma$ so that the unit $T\lra \Gamma i T$ is
an isomorphism. 
\end{lemma}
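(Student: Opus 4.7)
The goal is to exhibit a right adjoint $\Gamma$ to the inclusion $i$; the unit $T \to \Gamma i T$ will then be an isomorphism automatically, since a fully faithful left adjoint has invertible unit and $i$ is the inclusion of a full subcategory.

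\textbf{Abstract existence.} Colimits in $R_{\bullet}$-mod are computed pointwise, and base change $-\otimes_{R_A} R_B$ commutes with colimits; hence the cocartesian condition $R_B \otimes_{R_A} M_A \cong M_B$ is preserved under arbitrary colimits. Consequently $\cA(\fX)$ is closed under colimits in $R_{\bullet}$-mod, so $i$ is colimit-preserving. Both categories are Grothendieck abelian (hence locally presentable), so the adjoint functor theorem yields a right adjoint $\Gamma$.

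\textbf{Explicit formula.} For $N \in R_{\bullet}$-mod, set $N^{\flat} := \ilim_{B \in PC} N_B$, regarded as an $R_{\emptyset}$-module via the identification $R_{\emptyset} \cong \ilim_B R_B$ from Corollary \ref{cor:hopullcube}. Define
\[ \Gamma(N)_A \ := \ R_A \otimes_{R_{\emptyset}} N^{\flat}, \]
with structure maps induced by $R_A \to R_B$; this is cocartesian by transitivity of base change. The counit $\epsilon\colon \Gamma(N)\to N$ is the $R_A$-linear extension of the limit projection $N^{\flat}\to N_A$. Given $T \in \cA(\fX)$ and $f\colon iT \to N$, the induced map $\ilim f\colon T^{\flat}\to N^{\flat}$ of $R_{\emptyset}$-modules base-changes to the unique factorisation $T \to \Gamma(N)$ through $\epsilon$, giving the adjunction bijection.

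\textbf{Main obstacle.} The delicate input is the descent identification $T^{\flat} \cong T_{\emptyset}$ for cocartesian $T$, where $T_{\emptyset}$ is the underlying $R_{\emptyset}$-module with $T_B = R_B \otimes_{R_{\emptyset}} T_{\emptyset}$. This extends Corollary \ref{cor:hopullcube} from $\Q_{\fX}$ to general $R_{\emptyset}$-modules, and amounts to keeping the augmented adelic complex exact after tensoring with $T_{\emptyset}$. Working rationally (so that the localisation-type maps $R_{\emptyset} \to R_A$ are flat) this is manageable, and once established the two constructions of $\Gamma$ agree and the unit is visibly the identity $T \to T$ under these identifications.
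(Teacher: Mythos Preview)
Your ``Abstract existence'' paragraph is correct and is precisely the paper's argument: the paper's entire proof reads ``Since localization and tensor products commute with colimits, $i$ preserves colimits and a right adjoint exists.'' You have simply spelled out the invocation of the adjoint functor theorem in more detail. That paragraph alone proves the lemma.

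The remaining two paragraphs go beyond what the lemma requires and are not fully justified. Your explicit formula $\Gamma(N)_A = R_A \otimes_{R_{\emptyset}} N^{\flat}$ and the adjunction bijection both depend on the descent identification $T_A \cong R_A \otimes_{R_{\emptyset}} T^{\flat}$ for cocartesian $T$, which you flag as the ``delicate input'' but do not prove; saying it is ``manageable'' is not a proof. (Incidentally, the reason flatness is available is stronger than you indicate: the ring $R_{\emptyset} = \Gamma(\Q_{\fX})$ of locally constant $\Q$-valued functions on a Stone space is von~Neumann regular, so \emph{every} $R_{\emptyset}$-module is flat, and the same applies to each $R_A$. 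This makes the descent claim plausible, but it still needs an argument that the augmented adelic complex remains exact after tensoring, together with the identification of cocartesian diagrams with those extended from $R_{\emptyset}$-modules.) Since the lemma is already established by your first paragraph, these issues do not affect its validity, but you should either complete the descent argument or drop the explicit description.
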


\begin{proof}
Since localization and tensor products commute with colimits, $i$
preserves colimits and a right adjoint exists. 
  \end{proof}

\subsection{Diagrams of module categories}
On the other hand, the diagram $R_\bullet$ gives rise to a diagram
$R_{\bullet}$-modules of module categories. The maps relating the
categories are extensions of scalars, and as such they have left
adjoints. We may therefore apply the theory of \cite{diagrams} to
give the category of $R_{\bullet}$-modules a model structure. As in
\cite[4.1]{diagrams} the cellularization of  the diagram of module
categories gives a model for modules over the homotopy inverse limit
ring. As in \cite[9.5]{adelicm} this is the category of homotopy
cocartesian diagrams.

  \begin{prop}
    The category of $R_{\bullet}$-modules admits the algebraically and 
    diagrammatically injective model structure, and we may cellularize 
    it with respect to the diagram $R_{\bullet}$. There are Quillen 
    equivalences 
    $$\Gamma (\Q_{\fX})\mbox{-mod}\simeq 
    \cell_R R_{\bullet}\mbox{-mod} \simeq \ilim^w_AR_A\mbox{-mod} \simeq DG-\cA(\fX).  $$
    \end{prop}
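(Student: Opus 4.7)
The plan is to invoke the general framework for diagrams of model categories developed in \cite{diagrams}, together with the specific identification of the homotopy inverse limit already obtained in Section \ref{sec:shvcubes}. First, since each structural map $R_A \lra R_B$ in the punctured cube is a ring homomorphism and the corresponding functor on modules is extension of scalars $R_B\tensor_{R_A}(-)$, which is left adjoint to restriction, the punctured cube of module categories fits the setup of \cite{diagrams}. This produces the algebraically and diagrammatically injective model structure on $R_\bullet\mbox{-mod}$, with weak equivalences and fibrations detected vertexwise in the injective model structure on $R_A\mbox{-mod}$.

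Next, cellularize with respect to the diagram $R_\bullet$ viewed as an $R_\bullet$-module. By \cite[4.1]{diagrams} the cellularization of such a diagram category of modules is Quillen equivalent to modules over the homotopy inverse limit ring. Combined with Corollary \ref{cor:hopullcube}, which shows $\holim_{A\neq\emptyset} R_A \simeq \Gamma(\Q_{\fX})$ (the punctured cube is formal in the strongest sense, its homotopy limit being equal to its ordinary limit), this supplies the first equivalence
$$\Gamma(\Q_{\fX})\mbox{-mod} \simeq \cell_R R_{\bullet}\mbox{-mod}.$$
The middle equivalence $\cell_R R_{\bullet}\mbox{-mod}\simeq \ilim^w_A R_A\mbox{-mod}$ is essentially a matter of unpacking the notation: $\ilim^w$ denotes the weighted (homotopy) limit of the diagram of module categories, and by \cite[9.5]{adelicm} cellular $R_\bullet$-modules are precisely the homotopy cocartesian diagrams, which present the weighted limit.

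For the last equivalence $\ilim^w_A R_A\mbox{-mod} \simeq DG\mbox{-}\cA(\fX)$, one observes that $\cA(\fX)$ is by construction the subcategory of \emph{strictly} cocartesian diagrams, and the DG structure on $\cA(\fX)$ models the homotopy cocartesian diagrams. The coreflection $\Gamma: R_\bullet\mbox{-mod}\lra \cA(\fX)$ of Lemma \ref{lem:cAfXcoref}, applied after a cofibrant replacement in which the transition maps have been resolved, produces the Quillen equivalence. Acyclicity of the relevant sheaves (Lemma \ref{lem:soft}, Lemma \ref{lem:acycliccube}) ensures that the derived tensor products $R_B\tensor^L_{R_A}(-)$ agree with the strict tensor products on cofibrant diagrams, so that homotopy cocartesian and strictly cocartesian diagrams coincide up to equivalence.

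The main obstacle, and the only place where care is needed, is in the third equivalence: one must check that the cellular-cofibrant replacement of an arbitrary $R_\bullet$-module is equivalent to a strictly cocartesian diagram, so that the inclusion $DG\mbox{-}\cA(\fX) \hookrightarrow DG\mbox{-}R_{\bullet}\mbox{-mod}$ descends to an equivalence on the cellularized homotopy categories. This comes down to showing that the localization functors defining $R_A$ from $R_{d_0A}$ are exact (which they are, being localizations at multiplicatively closed sets of idempotents) so that derived and underived extensions of scalars coincide. Once this exactness is in place, the tower of Quillen equivalences assembles by transitivity.
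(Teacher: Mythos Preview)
Your treatment of the first two equivalences is essentially the paper's own argument: you correctly invoke Corollary~\ref{cor:hopullcube} together with the Cellularization Principle \cite[4.1]{diagrams} for the first, and \cite[9.5]{adelicm} for the second.

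The third equivalence is where your argument diverges from the paper and has a genuine gap. You claim that the structure maps $R_A\to R_B$ are ``localizations at multiplicatively closed sets of idempotents'', but this is not true for all of them. In the rank~1 case already, the map $R_1=\Q\to R_{10}=\cIi\prod_{\cF}\Q$ is the diagonal into an infinite product followed by a localization; it is not itself a localization of $\Q$. More generally the maps $d^b$ adding a height at the bottom are described in Subsection~\ref{subsec:splicingrings} as localized diagonals. So the exactness claim needs a different justification (flatness may still hold, e.g.\ over $\Q$ for the vertex-to-edge maps, but it is not uniform and your stated reason does not cover it). Your appeal to Lemmas~\ref{lem:soft} and~\ref{lem:acycliccube} is also misplaced here: those concern acyclicity of sheaves on $\fX$, not flatness of the ring maps $R_A\to R_B$.

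Even granting flatness, your argument is incomplete: knowing that homotopy cocartesian and strictly cocartesian agree objectwise does not by itself show that $DG\text{-}\cA(\fX)$ with its objectwise weak equivalences presents the cellularized homotopy theory. The paper handles this via the Cellular Skeleton Theorem as in \cite{adelic1}: one uses the coreflection of Lemma~\ref{lem:cAfXcoref} and then checks that the counit is an equivalence on a set of injective cogenerators, which are identified explicitly as the objects corresponding to constant sheaves on the pure strata $\fD_i$ (those ``constant'' on $A\subseteq[0,i]$). That cogenerator check is the substantive step you are missing.
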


    \begin{proof}
First of all, we saw in Corollary \ref{cor:hopullcube} that the ring $\Gamma
(\Q_{\fX})$ is the homotopy limit of the diagram. By the
Cellularization Principle \cite[4.1]{diagrams}
this gives the first equivalence. The second is \cite[9.5]{adelicm},
and the last is the Cellular Skeleton Theorem this follows as in
\cite{adelic1}:  by Lemma \ref{lem:cAfXcoref} $DG-\cA(\fX)$ is a
coreflective subcategory, and the counit is a
an equivalence on the injective cogenerators: indeed, the cogenerators
correspond to the sheaves constant on pure strata, and the objects in
$\cA (\fX)$ corresponding to the constant sheaves on $\fD_i$ are those `constant' 
 on $A\subseteq [0,i]$. 
\end{proof}

\subsection{Counterparts for sheaves}
We have described results based on a diagram of {\em rings}, 
but in view of Lemma \ref{lem:soft}, 
they apply equally well to the diagram $\{ \mcR_{A}\}_{A\neq 
  \emptyset}$ of {\em sheaves of rings}.

We only comment that the individual modules in the standard model associated to the diagram of
sheaves are automatically complete, so that we use the notation
$\cAhatshv(\fX)$ for this category.

  \begin{prop}
    The category of $\mcR_{\bullet}$-modules admits the algebraically and 
    diagrammatically injective model structure, and we may cellularize 
    it with respect to the diagram $\mcR_{\bullet}$. There are Quillen 
    equivalences 
    $$\shv/\fX = \Q_{\fX}\mbox{-mod}\simeq 
    \cell_R \mcR_{\bullet}\mbox{-mod}\simeq \ilim^w_A\mcR_A\mbox{-mod} \simeq DG-\cAhatshv(\fX).  $$
    \end{prop}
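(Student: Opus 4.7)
The plan is to mirror the proof of the preceding proposition (for diagrams of rings), replacing the diagram $R_{\bullet}$ by the diagram $\mcR_{\bullet}$ of sheaves of rings and using Lemmas \ref{lem:soft} and \ref{lem:acycliccube} in place of Corollary \ref{cor:hopullcube}. The four equivalences split into four independent sub-claims, which I would handle in turn.

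First I would set up the model structure. Since the maps in the diagram $\mcR_{\bullet}$ are units of monadic adjunctions (extension of scalars along a map of sheaves of rings), each category $\mcR_A\mbox{-mod}$ has an injective model structure, and the transition functors have left adjoints. The general theory of \cite{diagrams} then gives the algebraically and diagrammatically injective model structure on $\mcR_{\bullet}\mbox{-mod}$; this is formal, just as in the ring case.

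Next I would produce the first Quillen equivalence $\shv/\fX \simeq \cell_{\mcR}\mcR_{\bullet}\mbox{-mod}$. By Lemma \ref{lem:soft} every sheaf of $\Q$-modules on $\fX$ is soft and hence acyclic, so taking global sections of the acyclic cube of sheaves in Lemma \ref{lem:acycliccube} identifies $\Q_{\fX}$ as the homotopy limit of $\mcR_{\bullet}$ not only ringwise but sheaf-theoretically. The Cellularization Principle \cite[4.1]{diagrams} then applies verbatim: cellularizing the diagram category with respect to $\mcR_{\bullet}$ produces a category Quillen equivalent to modules over the homotopy inverse limit sheaf of rings, which is $\Q_{\fX}$, i.e.\ $\shv/\fX$. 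The second equivalence $\cell_{\mcR}\mcR_{\bullet}\mbox{-mod}\simeq \ilim^w_A\mcR_A\mbox{-mod}$ is the content of \cite[9.5]{adelicm} applied to the diagram of sheaves of rings rather than rings.

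For the final equivalence with $DG\mbox{-}\cAhatshv(\fX)$, I would follow the Cellular Skeleton argument of \cite{adelic1}. The key points are (i) that the inclusion $\cAhatshv(\fX)\hookrightarrow \mcR_{\bullet}\mbox{-mod}$ is coreflective (same argument as Lemma \ref{lem:cAfXcoref}: localization and tensor product of sheaves commute with colimits), (ii) that the counit is an equivalence on injective cogenerators, and (iii) that these cogenerators generate the cellularized category. The cogenerators one uses are the sheaves constant on pure strata $\fD_i$, and the corresponding cocartesian modules in $\cAhatshv(\fX)$ are the ones supported on the sub-cube $A\subseteq [0,i]$; these are exactly parallel to the ring case. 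The main obstacle, and the step needing real care, is (ii)—verifying that the counit of $i\dashv \Gamma$ is an equivalence on these injective cogenerators, since the sheaf-theoretic localizations behave differently from the algebraic ones (cf.\ Warning \ref{warn:RARA}); here one uses that the individual components of a standard-model object built from sheaves are already complete, which is why we pass to $\cAhatshv$ rather than $\cA$. Softness (Lemma \ref{lem:soft}) ensures that all the homotopical calculations collapse to the underlying sheaf-level statements, so no further higher derived functors intervene.
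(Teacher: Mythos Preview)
Your proposal is correct and follows exactly the approach the paper takes. In fact the paper gives no separate proof of this proposition at all: it simply remarks that, in view of Lemma \ref{lem:soft}, the proof of the preceding proposition for the diagram of rings $R_{\bullet}$ applies verbatim to the diagram of sheaves of rings $\mcR_{\bullet}$, which is precisely the plan you have written out in detail.
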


    \subsection{Equivalence of the standard models}
    One might expect that an equivalence would be established by 
     showing the corresponding module categories over $R_A$ and over 
     $\mcR_A$ are equivalent, but in fact this is generally not true
     (Warning \ref{warn:RARA}). 
    
     \begin{lemma}
       If $\fY\subseteq \fX$ then passage to global sections
       $$\Gamma (\fY; \cdot ): \shv/\fY\lra \Gamma(\Q_{\fY})\modules$$
       is the inclusion of a reflective subcategory. The
       reconstruction $e$ of an \etale\ space from its module of global sections is
       left adjoint to $\Gamma (\fY; \cdot)$ and the counit is an
       isomorphism
       $\cG\cong e\Gamma (\fY;\cG)$.

       If $\fY$ is compact, the unit is also an equivalence. 
     \end{lemma}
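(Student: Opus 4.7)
The plan is to construct $e$ via the \etale\ space formalism, verify the adjunction, and then analyse the unit and counit separately; compactness enters only in the last step. For an $R$-module $M$ (with $R := \Gamma(\fY;\Q_{\fY})$), I define the \etale\ space $\coprod_{y \in \fY} M_y$, where $M_y := M/\fm_y M$ and $\fm_y = \{f \in R : f(y)=0\}$ is the maximal ideal at $y$. Each $m \in M$ provides a section $y \mapsto [m]_y$; restrictions of these sections to clopens of $\fY$ generate the topology. The resulting sheaf $eM$ has stalk $M_y$ at $y$ by construction. The adjunction $e \dashv \Gamma(\fY;\cdot)$ then follows from the universal property of the \etale\ space: a sheaf map $eM \to \cG$ is determined by its action on the generating sections $[m]$, and $R$-linearity of this action is exactly the condition for an $R$-module map $M \to \Gamma(\fY;\cG)$.

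For the counit $\epsilon\colon e\Gamma(\fY;\cG) \to \cG$ I check the iso on stalks at each $y \in \fY$. Surjectivity: a germ in $\cG_y$ is represented by some $s \in \cG(V)$ with $V \ni y$ clopen (since clopens form a basis), and the clopen splitting $\fY = V \sqcup V^c$ yields $\Gamma(\fY;\cG) = \cG(V) \oplus \cG(V^c)$, so $s$ lifts to a global section. Injectivity: if $s \in \Gamma(\fY;\cG)$ vanishes on a clopen $V \ni y$, then $s = e_{V^c} \cdot s$ with $e_{V^c} \in \fm_y$, whence $s \in \fm_y \Gamma(\fY;\cG)$. Hence $\Gamma(\fY;\cG)/\fm_y \Gamma(\fY;\cG) \cong \cG_y$ and $\epsilon$ is iso.

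For $\fY$ compact, the unit $\eta\colon M \to \Gamma(\fY;eM)$ is iso. For surjectivity, a continuous section $s$ of the \etale\ space is locally given by some $m \in M$ over a clopen neighborhood; compactness yields a finite disjoint clopen refinement $\fY = \bigsqcup_{i=1}^n V_i$ with $s|_{V_i} = [m_i]$, and $m := \sum_i e_{V_i} m_i \in M$ maps to $s$. For injectivity, if $m \in \fm_y M$ for every $y$, then (since each generator $f \in \fm_y$ vanishes on some clopen neighborhood of $y$) there is a clopen $V_y \ni y$ with $e_{V_y} m = 0$; compactness and disjoint refinement yield $\fY = \bigsqcup_i V_{y_i}$ with $e_{V_{y_i}} m = 0$, so $m = \sum_i e_{V_{y_i}} m = 0$. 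The main obstacle --- and the reason compactness is essential --- is that without it, infinite clopen partitions produce continuous sections of $eM$ that need not arise from a single element of $M$; a typical failure is $\fY = \N$ discrete with $M = \bigoplus_{n\in\N} \Q$ over $R = \prod_{n\in\N} \Q$, where $\Gamma(\fY;eM) = \prod_{n\in\N} \Q \neq M$.
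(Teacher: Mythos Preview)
Your proof is correct and follows essentially the same route as the paper's. The only cosmetic difference is that you describe the stalk of $eM$ as the quotient $M/\fm_yM$, whereas the paper writes it as the colimit $\colim_{U\ni y}e_UM$ over clopen neighbourhoods; these agree because every $f\in\fm_y$ satisfies $f=(1-e_U)f$ for some clopen $U\ni y$, so $\fm_yM=\bigcup_U(1-e_U)M$. Your finite-cover arguments for injectivity and surjectivity of the unit are identical to the paper's, and your closing counterexample with $\fY=\N$ is a helpful addition the paper does not include.
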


     \begin{proof}
The functor $e$ is defined on stalks by $(eM)_x=\colim_{U\ni
  x}e_UM$. Since $\fX$ has a basis of open and closed sets, 
 is easy to see that the counit is an isomorphism $(e\Gamma \cF)_x=\colim_{U\ni x}
 \cF(U)=\cF_x$. 

Consider  the unit $\eta: M\lra \Gamma e M$. If $\eta (a)=0$ then for
every $x\in \fX$ there is a neighbourhood $U_x$ with
$e_{U_x}(a)=0$. Taking a finite subcover we express $\fX$ as a
disjoint union of finitely many pieces on which $a$ is zero, so that
$a=0$. For surjectivity, we suppose given a section $\sigma$ and for
each $x\in \fX$ we pick a clopen neighbourhood $U_x$ of $x$  with
$\sigma_x$ represented as $e_{U_x}a_x$. Taking a finite subcover we
find $\fX$ as a disjoint union of clopen sets on which $\sigma $ is
represented.

       \end{proof}

       We may immediately deduce that two abelian categories of
       concern are equivalent. 
  \begin{cor}
    There is a Quillen equivalence of DG-categories
    $$DG-\shv/\fX \simeq DG-\Gamma (\Q_{\fX})\modules.$$
\end{cor}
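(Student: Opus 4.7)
The plan is to bootstrap the DG-equivalence from the adjoint equivalence of abelian categories established in the preceding lemma. Because $\fX$ is a Stone space of finite Cantor-Bendixson rank it is in particular compact, so the lemma applied to $\fY=\fX$ gives that both the unit $M\to \Gamma e M$ and the counit $\cG\to e\Gamma\cG$ are isomorphisms. Consequently the adjunction
\[
e : \Gamma(\Q_\fX)\modules \;\rightleftarrows\; \shv/\fX : \Gamma(\fX;\cdot)
\]
is an adjoint equivalence of abelian categories.

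The next step is to extend this termwise to chain complexes. Since $e$ and $\Gamma(\fX;\cdot)$ are mutually inverse additive functors, both are automatically exact, and applying them degreewise produces an equivalence between the underlying categories of chain complexes that carries quasi-isomorphisms to quasi-isomorphisms and degreewise monomorphisms (resp.\ epimorphisms) to the same. In particular, the shift, cone, and suspension constructions are preserved strictly.

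Finally, I would endow both $DG\text{-}\shv/\fX$ and $DG\text{-}\Gamma(\Q_\fX)\modules$ with the injective model structure on unbounded chain complexes over a Grothendieck abelian category (weak equivalences the quasi-isomorphisms, cofibrations the degreewise monomorphisms, fibrations determined by the right lifting property). Both sides satisfy the required hypotheses: $\Gamma(\Q_\fX)\modules$ is Grothendieck abelian as a ring-module category, and $\shv/\fX$ is Grothendieck abelian as a sheaf category. Since this model structure is intrinsic to the underlying Grothendieck abelian category, the equivalence of the previous paragraph is automatically a Quillen equivalence.

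There is not really a substantive obstacle once the preceding lemma is in hand; the only point requiring any care is checking that the model structures chosen on the two sides are indeed the intrinsic injective ones, so that an equivalence of the underlying abelian categories upgrades automatically. This is standard, and matches the model structures implicit in the other Quillen equivalences of this section.
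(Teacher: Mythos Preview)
Your proposal is correct and matches the paper's approach: the paper simply states the corollary as an immediate consequence of the preceding lemma (the abelian equivalence between $\shv/\fX$ and $\Gamma(\Q_\fX)$-modules for compact $\fX$), and you have filled in the standard passage from an equivalence of Grothendieck abelian categories to a Quillen equivalence of their injective DG model structures. There is nothing more to add.
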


\begin{remark} 
\label{rem:adjequ}
We have the diagram of left adjoints 
      $$\xymatrix{
  &\Gamma  (\Q_{\fX})\mbox{-mod}\rto^{\simeq}_{4.3}\dto_{\simeq}^{4.5}&\cA (\fX) \dto^{\simeq}_{5.3}\\
  \shv/\fX\ar@{=}[r]
  &\Q_{\fX}\mbox{-mod}\rto^{\simeq}_{4.4}&\cAhatshv (\fX)  
}$$ 
Our general proof uses 4.3, 4.4. and 4.5, whilst the proof of 5.3 is
only given for the 1-dimensional case (in Section \ref{sec:dim1}). We point this out since
the first contact with rational $G$-spectra will land with $\cA (\fX)$,
and if we forgo the equivalence with $\Gamma(\Q_{\fX})$-modules, the proof
through the right hand vertical is more direct.  
   \end{remark}

   \section{Dimension 1}
   \label{sec:dim1}
The 1-dimensional case is very attractive because we can be completely
explicit, but most of the interesting features already appear. 

\subsection{The context}
We suppose the topological space $\fX$ is the one-point compactification of a
countably infinite set $\cF$. Because of the examples we have in mind,
we write $G$ for the compactifying point. This has a filtration with
pure strata $\fD_0= \cF$, and $\fD_1=\{ G\}$.

In this case there are no fewer than five distinct points of view on
the abelian category giving the algebraic model for $G$-spectra with
geometric isotropy in $\fX$. There are three in algebra: the standard
model, the complete model and the category of modules over the ring of
functions on $\fX$. There are two in sheaves: the category of all
sheaves of $\Q$-modules and the complete model in sheaves (which
coincides with the standard model in sheaves).  The point about
mentioning all five is that they do embody
different points of view: the functors between them make
significant changes and are only equivalences because of special
algebraic features of this case.

\subsection{Rings in algebra}
In the category of rational vector spaces we consider the 
rings 
$$R_1=\Q^{\fD_1}=\Q \mbox{  and } R_0=\Q^{\fD_0}=\prod_{F\in 
  \cF}\Q. $$
We also consider the multiplicatively closed set 
$$\cI=\{ e: \fD_0\lra \{ 0,1\} \st e =1 \mbox{ almost everywhere }
  \}$$
  of idempotents with cofinite support, 
and  we write $R_{10}=\cIi R_0$. This gives a diagram $R_{\bullet}$ of rings 
$$\xymatrix{
  &\Q\dto \\
  \prod_{\cF} \Q\rto &\cIi \prod_{\cF}\Q 
}$$
We note that there is a  non-split short exact sequence 
$$0\lra \bigoplus \Q\lra \prod \Q\lra \cIi \prod \Q\lra 0. $$
We write $t=R_{10}=\cIi \prod \Q$ for brevity. 

There are two categories of  $R_{\bullet}$-modules of interest. 
\begin{defn}
(i) The {\em standard model} $\cA (\fX)$ has objects $N \lra t\tensor V \lla V$ where $N 
\lra t\tensor V$ is inverting $\cI$.  We write $(N\lra t\tensor V)$ (with round brackets) 
for brevity. 

(ii) The {\em complete model}  $\cAhat (\fX)$ has objects $\Nhat \lra \cIi \Nhat \lla V$
where $\Nhat$ is a module over $\prod \Q$ which is complete in the
sense that $\Nhat\cong \prod_se_s\Nhat$. We write $[\Nhat\lla  V]$
(with square brackets) 
for brevity. 
\end{defn}

We need one lemma about modules over $R_0$. First, we note that any
subset $U\subseteq \cF$ defines an idempotent $e_U$ with support $U$,
and we will write
$$e_GM=\colim_Ue_UM$$
where the colimit is over cofinite subsets. We note that $e_G$ is a
functor and not an element of $R_0$.
\begin{lemma}
If $M$ is an $R_0$-module with $e_FM=0$ for all $F\in \cF$ and
$e_GM=0$ then $M=0$. The corresponding statement holds in the derived category.
  \end{lemma}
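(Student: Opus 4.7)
The plan is to prove the statement in the abelian case by an explicit idempotent decomposition argument and then bootstrap to the derived case by noting that the hypotheses pass cleanly to homology.

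For the abelian statement, take any $m \in M$. Since $e_G M = \colim_U e_U M = 0$, the element $m$ is killed in some stage of the colimit, so there is a cofinite subset $U \subseteq \cF$ with $e_U m = 0$. Let $\cF \setminus U = \{F_1,\ldots,F_k\}$ be the finite complement. In $R_0 = \prod_{F\in\cF}\Q$ the idempotents $e_U, e_{F_1},\ldots,e_{F_k}$ are mutually orthogonal and sum to the unit $1$. Hence
\[
m = 1\cdot m = e_U m + \sum_{i=1}^k e_{F_i} m = 0,
\]
using $e_U m=0$ by construction and $e_{F_i}m\in e_{F_i}M=0$ by hypothesis. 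Since $m$ was arbitrary, $M=0$.

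For the derived statement, the key observation is that each idempotent $e_F \in R_0$ gives an exact functor $e_F(\cdot) = e_F R_0 \otimes_{R_0} (\cdot)$ on $R_0$-modules, since $e_F R_0 \cong \Q$ is a projective summand of $R_0$. Thus $e_F(\cdot)$ commutes with taking homology. Similarly, the functor $e_G(\cdot) = \colim_U e_U(\cdot)$ is a filtered colimit of exact functors, hence itself exact and commuting with homology. So if $M$ is a complex with $e_F M \simeq 0$ for all $F\in\cF$ and $e_G M \simeq 0$, then for every $n$ we have $e_F H_n(M) = H_n(e_F M) = 0$ and $e_G H_n(M) = H_n(e_G M) = 0$. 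The abelian case applied to each $H_n(M)$ gives $H_n(M)=0$ for all $n$, hence $M\simeq 0$.

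The main potential obstacle is just keeping the derived bookkeeping honest, i.e.\ verifying that $e_G$ commutes with homology; but this is immediate because filtered colimits of modules over a ring are exact and the idempotent action $e_U$ is already exact, so there is nothing delicate here. The abelian step is essentially a partition-of-unity argument in $R_0$, which is the conceptual heart of the statement.
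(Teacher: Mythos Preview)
Your proof is correct and follows essentially the same approach as the paper's own proof: the abelian argument is the same partition-of-unity via idempotents, and for the derived statement the paper simply notes that ``passage to idempotents and directed colimits are exact,'' which is exactly the exactness observation you spell out in more detail.
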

  \begin{proof}
If $x\in M$ then $x$ maps to zero in $e_GM$ and hence $x$ is zero in
$e_UM$ for some cofinite set $U$. The complementary set $U^c$ is
finite so $x\in e_{U^c}M=\prod_{F\in U^c}e_FM$ and $x$ is
zero. Passage to idempotents and directed colimits are exact. 
    \end{proof}

    \begin{lemma}
      \label{lem:AisAhat}
  There is an adjoint equivalence
  $$\adjunction{\kappa}{\cA (\fX)}{\cAhat (\fX)}{\tau}.$$
  The left adjoint is defined by
  $\kappa (N\lra t\tensor V)=[\cIi \Nhat \lla V]$ where
  $\Nhat=\prod_Fe_FN$. The right adjoint is given by 
  $\tau 
  [\cIi \Nhat \lla V]=(N\lra t\tensor V)$ where $N$ is defined by the 
  pullback square 
  $$\xymatrix{
    N\rto \dto & t\tensor V\dto \\
    \Nhat \rto &\cIi \Nhat 
    }$$
\end{lemma}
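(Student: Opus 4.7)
The plan is to check well-definedness of $\kappa$ and $\tau$, then verify the unit and counit of the proposed adjunction are isomorphisms by applying the lemma immediately preceding this one (that an $R_0$-module $M$ with $e_F M = 0$ for all $F\in\cF$ and $e_G M = 0$ must vanish) to the kernel and cokernel of the natural comparison map. For well-definedness: the module $\Nhat=\prod_F e_F N$ is complete by construction, and the required structure map $V\lra\cIi\Nhat$ arises as the composite $V\hookrightarrow t\tensor V\cong\cIi N\lra\cIi\Nhat$, where the middle isomorphism uses the hypothesis that $N\lra t\tensor V$ inverts $\cI$. For $\tau$, applying the exact functor $\cIi$ to the pullback $N=\Nhat\times_{\cIi\Nhat}(t\tensor V)$, and using that $\cIi$ applied to the unit $\Nhat\lra\cIi\Nhat$ is the identity, yields $\cIi N\cong t\tensor V$, so the structure map does invert $\cI$.

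The decisive technical point is that $e_F$ vanishes in $t=\cIi\prod\Q$: indeed $e_F\cdot e_U=0$ for any cofinite $U\not\ni F$, so the image of $e_F$ in $t$ is zero. Consequently $e_F(t\tensor V)=0$ and $e_F\cIi\Nhat=0$, and applying the exact functor $e_F$ to the pullback square $N=\Nhat\times_{\cIi\Nhat}(t\tensor V)$ collapses it to $e_F N=e_F\Nhat$. For the counit $\kappa\tau\lra\mathrm{id}$, taking the product over $F$ and invoking completeness of the input $\Nhat$ yields $\prod_F e_F N\cong\Nhat$; a direct check that the induced map $V\lra\cIi\Nhat$ agrees with the original structure map then gives $\kappa\tau\cong\mathrm{id}$.

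For the unit $\eta:(N\lra t\tensor V)\lra\tau\kappa(N\lra t\tensor V)$, the module component $\eta:N\lra N':=\Nhat\times_{\cIi\Nhat}(t\tensor V)$ is the identity on $e_F$ for every $F\in\cF$ (both sides compute to $e_F N$ by the previous paragraph). Applying $e_G=\cIi$, the left is $\cIi N\cong t\tensor V$ (inverting hypothesis) and the right is $\cIi N'=\cIi\Nhat\times_{\cIi\Nhat}(t\tensor V)\cong t\tensor V$ (pullback along an identity), with $\eta$ matching this isomorphism. Hence $\ker\eta$ and $\cok\eta$ are annihilated by every $e_F$ and by $e_G$, and the preceding lemma forces $\eta$ to be an isomorphism. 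The main obstacle is not any single computation but the careful bookkeeping --- disentangling which idempotents act where and confirming that structure maps transport correctly under both compositions; the support lemma is the clean tool that isolates the ``stratum'' part (captured by $e_F$) from the ``generic'' part (captured by $\cIi$), once one has observed that these kill complementary pieces of $t\tensor V$ and $\cIi\Nhat$.
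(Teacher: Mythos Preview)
Your proof is correct and follows essentially the same approach as the paper: both arguments leave $V$ alone, check the counit by observing $e_FN=e_F\Nhat$ from the pullback square (so $\prod_Fe_FN=\Nhat$ by completeness), and handle the unit by verifying the comparison map $N\lra N'$ is an isomorphism after applying each $e_F$ and after applying $e_G=\cIi$. You are more explicit than the paper in two respects: you spell out well-definedness of $\kappa$ and $\tau$, and you explicitly invoke the preceding support lemma on $\ker\eta$ and $\cok\eta$, whereas the paper leaves that implication tacit.
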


\begin{proof}
  It is worth writing down the simple proof. Neither functor changes
  the vertex $V$, so we need only consider the nub.

  The argument for the counit is straightforward. We need only observe
  that when calculating $\tau [\cIi \Nhat\lla  V]$ the defining
  pullback square for $N$ shows $e_F\Nhat=e_FN$, so that
  $\Nhat=\prod_Fe_FN$ and $\kappa \tau =id$.

  For the unit we must calculate   $\tau \kappa (N\lra t\tensor
  V)$. It remains to show that if $N$ is given the square 
    $$\xymatrix{
    N\rto \dto & t\tensor V\dto \\
    \Nhat \rto &\cIi \Nhat 
  }$$
  formed by completing $N$ is a homotopy pullback. Since $\Nhat\lra
  \cIi\Nhat$ is surjective it suffices to prove the square is a
  pullback. The square provides a map $\nu: N\lra P$ where $P$ is the
  pullback. 

  We will observe that $e_F\nu$ is a isomorphism for all $F\in \fD_0$,
  and that $e_G\nu=\colim_Ue_U\nu$ is an isomorphism. For $e_F$ the
  statement is immediate: $e_FN=e_F\Nhat$ by definition and $e_F$ annihilates
  the right hand column, the square is a pullback after applying
  $e_F$.

  For $e_G$, we see $\Nhat=\ilim_Ae_AN$ over finite sets $A\subseteq
  \cF$, and hence the fibre is $\ilim_Ue_UN$ over cofinite sets. Thus
  $\cI$ is already invertible. 
\end{proof}

\subsection{Sheaves of rings}
We write $i:\cF=\fD_0\lra \fX$ for the open inclusion and $j:
\{G\}=\fD_1 \lra \fX$ for the closed inclusion.  In the category of sheaves over $\fX$ we consider the 
rings 
$$\mcR_1=j_*\Q_{G} 
\mbox{  and } \mcR_0=i_*\Q_{\cF} 
. $$
We also consider the sheaf $\mcR_{10}=j_*j^* \mcR_0$. This gives a diagram $\mcR_{\bullet}$ of rings 
$$\xymatrix{
\Q_{\fX}  \rto \dto &\mcR_1 \dto \\
  \mcR_0\rto &\mcR_{10}
}$$
which is both a pullback and a homotopy pullback.
The Snake Lemma applied to the diagram 
$$\xymatrix{
0\rto &\bigoplus_Fi^F_*\Q_F\rto \dto &\mcR_0\rto\dto&\mcR_{10}\rto \dto^=
&0\\
0\rto &\Q_{\fX}\rto &\mcR_0\oplus \mcR_1\rto &\mcR_{10}\rto &0
}$$
gives the non-split short exact sequence 
$$0\lra \bigoplus_F i^F_*\Q_F \lra \Q_{\fX}\lra j_*\Q_G\lra 0. $$

There are three abelian  categories of  interest. First the category
of $\Gamma (\Q_{\fX})$-modules (which is a category of $\Q$-modules)
and then two categories of sheaves. The first is the category of
modules over  $\Q_{\fX}$, and the second is an analogue of the
standard model. However we observe that in the category of sheaves 
the modules in the standard model are complete. 
\begin{lemma}
  \label{lem:shvF}
There are equivalences of categories 
$$\mcR_0\modules/\fX \simeq \Q_{\cF}\modules/\cF\simeq \prod_{F\in \cF}(\Q\modules)$$
\end{lemma}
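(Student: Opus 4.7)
The plan is to dispatch the second equivalence immediately and then establish the first via the adjunction coming from the open inclusion $i : \cF \hookrightarrow \fX$.

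For the second equivalence, I would simply observe that $\cF$ is a discrete topological space, so a sheaf on $\cF$ is determined by its stalks, and every family of stalks arises. A $\Q_{\cF}$-module structure amounts to a $\Q$-module structure stalk by stalk. This gives $\Q_{\cF}\modules/\cF \simeq \prod_{F \in \cF}(\Q\modules)$ directly.

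For the first equivalence, the plan is to use the adjunction $(i^*, i_*)$. Since $\mcR_0 = i_*\Q_{\cF}$ by definition, restriction $i^*$ takes $\mcR_0$-modules on $\fX$ to $i^*\mcR_0 = \Q_{\cF}$-modules on $\cF$, and pushforward $i_*$ goes the other way. The counit $i^*i_*\cN \to \cN$ is an iso by the general fact that $i^*i_* \simeq \mathrm{id}$ for an open inclusion. The work is in verifying that the unit $\cM \to i_*i^*\cM$ is an isomorphism for every $\mcR_0$-module $\cM$. This can be checked stalkwise: at any $F \in \cF$ both stalks equal $\cM_F$, so the content lies at the limit point $G$, where one must show $\cM_G \to (i_*i^*\cM)_G = \colim_V \cM(V \cap \cF)$ is an iso, with the colimit taken over clopen neighborhoods $V$ of $G$ (equivalently, cofinite subsets $V \cap \cF$ of $\cF$).

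The main obstacle is pinning down this stalk identification at $G$. The key input is the $\mcR_0$-module structure on $\cM_G$: since $(\mcR_0)_G = \cIi_{G|\cF}\prod_F\Q$ is precisely the localization inverting the cofinite idempotents, the $\mcR_0$-action forces sections of $\cM$ near $G$ to be controlled by their restrictions to the intersection with $\cF$, yielding the required isomorphism in the colimit. The resulting equivalence of abelian categories is the sheaf-theoretic counterpart to Warning~\ref{warn:RARA}: genuine sheaves of $\mcR_0$-modules are much more rigid than arbitrary modules over the global ring $R_0 = \prod_F\Q$.
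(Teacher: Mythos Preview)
Your second equivalence is fine. The first, however, has a genuine gap at exactly the point you label ``the main obstacle'': the unit $\cM \to i_*i^*\cM$ is \emph{not} an isomorphism for arbitrary $\mcR_0$-modules on $\fX$, and your hand-wave that ``the $\mcR_0$-action forces sections near $G$ to be controlled by their restrictions to $\cF$'' does not go through.

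Here is a counterexample. The stalk ring $(\mcR_0)_G=\cIi_{G|\cF}\prod_F\Q$ is nonzero, so choose any nonzero module $M$ over it and form the skyscraper $\cM=j_*M$ at $G$. For each open $U\ni G$ the ring $\mcR_0(U)=\prod_{F\in U\cap\cF}\Q$ acts on $\cM(U)=M$ via the canonical map $\prod_{F\in U\cap\cF}\Q\to(\mcR_0)_G$, and these actions are compatible with restriction; on opens not containing $G$ the sections of $\cM$ vanish. Thus $\cM$ is a genuine unital $\mcR_0$-module sheaf on $\fX$, yet $i^*\cM=0$ and the unit is the map $M\to 0$. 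This also shows $i^*$ is not faithful, so no adjunction argument of the kind you outline can give the equivalence without further hypotheses.

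The paper states this lemma without proof (and makes the same assertion in Warning~\ref{warn:RARA}), so there is nothing to compare your approach against directly. What the paper actually \emph{uses} the lemma for---namely, justifying the ``complete'' terminology for the objects $\cN$ appearing in $\cAhatshv(\fX)$---only requires the essential image of $i_*$, which your counit argument correctly identifies with $\Q_\cF\modules/\cF$. If you want to salvage the statement, you should either restrict to that essential image, or else observe that the cofinite idempotents become invertible only on modules already pushed forward from $\cF$, which is precisely the condition singling out that subcategory.
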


Because of the lemma we will use the complete terminology. 

\begin{defn}
The {\em complete model} $\cAhatshv (\fX)$ has objects $\cN \lra
\mcR_{10}\tensor \cV \lla \cV$ where $\cN$ is an $\mcR_0$-module, $\cV$
is a $\mcR_1$-module and $\cN \lra \mcR_{10}\tensor \cV$
is an isomorphism on the stalk at $G$.
\end{defn}

\begin{lemma}
  The following four abelian categories are equivalent
  \begin{itemize}
  \item The category of sheaves of $\Q$-modules over $\fX$
  \item The complete model $\cAhatshv(\fX)$
  \item The category of modules over $\Gamma (\Q_{\fX})$
    \item The standard model $\cA (\fX)$.
\end{itemize}
$$\shv/\fX\simeq \cAhatshv(\fX)\simeq \Gamma (\Q_{\fX})\modules\simeq \cA(\fX)$$
  \end{lemma}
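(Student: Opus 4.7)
The plan is to establish a chain
$$\shv/\fX \simeq \cAhat(\fX) \simeq \cA(\fX) \simeq \cAhatshv(\fX),$$
together with the parallel equivalence $\shv/\fX \simeq \Gamma(\Q_\fX)\modules$ obtained separately. All four equivalences will be at the abelian level, as is appropriate for this 1-dimensional situation.

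The heart of the argument is the first link $\shv/\fX \simeq \cAhat(\fX)$, which I would establish using the open-closed recollement for $\fX = \cF \sqcup \{G\}$. Writing $i_\cF$ and $i_G$ for the open and closed inclusions, the standard sheaf gluing theorem presents $\shv/\fX$ as the category of triples $(\cF_\cF, \cF_G, \alpha)$ where $\cF_\cF$ is a sheaf on $\cF$, $\cF_G$ is a vector space, and $\alpha: \cF_G \lra i_G^*(i_\cF)_* \cF_\cF$. Since $\cF$ is discrete, $\cF_\cF$ amounts to a family $\{V_F\}_F$. A direct stalk computation yields $i_G^*(i_\cF)_*\cF_\cF = \colim_{U \ni G} \prod_{F \in U\cap\cF} V_F = \cIi \prod_F V_F$, where $\cI$ is the set of cofinitely supported idempotents. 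Setting $\Nhat = \prod_F V_F$ (automatically complete, since $\Nhat \cong \prod_F e_F \Nhat$), the gluing datum $\alpha: V \lra \cIi \Nhat$ is precisely the structure map defining an object $[\Nhat \lla V]$ of $\cAhat(\fX)$.

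The remaining three equivalences are essentially at hand. The link $\cAhat(\fX) \simeq \cA(\fX)$ is Lemma \ref{lem:AisAhat}. The link $\cA(\fX) \simeq \cAhatshv(\fX)$ follows by unpacking definitions via Lemma \ref{lem:shvF}: an $\mcR_0$-module $\cN$ corresponds to a family $\{V_F\}_F$ with $\cN_F = V_F$ and $\cN_G = \cIi \prod V_F$, while $\cV$ corresponds to a vector space $V$ and $(\mcR_{10}\tensor\cV)_G = t \tensor V$. Setting $N = \prod_F V_F$, the ``isomorphism-on-stalk-at-$G$'' condition of $\cAhatshv(\fX)$ becomes $\cIi N \cong t \tensor V$, which is exactly the cocartesian condition defining $\cA(\fX)$. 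Finally, $\shv/\fX \simeq \Gamma(\Q_\fX)\modules$ is the unlabelled lemma at the end of Section \ref{sec:shvadelic} applied to $\fY = \fX$, which is compact as a one-point compactification: both the unit and counit of the $e \dashv \Gamma$ adjunction are isomorphisms, yielding an abelian-level equivalence.

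The main potential obstacle is aligning the various compatibility conditions --- cocartesian, iso-on-stalk, a mere map $V \lra \cIi \Nhat$, and the recollement gluing $\alpha$ --- and verifying that the translations between them are functorial and mutually inverse. The key conceptual point is reconciling the ``isomorphism'' formulations in $\cA(\fX)$ and $\cAhatshv(\fX)$ with the ``mere map'' formulation in $\cAhat(\fX)$ and in the recollement description of $\shv/\fX$; this is exactly the content of Lemma \ref{lem:AisAhat}, whose pullback construction converts map data into iso data and back. Everything remains at the abelian level: surjectivity of the localization $R_0 \twoheadrightarrow R_{10}$ and its sheaf counterpart $\mcR_0 \twoheadrightarrow \mcR_{10}$, together with the 1-dimensional Cantor--Bendixson structure, collapses the cellularization machinery of Section \ref{sec:shvadelic} to strict pullbacks and eliminates any need for derived-category corrections.
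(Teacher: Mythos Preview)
Your approach is essentially the paper's. The paper argues $\shv/\fX \simeq \cAhatshv(\fX)$ directly via the germ-spreading description (which is your recollement argument, and indeed the data it writes down---stalks plus a map $V \lra \cIi\prod_F\cN_F$---is exactly your $\cAhat(\fX)$), then gives an explicit idempotent-based reconstruction of the germ-spreading map for $\shv/\fX \simeq \Gamma(\Q_\fX)\modules$ rather than invoking the Section~\ref{sec:shvadelic} lemma, and finally cites Lemma~\ref{lem:AisAhat} for the link to $\cA(\fX)$; the chain is ordered differently but the ingredients are identical.
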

  \begin{proof}
The equivalence of the first two is the classical fact that a sheaf is specified by stalks 
together with germ-spreading data. Indeed, by Lemma \ref{lem:shvF}, an $\mcR_0$-module $\cN$ is 
determined by its stalks $\cN_F$. Thus a sheaf $\cG$ is specified by 
$V=\cG_G$ and $\cN_F=\cG_F$ and the germ-spreading map 
$$ V\lra \cIi \prod_F\cN_F. $$

The equivalence with modules is given by global sections $\shv/\fX\lra
\Gamma(\Q_{\fX})\modules$ in one direction and by the construction of
the germ spreading map from ring theoretic data in the other. Given a
module $M$ over $\Gamma(\Q_{\fX})$ we form the sheaf $\cG$ with stalks
$\cG_F=e_FM$ and $\cG_G=\colim_{U\ni G}e_UM$ and germ-spreading map
$$\cG_G=\colim_{U\ni G}e_UM\lra \colim_{U\ni G}e_U\prod_Fe_FM=
\colim_{U\ni G}e_U\prod_F\cG_F.$$

The final equivalence is Lemma \ref{lem:AisAhat}. 
    \end{proof}

\subsection{Homological algebra}
It is both instructive and reassuring to make the homological algebra
of sheaves explicit in this case.

  \begin{lemma}
The category of sheaves over $\fX$  has injective dimension 1. 
\end{lemma}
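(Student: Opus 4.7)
The plan is to show $\injdim(\shv/\fX) \leq 1$ and $\injdim(\shv/\fX) \geq 1$ separately, working directly in $\shv/\fX$ via the open/closed decomposition $i: \cF \hookrightarrow \fX \hookleftarrow \{G\} : j$.

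For the upper bound, I would produce, for each sheaf $\cG$, an explicit injective resolution of length $1$ via the unit map
$$0 \to \cG \to i_*i^*\cG \oplus j_*j^*\cG \to \cQ \to 0.$$
Both summands are injective: $i^*$ and $j^*$ are exact, so $i_*$ and $j_*$ preserve injectives; moreover every $\Q$-module is injective, so $i^*\cG$ (a family of $\Q$-vector spaces on the discrete space $\cF$) is injective in $\shv/\cF$, and $j^*\cG = \cG_G$ is an injective $\Q$-module. The unit is a stalkwise injection (an iso on $\cF$ via the first summand, an iso at $G$ via the second), and the cokernel $\cQ$ therefore satisfies $\cQ_F = 0$ for each $F \in \cF$ and $\cQ_G \cong (i_*i^*\cG)_G$. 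Since $\{G\}$ is closed, any sheaf supported on $\{G\}$ is of the form $j_*W$, so $\cQ = j_*(i_*i^*\cG)_G$, which is again injective.

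For the lower bound, I would reuse the non-split short exact sequence
$$0 \to \bigoplus_F i^F_*\Q_F \to \Q_\fX \to j_*\Q_G \to 0$$
already recorded. To see it does not split, I would show there is no sheaf section $s: j_*\Q_G \to \Q_\fX$: the element $1 \in \Gamma(\fX, j_*\Q_G) = \Q$ would have to be sent to a locally constant function on $\fX$ taking value $1$ at $G$, hence to $1$ on some cofinite neighbourhood of $G$; but naturality against the restriction $j_*\Q_G(\fX) = \Q \to j_*\Q_G(U) = 0$ for $U \not\ni G$ would then force this section to vanish on all of $\fX \setminus \{G\}$, a contradiction. So $\Ext^1(j_*\Q_G, \bigoplus_F i^F_*\Q_F) \neq 0$ and $\injdim \geq 1$.

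The main point is not so much an obstacle as a pleasant simplification: the whole argument rests on the facts that every $\Q$-module is injective and that $\{G\}$ is closed with discrete complement (so sheaves supported at $G$ are automatically $j_*W$). This is very special to the rank-$1$ case; in higher rank the unit map $\cG \to i_*i^*\cG \oplus j_*j^*\cG$ must be replaced by the full cube of localisations/completions developed in Sections \ref{sec:ringcubes} and \ref{sec:shvcubes}, and the length of the resolution grows accordingly.
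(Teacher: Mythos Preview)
Your proof is correct and essentially follows the paper's approach. The middle term $i_*i^*\cG \oplus j_*j^*\cG$ is exactly the paper's $\prod_{x\in\fD_0}\cG_x \oplus \cG_G$ (a product of skyscrapers), just packaged via adjunction units rather than stalks; your cokernel argument and the paper's ``$\cG'$ is concentrated at $G$'' are the same observation. For the lower bound the paper instead notes that its own resolution fails to split when $\cG=\Q_\fX$, whereas you use the sequence $0\to\bigoplus_F i^F_*\Q_F\to\Q_\fX\to j_*\Q_G\to 0$ (already recorded in the paper) and supply the explicit non-splitting argument---a harmless and arguably cleaner variant.
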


\begin{proof}
  Skyscraper sheaves are injective. For a general sheaf $\cG$
  we may construct an injective resolution
  $$0\lra \cG \lra \cG_G \oplus \prod_{x\in \fD_0} \cG_x\lra \cG'\lra
  0$$
  and $\cG'$ is concentrated at $G$ and it is therefore
  injective.

  This sequence does not split if $\cG$ is the constant sheaf at $\Q$
  and therefore not all sheaves are injective.
\end{proof}

\begin{lemma}
  All sheaves are soft. 
  \end{lemma}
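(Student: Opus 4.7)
The plan is to give the explicit rank-one version of the argument in Lemma \ref{lem:soft}, exploiting the very simple topology of $\fX = \cF \cup \{G\}$. The goal is to show that for any sheaf $\cG$ of $\Q$-modules on $\fX$ and any closed subset $K \subseteq \fX$, the restriction map $\cG(\fX) \lra \cG(K)$ is surjective. I would begin by classifying the closed subsets of $\fX$. Since $\cF$ is discrete and open, and the open neighbourhoods of $G$ are exactly the cofinite subsets of $\cF$ together with $G$, the closed subsets of $\fX$ split into two cases: finite subsets $F \subseteq \cF$, and subsets of the form $\{G\} \cup S$ for arbitrary $S \subseteq \cF$.

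In the first case, $F$ is actually clopen, and $\fX = F \sqcup F^c$ is a disjoint union of clopen sets, so the sheaf axiom yields $\cG(\fX) = \cG(F) \oplus \cG(F^c)$ and every section extends trivially. More generally, the same splitting shows that whenever $U \subseteq V$ are clopen subsets of $\fX$ with $V \setminus U$ finite (hence clopen), the restriction $\cG(V) \lra \cG(U)$ is split surjective.

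The substantive case is $K = \{G\} \cup S$. Here I would choose an exhaustion of the complement $\cF \setminus S$ by an increasing sequence of finite sets $F_1 \subseteq F_2 \subseteq \cdots$ with $\bigcup_n F_n = \cF \setminus S$, and set $U_n := \{G\} \cup S \cup (\cF \setminus F_n)$. Each $U_n$ is a clopen neighbourhood of $K$, the $U_n$ form a decreasing sequence with $\bigcap_n U_n = K$, and the successive complements $U_n \setminus U_{n+1} = F_{n+1} \setminus F_n$ are finite. By the previous paragraph every restriction $\cG(U_n) \lra \cG(U_{n+1})$ is surjective, and $\cG(\fX) \lra \cG(U_1)$ is surjective because $U_1$ is clopen. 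Combined with the standard identification $\cG(K) = \colim_n \cG(U_n)$ for a decreasing intersection of opens, every section over $K$ is represented by a section over some $U_N$ and therefore pulls back from a global section.

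I do not anticipate a real obstacle: the argument is essentially a specialization of Lemma \ref{lem:soft}, and the only small care needed is in verifying the identity $\cG(K) = \colim_n \cG(U_n)$, which is a standard feature of sheaves on compact Hausdorff spaces (or follows directly from \cite[II.9.11]{BredonSheaf}, as cited earlier). Acyclicity of all sheaves then follows from the same reference, recovering the stated softness.
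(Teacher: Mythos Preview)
Your proof is correct and follows essentially the same strategy as the paper: use the clopen splitting $\cG(\fX)=\cG(U)\oplus\cG(U^c)$ for clopen $U$, and write the remaining closed sets as decreasing intersections of clopens so that $\cG(K)=\colim_n\cG(U_n)$ is a colimit of surjections. In fact your case analysis is more careful than the paper's: the paper's 1-dimensional proof lists the closed sets as finite subsets of $\fD_0$, complements of finite subsets, or the single limit point, whereas every set of the form $\{G\}\cup S$ with $S\subseteq\cF$ arbitrary is closed; your exhaustion argument handles precisely these missing cases.
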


  \begin{proof}
Any closed set is either a finite set of points in $\fD_0$, the
complement of a finite set of points of $\fD_0$  or else the
limit point. If  $A\subseteq \fD_0$ is finite, then both $A$ and $A^c$ are open
and closed. Accordingly, $\cF (\fX)=\cF(A)\oplus \cF(A^c)$ and any
section over $A$ or $A^c$ extends over $\fX$.

The limit point is the intersection of clopen sets $U$ and so 
$$\cF(G)=\colim_U \cF(U)$$
is the direct limit of surjections. 
\end{proof}

\part{Component structures and equivariant sheaves}
\section{Component structures on Stone spaces}
\label{sec:componstone}
The structures arising from rational $G$-spectra consist of
{\em equivariant} sheaves. In Part 1 we described sheaves over Stone spaces
and in Part 2 we describe the equivariance. In effect this consists of
the action of a finite group on each stalk of the sheaf.

\subsection{Component structures}
The notion of 
component structure codifies the structure required of this collection 
of finite groups $\cW_x$.

\begin{defn}
  For a Stone space $\fX$, a {\em component structure} consists of a finite
  group $\cW_x$ for each point $x\in \fX$, and for each $x$ there is
  a clopen neighbourhood $U_x$ of $x$ with a homomorphism $i^x_y: \cW_y\lra
  \cW_x$ for all $y\in U_x$. These are required to be transitive in
  the sense that if $z\in U_y\cap U_x$ and $y\in U_x$, we have
  $i_z^x=i_y^xi_z^y$. 
  \end{defn}

  \begin{example}
    If $H$ is a compact Lie group the space $\Phi H$ of conjugacy
    classes of closed subgroups with finite Weyl group is a Stone
    space, as is any compact subset.

    By the upper semicontinuity of normalizers \cite[10.3]{AVmodel},
    every subgroup $K\in \Phi H$ has a neighbourhood $U_K$ so that
    if $L\subseteq K$ and
    $(L)\in U_K$ then $N_G(L)\subseteq N_G(K)$.

    We take the component structure in which $\cW_K=\pi_0(W_G(K))$. The
    component structure is given by using the neighbourhoods $U_K$ and
    the inclusion induces a map
    $$ N_G(L)/L\lra N_G(K)/L\lra N_G(K)/K,  $$
    and applying $\pi_0$ we obtain the component structure. This is independent of choice of
    representatives; indeed, by the Montgomery-Zippin Theorem two
    representatives of the conjugacy class $L$ lying in $K$ will
    be conjugate by an element of $N_G(K)$. This means the map
    $$\cW_L=N_G(L)/L\lra N_G(K)/L\lra N_G(K)/K=\cW_K$$
    depends only on the conjugacy classes $(L)$ and $(K)$.
    
    Since the maps are induced by inclusions of normalizers, the
    transitivity condition is automatic. 
  \end{example}

  When component structures are used for sheaves of rational vector
  spaces we may package the relevant structure in a more familiar way,
  making sense of the idea that 
$$\cW=\coprod_x \cW_x\lra \fX$$ 
has local properties  dual to that of an \'etale space.

  \begin{lemma}
A component structure $\cW$ on $\fX$ gives rise to a sheaf of group 
rings $\Q\cW$ on $\fX$. The stalk over $x$ is $\Q\cW_x$ and the 
germ-spreading map 
$$\sigma: \Q\cW_x\lra \cIi_{x|U_x}\prod_{y\in U_x}\Q\cW_y$$
has components
$$\Q \cW_x\lra \Q\overline{\cW_y}\cong (\Q\cW_y)^{K(x,y)}\lra
\Q\cW_y, $$
where the first map is induced by restriction to the image
$\overline{\cW_y}=\im (\cW_y\lra \cW_x)$ and
$K(x,y)=\ker (\cW_y\lra \cW_x)$.

\end{lemma}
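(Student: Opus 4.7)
The plan is to apply the framework from Section~\ref{sec:shvadelic}, under which a sheaf of $\Q$-modules on a Stone space is specified by its stalks together with germ-spreading maps $\sigma_x:\cF_x\to \cI^{-1}_{x|U_x}\prod_{y\in U_x}\cF_y$ satisfying a coherence axiom. I would take stalks $\Q\cW_x$, define $\sigma_x$ componentwise by the three-step composition in the statement, and verify coherence.

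Unpacking the three constituents: the first map $\Q\cW_x\twoheadrightarrow \Q\overline{\cW_y}$ is the $\Q$-linear projection sending a basis element $w\in \cW_x$ to itself when $w\in \overline{\cW_y}=\im(i_y^x)$ and to $0$ otherwise; the isomorphism $\Q\overline{\cW_y}\cong (\Q\cW_y)^{K(x,y)}$ is realised by the $K(x,y)$-averaging $\overline{w}\mapsto \frac{1}{|K(x,y)|}\sum_{k\in K(x,y)}kw$, well-defined rationally because $|K(x,y)|$ is invertible; and $(\Q\cW_y)^{K(x,y)}\hookrightarrow \Q\cW_y$ is the inclusion of invariants. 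Finiteness of each $\cW_y$ together with local constancy of the image/kernel data (as $y$ varies through $U_x$) ensures that $\sigma_x$ really lands in the cofinite-idempotent localization.

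The substantive step is the coherence verification: for $z\in U_y\cap U_x$ with $y\in U_x$, the direct $z$-component of $\sigma_x$ must agree with the $z$-component of $\sigma_y$ precomposed with the $y$-component of $\sigma_x$. The component-structure axiom $i^x_z=i^x_y\,i^y_z$ yields a nested chain of image subgroups $\overline{\cW_z}\subseteq \overline{\cW_y}\subseteq \cW_x$ and a corresponding short exact sequence $1\to K(y,z)\to K(x,z)\to K(x,y)\cap \overline{\cW_z}^y\to 1$ of kernels, where $\overline{\cW_z}^y$ denotes the image of $\cW_z$ in $\cW_y$.

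The main obstacle I anticipate is reconciling the projection and averaging steps across this tower: one must check that the $K(x,z)$-averaging idempotent in $\Q\cW_z$ factorises as the $K(y,z)$-averaging followed by a lift of the $K(x,y)$-averaging (restricted to the relevant invariant subspace), and that the one-step restriction to $\overline{\cW_z}\subseteq \cW_x$ coincides with the two-step restriction via $\overline{\cW_y}$. Both are routine identities about idempotents in rational group algebras of nested subgroups, once the kernel tower is made explicit. With coherence in hand, Section~\ref{sec:shvadelic} produces the sheaf of $\Q$-modules, and the stalkwise group multiplication gives it the promised structure of a sheaf of group rings.
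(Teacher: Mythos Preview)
The paper states this lemma without proof—the surrounding text simply moves on to the definition of $\cW$-equivariant sheaves—so there is no argument to compare against. Your plan (specify stalks, define the germ-spreading componentwise, and verify coherence from the transitivity axiom $i^x_z=i^x_y\,i^y_z$) is the natural way to supply one, and it is fully in keeping with the \'etale-space viewpoint the paper adopts elsewhere (cf.\ Section~\ref{sec:dim1}).

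One caution for when you write it out: the composite $\Q\cW_x\to\Q\cW_y$ you describe sends $1$ to the averaging idempotent $e_{K(x,y)}=\tfrac{1}{|K(x,y)|}\sum_{k}k$ rather than to $1$, and the projection step $\Q\cW_x\to\Q\overline{\cW_y}$ is not multiplicative unless $i^x_y$ is surjective. Hence your closing sentence, that ``stalkwise group multiplication gives it the promised structure of a sheaf of group rings,'' needs an extra word of justification: either argue that $i^x_y$ is surjective for $y$ sufficiently close to $x$ (as holds in the motivating Lie-group example), or be explicit about the sense in which $\Q\cW$ carries its ring structure. The paper's later use of $\Q_{\fX}[\cW]$-modules (Subsection~\ref{subsec:equiv} and Lemma~\ref{lem:RcWfXPB}) indicates this is intended to work, but the lemma as stated leaves the point implicit.
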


We have given four equivalent abelian categories: the standard model,
the category of sheaves over $\fX$ (codified as \etale\ spaces over
$\fX$), the standard model $\cAhatshv(\fX)$ in sheaves,
and the category of modules over $\RfX$.

Given a component structure $\cW$ on $\fX$ we will describe a category
of $\cW$-equivariant objects in each of these cases. Finally, we will observe that the
three categories are equivalent. 

\begin{thm}
  \label{thm:cWshvscatStone}
Suppose $\fX$ is a Stone space of finite Cantor-Bendixson rank with
component structure $\cW$,  the  following three abelian categories are equivalent 
\begin{itemize}
\item the category $\cW\!-\!\shv/\fX$ of  sheaves of 
$\Q$-modules. 
\item the category of modules over the ring $\Gamma (\Q_{\fX}[\cW] )$
  of global sections of the sheaf of Weyl algebras
\item the standard abelian category $\cA (\fX, \cW)$ 
\end{itemize}
 The injective dimension of the three abelian categories is equal to the 
Cantor-Bendixson rank of $\fX$. 
\end{thm}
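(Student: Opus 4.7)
The plan is to parallel the non-equivariant arguments of Part 1 (summarised in the diagram of Remark \ref{rem:adjequ}) by tensoring every ring or sheaf of rings with the sheaf of Weyl group rings $\Q_{\fX}[\cW]$. The first step is to construct the equivariant cube of sheaves $\mcR_A[\cW] := \mcR_A \otimes_{\Q_{\fX}} \Q_{\fX}[\cW]$ and observe that it remains soft (automatic from Lemma \ref{lem:soft}) and acyclic. Acyclicity of the augmented cube is stalkwise: the analysis in Lemma \ref{lem:acycliccube} shows that at $x \in \fD_a$ only $\mcR_A$ with $A \subseteq [a]$ contributes nontrivially, and tensoring with the constant stalk $\Q\cW_x$ preserves the computation. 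Corollary \ref{cor:hopullcube} then lifts to give $\Gamma(\Q_{\fX}[\cW]) \simeq \holim_{A \neq \emptyset} R_A[\cW]$, where $R_A[\cW] := \Gamma(\mcR_A[\cW])$.

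With this equivariant cube in hand, I would define $\cA(\fX, \cW)$ as the category of cocartesian modules over the punctured cube $R_\bullet[\cW]$, with a sheaf analogue $\cAhatshv(\fX, \cW)$ over $\mcR_\bullet[\cW]$. The arguments of Section \ref{sec:shvadelic} apply formally: the diagrammatic injective model structure on $R_\bullet[\cW]$-modules combined with the Cellularization Principle of \cite{diagrams} and the Cellular Skeleton Theorem give Quillen equivalences $\Gamma(\Q_{\fX}[\cW])\modules \simeq DG-\cA(\fX,\cW)$ on the algebraic side, and similarly $\cW\!-\!\shv/\fX \simeq DG-\cAhatshv(\fX, \cW)$ on the sheaf side. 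Identifying the algebraic and sheaf-theoretic standard models then proceeds as in Section \ref{sec:shvadelic}, because the relevant reconstruction and germ-spreading arguments rely only on the idempotent structure in $R_0[\cW]$, which is inherited unchanged from the non-equivariant case.

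The main obstacle is in the cellular skeleton step, where I must identify generators of $\cA(\fX, \cW)$ and verify that the counit is an equivalence on injective cogenerators. In the non-equivariant case the cogenerators correspond to constant sheaves on pure strata; equivariantly, they will be indexed by pairs $(x, V)$ with $x$ in a pure stratum $\fD_i$ and $V$ an injective $\Q[\cW_x]$-module, spread via the germ-spreading map $\sigma : \Q\cW_x \to \cIi_{x\mid U_x}\prod_y \Q\cW_y$. The delicacy is that this map is typically neither injective nor surjective, because the structure maps $\cW_y \to \cW_x$ can have nontrivial kernels and images; controlling the continuity this imposes on generators is precisely the content of Section \ref{sec:gens}. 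For the injective dimension statement, Maschke's theorem gives that each $\Q[\cW_x]$ is semisimple, so stalkwise averaging promotes any non-equivariant injective resolution to a $\cW$-equivariant one of the same length; combined with Theorem \ref{thm:shvscatStone}, this gives the claimed equality with the Cantor-Bendixson rank.
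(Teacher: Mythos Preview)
Your proposal is correct and follows essentially the same approach as the paper. The one notable difference is economy: the paper (Subsection~\ref{subsec:equiv}) does not rerun the cellularization and skeleton arguments with $R_\bullet[\cW]$ in place of $R_\bullet$, but instead observes that each of the functors in the non-equivariant square of Remark~\ref{rem:adjequ} already respects the stalkwise Weyl group actions, and hence restricts to an equivalence between the $\cW$-equivariant subcategories; your route reproves each step from scratch, which is valid but more laborious. Your treatment of the injective dimension via stalkwise Maschke averaging matches the paper's exactly, and your identification of the delicacy around generators and the germ-spreading maps is precisely what the paper addresses in Section~\ref{sec:gens} (though the paper works with the single generator $R_\bullet[\cW]$ rather than the family of cogenerators you describe).
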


  \subsection{Equivariant sheaves}
As usual we may define sheaves in two ways. Either as presheaves with
patching condition or as sections of \etale\ spaces. The equivalence
of categories is given by taking sections of \etale\ spaces over an
open set, and by defining a topology on the germs by using
representatives. We want to
consider equivariant sheaves, so it is simpler to take the
\etale\ space version.

\begin{defn}
  If $\cW$ is a component structure on a topological space $\fX$, a 
  $\cW$-sheaf on $\fX$ is an \etale\ space $Y$ over $X$ with an action of 
  $\cW_x$ on $Y_x$ so that the germ-spreading map 
  $$\sigma: Y_x\lra \colim_{U\ni x}\prod_{y\in U}Y_y$$
  is equivariant in the sense that if $\sigma(a)=[(\sigma_y(a))]$ for 
  $a\in Y_x$ then
  $$\sigma_y(i^x_y(g_y)a)=g_y\sigma_y(a) $$
  for $g_y\in \cW_y$.
  \end{defn}

\begin{lemma}
  Morphisms in the category $\cWshv/\fX\lra \shv/\fX$ of $\cW$-sheaves of
  $\Q$-vector spaces on $\fX$ are maps of sheaves that commute with
  the Weyl actions. 

  The category  is equivalent to the category of sheaves of modules over the
  sheaf of Weyl rings $\Q [\cW]$.
\end{lemma}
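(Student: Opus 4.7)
The first part of the lemma is immediate from the definition of morphisms in $\cWshv/\fX$: a map of $\cW$-sheaves is a map of the underlying \'etale spaces (equivalently, of sheaves of $\Q$-vector spaces) that is $\cW_x$-equivariant on each stalk.

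For the equivalence, my plan is to build mutually inverse functors stalkwise. Given a $\cW$-sheaf $Y$ with underlying sheaf $\cG$ of $\Q$-vector spaces, the $\cW_x$-actions on the stalks $\cG_x = Y_x$ promote each stalk to a module over $(\Q[\cW])_x = \Q\cW_x$; these stalkwise module structures should glue into a candidate sheaf-level action map $\Q[\cW] \otimes \cG \to \cG$. In the reverse direction, a sheaf $\cM$ of $\Q[\cW]$-modules restricts on stalks to $\Q\cW_x$-modules, hence $\cW_x$-representations, yielding a $\cW$-sheaf structure on its underlying sheaf of $\Q$-vector spaces. That morphisms correspond under these assignments is formal, because $\cW_x$-equivariance on a stalk is literally the same data as $\Q\cW_x$-linearity; the substantive point is well-definedness of the objects.

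Well-definedness in both directions reduces to the same stalkwise verification: the $\cW$-sheaf germ-spreading condition $\sigma_y(i^x_y(h_y) a) = h_y \sigma_y(a)$ is precisely the compatibility of the stalkwise actions with the germ-spreading on $\Q[\cW]$ given by the factorization
\[
\Q\cW_x \to \Q\overline{\cW_y} \cong (\Q\cW_y)^{K(x,y)} \hookrightarrow \Q\cW_y
\]
from the preceding lemma. Concretely, the image in $\Q\cW_y$ of $g \in \cW_x$ after germ-spreading is supported on the fiber $(i^x_y)^{-1}(g) \subseteq \cW_y$ (vanishing if $g \notin \overline{\cW_y}$), and the $\cW$-sheaf equivariance condition, extended by $\Q$-linearity, is exactly what is required for the stalkwise actions to assemble into a sheaf-of-rings action.

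The main obstacle is to correctly pair the direction of the component-structure map $i^x_y \colon \cW_y \to \cW_x$ with the germ-spreading $\Q\cW_x \to \Q\cW_y$ going the other way, which exists only after passage to group algebras via the kernel-fixed-point isomorphism $\Q\overline{\cW_y} \cong (\Q\cW_y)^{K(x,y)}$. Once this bookkeeping is in hand, the two functors will manifestly be inverse equivalences of categories.
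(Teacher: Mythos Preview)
Your proposal is correct and captures the essential content. The paper's proof is terser and frames the one nontrivial point differently: rather than checking stalkwise compatibility of germ-spreading maps as you do, it works directly in the \'etale-space picture and verifies that the action map $\Q_{\fX}[\cW]\times Y \to Y$ is continuous, by exhibiting the preimage of a local section $\sigma$ as a union of open sets $(g,\sigma)=\{g_y^{-1}\sigma(u) : u\in U\}$. Your germ-spreading compatibility check and the paper's continuity check are two packagings of the same verification; your version has the advantage of making explicit the role of the factorization $\Q\cW_x \to \Q\overline{\cW_y}\cong (\Q\cW_y)^{K(x,y)}\hookrightarrow \Q\cW_y$ from the preceding lemma, while the paper's version is shorter and more geometric.
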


\begin{proof}
We need only verify that the stalkwise equivariance condition
automatically gives a map $\Q_{\fX}[\cW]\times Y \lra Y$ of \'etale
spaces. In other words we must show it is continuous. If $\sigma $ is
a local section of $Y$ over $U\subseteq \fX$, the inverse image is a union of sets 
$$(g,\sigma):=\{ g_y^{-1}\sigma(u)\st u\in U\}, $$
where $g$ is defined on a neighbourhood of $x$, and each of the sets
$(g,\sigma)$ is open. 
\end{proof}

This also gives sense to the standard model $\cAhatshv(\fX, \cW)$. The
sheaves $\mcR_A$ are all equivariant sheaves (with the trivial action)
and we take modules in the category of equivariant sheaves.

\subsection{Equivariant modules over $\RfX$}

For categories of modules, the definition is straightforward. 
We have $\RfX=\Gamma(\Q_{\fX})$ and if $\cW$ is a component
structure on $\fX$,  we take $\RcWfX=\Gamma(\Q_{\fX}[\cW])$. 
By definition, a $\cW$-equivariant $\RfX$ module is an
$\RcWfX$-module.

Fortunately, we have already established the framework for dealing
wiith the delicate matters. 

\begin{lemma}
  \label{lem:RcWfXPB}
$\RcWfX$ itself is the pullback and  homotopy pullback of the diagram 
$R_A[\cW]$ of rings. 
\end{lemma}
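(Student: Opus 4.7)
The plan is to bootstrap directly off the argument of Corollary \ref{cor:hopullcube}. The Weyl action should be viewed as extra structure that rides along on sheaves of $\Q$-vector spaces we have already analyzed, so the homotopical content is unchanged.

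First I would define a cube of sheaves $\mcR_A[\cW]$ on $\fX$ in parallel to $\mcR_A$: take $\mcR_\emptyset[\cW] = \Q_{\fX}[\cW]$, and for $A=\{a_0>\cdots>a_s\}$ iterate the pushforward–restriction construction starting from the sheaf of Weyl rings $\Q_{a_s}[\cW]$ on $\fD_{a_s}$, so that
$$\mcR_A[\cW] = i^{a_0}_* i^*_{a_0}\cdots i^{a_{s-1}}_* i^*_{a_{s-1}} i^{a_s}_* \Q_{a_s}[\cW].$$
The stalk at $x\in\fD_{a_s}$ is $\Q\cW_x$, and the germ-spreading maps are given by the component-structure homomorphisms, exactly as in the definition of the sheaf $\Q_{\fX}[\cW]$. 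The counterpart of Lemma \ref{lem:GammaRAisRA} then holds by the same reasoning: $\Gamma(\mcR_A[\cW]) = R_A[\cW]$, and the cube maps $d^b$ correspond under $\Gamma$.

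Next I would verify the analog of Lemma \ref{lem:acycliccube}: the cube of sheaves $\mcR_\bullet[\cW]$ is acyclic, equivalently
$$\Q_{\fX}[\cW] \simeq \holim_{A\neq\emptyset} \mcR_A[\cW].$$
This reduces to a stalkwise check just as before. At a point $x\in\fD_a$, the only nonzero stalks $(\mcR_A[\cW])_x$ occur for $A\subseteq[a]$, and adjoining $a$ to $A\not\ni a$ gives an isomorphism on stalks, so the punctured cube collapses to the single edge $\emptyset\to\{a\}$, which on stalks is the identity $\Q\cW_x\to \Q\cW_x$.

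Now I would pass to global sections. The sheaves $\mcR_A[\cW]$ and $\Q_{\fX}[\cW]$ are sheaves of $\Q$-vector spaces on the Stone space $\fX$, so Lemma \ref{lem:soft} applies verbatim: they are soft and acyclic. The spectral sequence computing the cohomology of the homotopy pullback therefore collapses to the complex of global sections, giving
$$\Gamma(\Q_{\fX}[\cW]) \simeq \holim_{A\neq\emptyset} R_A[\cW].$$
Since this homotopy limit agrees with the strict limit (again by acyclicity/softness of the cube, exactly as in Corollary \ref{cor:hopullcube}), $\RcWfX$ is both the pullback and the homotopy pullback of the diagram $R_\bullet[\cW]$.

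The only subtle point is checking that the softness argument is genuinely unaffected by the Weyl structure; since softness is a property of the underlying sheaf of $\Q$-modules on $\fX$ and the proof in Lemma \ref{lem:soft} uses only that clopen sets form a basis, there is nothing extra to verify — this is where the whole argument is clean rather than a delicate new computation.
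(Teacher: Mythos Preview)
Your proposal is correct and follows essentially the same route as the paper: define the sheaf cube $\mcR_A[\cW]$ as in Subsection~\ref{subsec:lifttoshv}, verify it is a (homotopy) pullback stalkwise, invoke softness (Lemma~\ref{lem:soft}) to pass to global sections, and identify $\Gamma(\mcR_A[\cW])=R_A[\cW]$ as in Lemma~\ref{lem:GammaRAisRA}. You have simply spelled out the intermediate steps more explicitly than the paper does.
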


\begin{proof}
  It suffices to prove the statement for modules, since the maps
  in the diagram are ring maps.

 We fall back to a sheaf theoretic argument. We see that the sheaf
 $\mcR[\cW]=\Q_{\fX}[\cW]$ is the pullback and homotopy pullback of the the
 sheaves $\mcR_A[\cW]$ defined as in Subsection \ref{subsec:lifttoshv}. We now take global sections and use the fact
 that sheaves are soft to obtain a pullback and homotopy pullback of
 global sections $\Gamma (\mcR_A[\cW])$.

 Finally, we calculate
 $$\Gamma (\mcR_A[\cW])=R_A[\cW] $$
as in Lemma \ref{lem:GammaRAisRA}. 
\end{proof}

\begin{remark}
  An alternative approach is to start from
  the fact that $\RfX$ is the homotopy pullback of the
  rings $R_A$ (Lemma \ref{lem:ringcubeexact}), and take the tensor product with $\RcWfX$.
  Since $\RcWfX$ is an augmented $\RfX$-algebra, it follows
  that $\RcWfX$ is a projective $\RfX$ module. Completing this
  argument would require an argument that $R_A\tensor_{\RfX}\RcWfX=R_A[\cW]$.
\end{remark}

\subsection{The equivariant standard model}
When $A=\{a_0>\cdots >a_s\}$, the ring $R_A$ has idempotents $e_{H_s}$
for each $H_s\in \fD_{a_s}$. A $\cW$-equivariant $R_A$-module $M$ is an
$R_A$-module with actions of $\cW_{H_s}$ on $e_{H_s}M$.  A
$\cW$-equivariant $R_{\bullet}$-module is an $R_{\bullet}$-module $M$ so
that each $M_A $ is a $\cW$-equivariant $R_A$-module, and so that if
$B=\{a_0>\cdots >a_s>b\}$, the  map
$$M_A\lra M_B$$
has the property that
$$e_{H_s}M_A\lra e_{H_{s+1}}M_{B}$$
is equivariant along the map $\cW_{H_{s+1}}\lra \cW_{H_s}$
when $H_{s+1}$ is close enough to $H_s$.

\subsection{Equivalences}
\label{subsec:equiv}
We consider the diagram of adjoint equivalences in Remark
\ref{rem:adjequ}. We note that the functors (but not the equivalences)
extend to  the entire category of modules over 
the diagrams $R_{\bullet}$ (of rings) or $\mcR_{\bullet}$ (of sheaves 
of rings). In all cases the functors respect the actions of the Weyl
groups.  We therefore obtain another diagram of left adjoint equivalences 
      $$\xymatrix{
  &\Gamma  (\Q_{\fX}[\cW] )\mbox{-mod}\rto^{\simeq}\dto_{\simeq}&\cA
  (\fX, \cW) \dto^{\simeq}\\
 \cW\!-\! \shv/\fX\ar@{=}[r]
  &\Q_{\fX}[\cW]\mbox{-mod}\rto^{\simeq}&\cAhatshv (\fX, \cW) 
}$$

\subsection{Injective dimension}
We note here that by the usual averaging argument, equivariance does
not change the injective dimension of these abelian categories. 

\begin{prop}
The abelian category $\cW\!-\!\shv/\fX$ of equivariant sheaves on
$\fX$ is of injective dimension equal to the Cantor-Bendixson rank of $\fX$. 
\end{prop}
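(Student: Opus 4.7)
The plan is to reduce to the non-equivariant injective dimension computation of Theorem~\ref{thm:shvscatStone} by a stalkwise averaging (Maschke) argument, exploiting that each Weyl group $\cW_x$ is finite and we work rationally. The forgetful functor $U\colon \cWshv/\fX \to \shv/\fX$ is exact, and sits in adjunctions with both an induction $\mathrm{Ind}(\cV)=\Q_{\fX}[\cW]\tensor_{\Q_{\fX}}\cV$ (its left adjoint) and a coinduction $\mathrm{Coind}(\cV)=\Hom_{\Q_{\fX}}(\Q_{\fX}[\cW],\cV)$ (its right adjoint). Both $\mathrm{Ind}$ and $\mathrm{Coind}$ are themselves exact: at each stalk $x$, the algebra $\Q\cW_x$ is finitely generated projective over $\Q$, so stalkwise $\mathrm{Ind}$ is tensoring with a flat module and $\mathrm{Coind}$ is $\Hom$ from a finitely generated projective. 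In particular $U$ preserves injectives (its left adjoint $\mathrm{Ind}$ is exact) and $\mathrm{Coind}$ preserves injectives (its left adjoint $U$ is exact).

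For the upper bound, given an equivariant sheaf $M$, I would choose a non-equivariant injective resolution $0\to U(M)\to I^0\to \cdots \to I^d\to 0$ of length $d=$ Cantor--Bendixson rank of $\fX$ and apply $\mathrm{Coind}$ to obtain an equivariant injective resolution of $\mathrm{Coind}(U(M))$. Stalkwise Maschke averaging $\phi\mapsto \frac{1}{|\cW_x|}\sum_{g\in\cW_x}g^{-1}\phi(g)$ provides a $\cW$-equivariant retraction of the unit $M\to \mathrm{Coind}(U(M))$, exhibiting $M$ as a $\cW$-equivariant summand and so yielding an injective resolution of $M$ of length at most $d$. For the reverse inequality, I would take a non-equivariant sheaf $\cF$ realising the non-equivariant injective dimension $d$, regard it as a $\cW$-sheaf with trivial action (via the augmentation $\Q_{\fX}[\cW]\to \Q_{\fX}$), and apply $U$ to any equivariant injective resolution; since $U$ preserves injectives and is exact, this produces a non-equivariant injective resolution of $\cF$, forcing $d\leq \injdim(\cWshv/\fX)$.

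The main obstacle I expect is checking that these stalkwise constructions are globally coherent despite the non-constant component structure: the sheaf $\Q_{\fX}[\cW]$ is not locally free over $\Q_{\fX}$, and the averaging factor $1/|\cW_x|$ assembles into a locally constant function on $\fX$ only through the data of the homomorphisms $\cW_y\to \cW_x$. The saving point is that exactness and retraction of short exact sequences of sheaves on the scattered Stone space $\fX$ can be verified stalkwise (by Lemma~\ref{lem:soft} all sheaves in sight are soft and acyclic), so the standard finite-group Maschke argument applies pointwise; a brief verification that the stalkwise averaging maps are compatible with the germ-spreading data defining the component structure completes the argument.
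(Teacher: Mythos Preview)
Your argument is correct and rests on the same Maschke-type averaging as the paper, but the organization differs. The paper begins with an \emph{equivariant} injective resolution $0\to M\to I_0\to I_1\to\cdots$, forgets the $\cW$-action to view it as a non-equivariant resolution (implicitly using that $U$ preserves injectives), observes that non-equivariantly the image of $I_n\to I_{n+1}$ splits off for $n$ at most the Cantor--Bendixson rank, and then averages the non-equivariant splitting $f:I_{n+1}\to I_n$ stalkwise via $\pi(f)(v,x)=\frac{1}{|\cW_x|}\sum_{w\in\cW_x}wf_x(w^{-1}v)$ to make it $\cW$-equivariant. You instead start from a short non-equivariant resolution of $U(M)$, coinduce it, and exhibit $M$ as an equivariant retract of $\mathrm{Coind}(U(M))$ by averaging the unit. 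Both approaches face and dispatch the same subtlety (that a stalkwise formula with varying factor $1/|\cW_x|$ assembles to a genuine sheaf morphism, which is where the component-structure compatibility enters); your more categorical packaging via the $(\mathrm{Ind},U,\mathrm{Coind})$ adjunctions has the mild advantage of making the lower bound explicit, which the paper's proof leaves unstated.
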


\begin{proof}
For any $\Q_{\fX}[\cW]$-module $M$, we may construct an injective
resolution $0\lra M\lra I_0\lra I_1\lra I_2\lra \cdots$. By
restriction we may view it as a complex of injective
$\Q_{\fX}$-modules, and as such it terminates in the sense that the
image of $I_n\lra I_{n+1}$ has a complement for some $n\leq \mathrm{CB-rank}(\fX)$.
Thus there is a map $f: I_{n+1}\lra I_n$ of sheaves that is the
identity on the image. 

We now apply the averaging map 
$$\pi : \Hom(I_{n+1}, I_n)\lra \Hom(I_{n+1}, I_n)^{\cW}$$
defined by 
$$\pi (f)((v,x))=\frac{1}{|\cW_x|}\sum_{w\in \cW_x}wf_x(w^{-1}v) $$
to obtain a $\cW$-equivariant map still the identity the image. 
\end{proof}

\section{Generators of equivariant module categories}
\label{sec:gens}
When considering a finite group $F$, it is valuable to know that
the category of $\Q [F]$-modules is generated by the object $\Q
[F]$-itself. If we consider the category of modules over a finite
product of such rings, perhaps viewed as the category of sheaves of
modules over a sheaf of rings, there is nothing more to say. However, 
when there are infinitely many, as in component structures, there are
 continuity conditions on the groups $F$ and on
the modules. Our purpose here is to elucidate this. 

We now have three  different but equivalent abelian categories of equivariant
objects: sheaves, modules and diagrams. There are obvious generators
in the first two categories, and this gives us a generator in the
third.

     \subsection{Modules}
        The most elementary form of the model is that we are discussing 
        modules over the ring $\RfX[\cW]:=\Gamma(\Q_{\fX}[\cW])$. The statement about 
      generators is routine in this context.

      \begin{lemma}
\label{lem:RXWgens}
The abelian category of $\RfX [\cW]$-modules is generated by 
$\RfX [\cW]$, and hence the derived category is generated as 
triangulated category by $\RfX[\cW]$. 
\end{lemma}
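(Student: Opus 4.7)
The plan is to observe that this is an instance of the completely general fact that a ring is a compact generator of its own module category, and that nothing about the specific construction of $\RfX[\cW]$ is used beyond its being a ring.

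First I would verify the statement in the abelian category. For any ring $R$, the functor $\Hom_R(R,-)$ is naturally isomorphic to the identity functor on $R\mbox{-mod}$. Hence if $M$ is any $\RfX[\cW]$-module with $\Hom_{\RfX[\cW]}(\RfX[\cW], M) = 0$, then $M = 0$. Equivalently, for any module $M$ and any generating set $\{m_s\}_{s \in S}$ as an abelian group (or just the underlying set of $M$), the map $\bigoplus_{s \in S} \RfX[\cW] \lra M$ sending the $s$-th generator to $m_s$ is surjective, so every module is a quotient of a coproduct of copies of $\RfX[\cW]$. This establishes that $\RfX[\cW]$ is a generator of the abelian category.

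For the derived category statement, I would pass to chain complexes. The triangulated subcategory $\thick(\RfX[\cW])$ (or rather its closure under coproducts $\loc(\RfX[\cW])$) is detected by the functor $\RHom_{\RfX[\cW]}(\RfX[\cW], -)$, which on the derived category is again naturally isomorphic to the identity functor (taking values in graded $\RfX[\cW]$-modules when we remember the action). If $M_\bullet$ is a DG-$\RfX[\cW]$-module with $\RHom_{\RfX[\cW]}(\RfX[\cW], M_\bullet) \simeq 0$ then $H_*(M_\bullet) = 0$, so $M_\bullet \simeq 0$ in the derived category. Hence $\RfX[\cW]$ generates the derived category as a localizing subcategory, and in particular as a triangulated category (in the sense of generation closed under coproducts, suspensions, and cofibres).

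There is really no obstacle here: the lemma is stated merely to record, in the form that will be used in later sections, that the evident generator works. The only thing worth flagging is that when later results invoke ``generators'', they will need the derived-category form, for which one uses the standard argument that $\RHom(\RfX[\cW], -)$ detects acyclicity; all the delicate continuity conditions on $\cW$ discussed in the preamble have already been absorbed into the definition of the single ring $\RfX[\cW] = \Gamma(\Q_{\fX}[\cW])$, so no further care is needed at this point.
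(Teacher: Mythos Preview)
Your argument is correct and matches the paper's treatment: the paper gives no proof at all, simply remarking that ``the statement about generators is routine in this context,'' and your observation that a ring always generates its own (abelian and derived) module category via $\Hom_R(R,-)\cong\mathrm{id}$ is precisely the routine fact being invoked. Your closing remark that the continuity issues have already been absorbed into the definition of the single ring $\RfX[\cW]$ is exactly the point of this subsection in contrast to the subsequent sheaf and diagram versions.
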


\subsection{Sheaves}

The most approachable  form of the model is that we are discussing 
modules over the sheaf $\Q_{\fX}[\cW]$ of rings. We will sketch a
direct proof, but it follows from Lemma \ref{lem:RXWgens} and the equivalence of categories
established in the Section \ref{sec:componstone}. 

\begin{lemma}
The abelian category of sheaves of $\Q_{\fX}[\cW]$-modules is generated by 
$\Q_{\fX} [\cW]$, and hence the derived category of sheaves of modules is generated as 
triangulated category by $\Q_{\fX}[\cW]$. 
  \end{lemma}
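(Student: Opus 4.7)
The plan is to establish abelian generation directly---showing that $\Hom(\Q_{\fX}[\cW], -) = \Gamma(\fX; -)$ is conservative---and then leverage the softness of all $\Q$-sheaves on $\fX$ (Lemma \ref{lem:soft}) to bootstrap to the triangulated statement.

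For the abelian claim, freeness of $\Q_{\fX}[\cW]$ on one generator gives
$$\Hom_{\Q_{\fX}[\cW]}(\Q_{\fX}[\cW], \cF) = \Gamma(\fX; \cF),$$
so it suffices to show that a nonzero $\cF$ admits a nonzero global section. Given such $\cF$, pick $x \in \fX$ with $\cF_x \neq 0$ and a representing section $s \in \cF(U)$ on a clopen neighbourhood $U$ of $x$, which is possible since clopens form a basis for $\fX$. The sheaf axiom applied to the disjoint clopen cover $\fX = U \amalg U^c$ gives the splitting $\cF(\fX) = \cF(U) \oplus \cF(U^c)$, so $(s, 0)$ is a global section whose stalk at $x$ coincides with that of $s$ and is therefore nonzero.

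For the triangulated statement, Lemma \ref{lem:soft} implies that $\Gamma(\fX;-)$ is exact on sheaves of $\Q$-modules, and hence on $\Q_{\fX}[\cW]$-modules since softness concerns only the underlying sheaf of abelian groups. Consequently $R\Gamma = \Gamma$, and for any $\cF^\bullet \in D(\cW\!-\!\shv/\fX)$ one obtains
$$\Hom_D(\Q_{\fX}[\cW][n], \cF^\bullet) = H^{-n}\Gamma(\fX; \cF^\bullet) = \Gamma(\fX; \mathcal{H}^{-n}(\cF^\bullet)).$$
Vanishing for all $n$ forces every cohomology sheaf to have zero global sections, and the abelian case then makes each cohomology sheaf vanish, giving $\cF^\bullet \simeq 0$.

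The main subtlety is extending a locally supported section to a nonzero global section---but the Stone-space structure makes this routine via the clopen splitting, and softness removes any residual friction when passing to complexes. Alternatively one may simply invoke the equivalence of Section \ref{sec:componstone} to transport Lemma \ref{lem:RXWgens} directly.
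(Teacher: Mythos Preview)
Your proof is correct and rests on the same mechanism as the paper's: extend a local section over a clopen neighbourhood by zero on the complement. The paper frames this as constructing an epimorphism $\bigoplus \Q_{\fX}[\cW]\to M$ that is surjective on stalks, whereas you frame it as showing $\Gamma(\fX;-)$ is conservative; both also note the alternative of transporting Lemma~\ref{lem:RXWgens} through the equivalence of Section~\ref{sec:componstone}.

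One small point of precision: in an abelian category, ``$\Hom(G,-)$ detects zero objects'' is not the same as ``$G$ is a generator'' without further input, so your ``it suffices to show that a nonzero $\cF$ admits a nonzero global section'' needs a word of justification. You have two clean fixes already present in your argument. First, your construction actually shows more than you state---for \emph{every} $a\in\cF_x$ the section $(s,0)$ has germ $a$ at $x$, so global sections surject onto each stalk and the canonical map $\bigoplus_{\Gamma(\cF)}\Q_{\fX}[\cW]\to\cF$ is an epimorphism; this is exactly the paper's formulation. Second, the exactness of $\Gamma$ (softness, which you invoke for the derived statement) already upgrades ``detects zero objects'' to ``faithful'', hence to ``generator''. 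Either remark closes the gap; your derived-category argument is then fine as written.
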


  \begin{proof}
    It suffices to show that for an arbitrary module $M$, 
we can construct an epimorphism from a  sum of copies of
$\Q_{\fX}[\cW]$, and for this it suffices to construct a map which is
an  epimorphism on stalks.
   
 If $x\in M_K$,  then we may define  $f_K: \Q
    [W_G(K)]\lra M_K$ by taking $f_K(1)=x$. From the sheaf condition,
    $x\in M_K$ is represented in an open and closed neighbourhood
    $U_K$ of $K$, and by choosing a smaller neighbourhood if
    necessary, $N_G(K')\subseteq N_G(K)$. From the equivariance
    condition it is consistent to extend $f$ over $U_K$ by
    $f(K')=x(K')$. Then $f$ can be extended by zero on the complement
    of $U_K$.
        \end{proof}

 \subsection{The standard model}

 The model which makes the link with topology is the standard
 model. From the equivalence of abelian categories it is now clear it
 has a single generator which consists of the image of the object
 $\Gamma (\Q_{\fX}[\cW])$. The main  purpose of Section \ref{sec:gens} is to
 make this explicit.

The constant sheaf $\Q_{\fX}$ is the diagram of rings $R_A$ as in
Subsection \ref{subsec:splicingrings}. By Lemma \ref{lem:RcWfXPB} the image
of the generator $\Q_{\fX}[\cW]$ is 
$$R_A[\cW]=\prod_{\height(H_0)=a_0} \cIi_{H_0|a_1}
\prod_{\height(H_1)=a_1} \cIi_{H_1|a_2} \cdots 
\prod_{\height(H_{s-1})=a_{s-1}}\cIi_{H_{s-1}|a_s}\prod_{\height(H_s)=a_s}
\Q [W_G(H_s)]:  $$
formally the same as for  the constant sheaf $\Q_{\fX}$ except that the final $\Q$ in the
$H_s$ factor should be replaced by $\Q[W_G(H_s)]$. The generation
statement then follows from the equivalence of Section
\ref{sec:componstone} and  Lemma \ref{lem:RXWgens}.

\begin{lemma}
  \label{lem:AGWgens}
  The abelian category $\cA (\fX, \cW)$ is generated by the
diagram $R_{\bullet}[\cW]$, with value $R_A[\cW]$ at $A$. \qqed
  \end{lemma}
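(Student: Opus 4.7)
The plan is to deduce the lemma directly from the chain of equivalences of abelian categories recorded in Subsection \ref{subsec:equiv}, together with Lemma \ref{lem:RXWgens}. First I would recall that an equivalence of abelian categories carries a (set of) generator(s) to a (set of) generator(s): if $F\colon\cC\to\cD$ is an equivalence and $G\in\cC$ generates $\cC$, then for any nonzero $X\in\cD$ we have $F^{-1}(X)\neq 0$, so there is a nonzero map $G\to F^{-1}(X)$, and applying $F$ produces a nonzero map $FG\to X$. Thus it suffices to identify the image of the known generator $\RfX[\cW]$ of $\RfX[\cW]\modules$ (Lemma \ref{lem:RXWgens}) under the equivalence with $\cA(\fX,\cW)$.

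Next I would describe the equivalence $\RfX[\cW]\modules \simeq \cA(\fX,\cW)$ on the level of objects. By construction, it sends a module $M$ to the diagram $A\mapsto R_A[\cW]\otimes_{\RfX[\cW]} M$; equivalently, one first passes through the equivalent sheaf category by reconstructing the \'etale space from the module (as in the proof of Lemma 4.7, extended equivariantly as in Section \ref{sec:componstone}), and then records the associated cocartesian diagram. In either description the generator $\RfX[\cW]$ maps to the diagram with $A$-vertex $R_A[\cW]\otimes_{\RfX[\cW]}\RfX[\cW] \cong R_A[\cW]$, with the structural maps being the extension-of-scalars maps of the cube. By Lemma \ref{lem:RcWfXPB} this is precisely the diagram $R_{\bullet}[\cW]$.

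Combining these two steps gives the lemma: the image of $\RfX[\cW]$ under the equivalence is $R_{\bullet}[\cW]$, and since generators are preserved by equivalences, $R_{\bullet}[\cW]$ generates $\cA(\fX,\cW)$. The main thing to be careful about is the explicit identification of the image of the generator in step two — one wants to be sure that cocartesianness of the diagram $R_{\bullet}[\cW]$, needed for it to lie in $\cA(\fX,\cW)$ at all, is indeed the statement $R_B[\cW]\otimes_{R_A[\cW]}R_A[\cW]\cong R_B[\cW]$, which is immediate. Everything else is formal from Lemma \ref{lem:RXWgens} and the equivalence $\RfX[\cW]\modules\simeq\cA(\fX,\cW)$ of Subsection \ref{subsec:equiv}.
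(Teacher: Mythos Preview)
Your proposal is correct and follows exactly the route indicated in the paper: the text immediately preceding the lemma says ``The generation statement then follows from the equivalence of Section \ref{sec:componstone} and Lemma \ref{lem:RXWgens}'', and you have spelled out precisely this, identifying the image of the generator via Lemma \ref{lem:RcWfXPB}. Your extra care in checking cocartesianness and in recording why equivalences preserve generators is fine but not needed.
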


\part{Algebraic models of Weyl-finite rational $G$-spectra}

Blocks of subgroups with finite Weyl groups are particularly simple, but there are
still issues to be dealt with that have not arisen for tori or in low
dimension.

As in other cases, the framework of the proof is given by two  adelic pullback
cubes, one in topology and one in algebra. The one in topology shows
one can assemble the category of $G$-spectra from simple pieces,
and the one in algebra  shows that one can assemble the algebraic
model from simple pieces.  We need only check that the simple pieces
in topology and algebra are equivalent. For blocks with finite Weyl
groups we saw in Part 1 that
the adelic cube describes how to build a sheaf on a filtered space
from its stalks when the pure strata are discrete and in Part 2 we
upgraded this to take account of the component structure. We
described the algebra of this in detail, and in Section \ref{sec:Rtop} we
show that the pullback cube can be realised in homotopy theory where
it is also a pullback. Parts 1 and 2 showed that the entire algebraic
model can be recovered from the stalks and the component structure.  
In Part 3, we show the same is true on the topological side.

\section{The topological pullback cube}
\label{sec:Rtop}
The diagram of rings $R_A$ from Section \ref{sec:ringcubes}
is both a pullback and homotopy pullback. The strategy is to construct
a counterpart homotopy pullback in the world of  naive-commutative
ring $G$-spectra. When  the component structure is trivial, this is
the complete argument. In general a further argument is necessary to
identify generators and their endomorphism rings. 

In the present section we will construct the diagram of commutative
ring spectra, and in Section \ref{sec:topgens} we will discuss
generators.

  \subsection{The cube of ring spectra}
  We define naive-commutative ring spectra $R_A^{top}$.

  We begin with the stalks at a subgroup $K$. By hypothesis, we are
  working in a clopen set $\fX$ in which all subgroups have finite
  Weyl groups. Accordingly $K$ has a system of clopen neighbourhoods
  $U_{\alpha}$ so that $\bigcap_{\alpha}  U_{\alpha}
  =\{K\}$. Associated to each clopen  $U$ there is
  an idempotent $e_U \in [S^0,S^0]^G$ with support $U$.
  \begin{lemma}
    The map
    $$X\lra L_KX=\colim_U e_UX$$
    is smashing Bousfield localization away from $G$-spectra $X$ with
    $\Phi^KX\simeq 0$. Accordingly
    $$S^0\lra S^0_K$$
    may be realized as a map of naive-commutative ring spectra. 
    \end{lemma}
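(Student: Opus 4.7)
The strategy is to build $L_K$ as a filtered colimit of the elementary Bousfield localizations associated to the Burnside ring idempotents $e_U$ for clopen neighbourhoods $U$ of $K$; each summand $e_U S^0$ is a naive-commutative ring spectrum essentially by construction, and we only need to verify that the colimit inherits that structure and that the resulting localization has the advertised acyclics.

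First I would record that for any clopen $U\subseteq\fX$, smashing with the summand $e_U S^0$ of $S^0$ is a smashing Bousfield localization. The local objects are the $G$-spectra $X$ with $\Phi^H X\simeq 0$ for every $H\notin U$, and the acyclics are those with $\Phi^H X\simeq 0$ for every $H\in U$. This follows from the tom Dieck splitting, which rationally identifies $e_U$ stalkwise as the projection onto the components indexed by $(H)\in U$, together with the fact that smashing with a summand of $S^0$ is exact and idempotent.

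Next I regard the clopen neighbourhoods $U$ of $K$ as a filtered diagram under reverse inclusion: if $V\subseteq U$, the identity $e_V=e_V e_U$ supplies a transition map $e_U X\to e_V X$ given by multiplication by $e_V$. For $L_K X:=\colim_U e_U X$ I then compute geometric fixed points stalkwise, using that $\Phi^{(-)}$ commutes with filtered colimits. At $H=K$ every term equals $\Phi^K X$ with identity transition maps (since $K\in U$ throughout the system), while for $H$ not conjugate to $K$ the Hausdorff property of $\fX$ provides some $U\ni K$ with $H\notin U$, whence $\Phi^H(e_V X)=0$ for every $V\subseteq U$ and the colimit vanishes. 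This pins down $L_K X$ up to equivalence, identifies the acyclic class as $\{X:\Phi^K X\simeq 0\}$, and the smashing property follows from the exchange of filtered colimits with smash products.

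For the ring-theoretic upgrade, each $e_U S^0$ carries a naive-commutative ring $G$-spectrum structure with unit $e_U$ and multiplication inherited from $S^0$, and since $e_V$ is central and idempotent the projection $e_U S^0\to e_V S^0$ is a map of such rings. Filtered colimits in naive-commutative ring $G$-spectra are created at the underlying level, so $S^0_K:=\colim_U e_U S^0$ inherits the structure and $S^0\to S^0_K$ is the required map. The main delicate point I anticipate is the model-theoretic bookkeeping: one must work within a suitable model structure on naive-commutative ring $G$-spectra (for example the positive flat one) in which the point-set filtered colimit along these split projections also computes the homotopy colimit, so that the algebraic identification of $L_K S^0$ is genuinely realized by a cofibrant-to-cofibrant naive-commutative structure map.
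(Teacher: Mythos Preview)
Your argument is correct and follows the same approach as the paper: identify $e_U S^0$ as a smashing localization with acyclics supported on $U^c$, and pass to the filtered colimit over shrinking clopen $U\ni K$ to obtain the localization with acyclics $\{T:\Phi^K T\simeq 0\}$. The paper's proof consists of two sentences recording exactly these two facts, so you have simply supplied the details (the stalkwise $\Phi^H$ computation, the commutation with filtered colimits, and the ring-spectrum bookkeeping) that the paper leaves implicit.
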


    \begin{proof}
The map $S^0\lra e_U S^0$ is smashing localization away from objects
supported on $U^c$ and the map $S^0\lra S^0_K$ is smashing
localization away from objects with $\Phi^KT\simeq 0$.
      \end{proof}

  \begin{defn}
Given  $A=\{a_0>a_1>\cdots 
  >a_s\} \subseteq [r]$, we may  construct a sheaf $\Rtop_A$ on $\fX$
  as follows. First, if $s=0$ we  start with  the commutative ring
  $G$-spectra
  $$\Rtop_K=S^0_K .$$

 Next for $a_s\in [0,r]$, we take 
  $$\Rtop_{a_s}=\prod_{\height(K_s)=a_s}\Rtop_{K_s}. $$
  Now if 
  $d_0A=\{a_1>\cdots >a_s\}$ and $\mcR_{d_0A}$ is defined 
  we take 
  $$\Rtop_A=\prod_{\height(K_0)=a_0}\cIi_{K_0|a_1} \Rtop_{d_0A}. $$
  By construction there is a map 
  $$\Rtop_{d_0A}\lra \mcR_A.  $$
  Altogether we have 
  $$\Rtop_A=\prod_{\height(K_0)=a_0}\cIi_{K_0|a_1}\prod_{\height(K_1)=a_1}\cdots
  \cIi_{K_{s-1}|a_s}\prod_{\height(K_s)=a_s}S^0_{K_s} . $$
\end{defn}

      \subsection{The cube is a pullback}
\newcommand{\PBS}{S^0_{PB}}
We have constructed a cube $\Rtop$, with 
$\Rtop_{\emptyset}=S^0_{\fX}$.

\begin{lemma}
  \label{lem:htpyRtop}
Taking homotopy groups gives the corresponding rings 
   $$\pi^G_*(\Rtop_A)=R_A, $$
   and this is natural for the structure maps in the cube. 
 \end{lemma}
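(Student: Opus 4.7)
The plan is to proceed by induction on $|A|$, reducing to two essentially orthogonal inputs: the computation of $\pi^G_*$ of a single stalk $S^0_K$, and the compatibility of $\pi^G_*$ with the product-and-localization operations that build $\Rtop_A$ from the stalks. The naturality claim will be immediate from the construction of the maps in both cubes.

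First I would establish the base case $A=\{a\}$ and indeed the finer base case of a single stalk. For a Weyl-finite subgroup $K\in \fX$, the localization $S^0\lra S^0_K=\colim_U e_US^0$ is smashing away from objects with vanishing $\Phi^K$; since $\pi^G_*(S^0)=\Gamma(\Q_{\fX})$ rationally, and $e_U$ is the idempotent with support $U$, filtered colimits commute with $\pi^G_*$ and we obtain
$$\pi^G_*(S^0_K)=\colim_{U\ni K} e_U\,\Gamma(\Q_{\fX})=\Q,$$
concentrated in degree zero (this last point uses finiteness of $W_G(K)$). Then, since $\pi^G_*$ commutes with arbitrary products of spectra,
$$\pi^G_*\!\left(\prod_{\height(K_s)=a_s}S^0_{K_s}\right)=\prod_{\height(K_s)=a_s}\Q=R_{\{a_s\}}.$$

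For the inductive step, assume $\pi^G_*(\Rtop_{d_0A})=R_{d_0A}$. The construction gives
$$\Rtop_A=\prod_{\height(K_0)=a_0}\cIi_{K_0|a_1}\Rtop_{d_0A},$$
and I would commute $\pi^G_*$ past these two operations. Products commute with $\pi^G_*$ as noted. The localization $\cIi_{K_0|a_1}$ is a filtered colimit of multiplication by idempotents $e_V$ over cofinite $V\subseteq a_1$, so filtered colimits applied to $\pi^G_*$ yield
$$\pi^G_*\!\left(\cIi_{K_0|a_1}\Rtop_{d_0A}\right)=\cIi_{K_0|a_1}\pi^G_*(\Rtop_{d_0A})=\cIi_{K_0|a_1}R_{d_0A}.$$
Assembling over $K_0$ of height $a_0$ recovers precisely the formula defining $R_A$ in Subsection \ref{subsec:splicingrings}. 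The case $A=\emptyset$ is handled separately: $\Rtop_{\emptyset}=S^0_{\fX}$ and $\pi^G_*(S^0_{\fX})=\Gamma(\Q_{\fX})=R_{\emptyset}$.

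For naturality, the map $d^b:\Rtop_A\lra \Rtop_B$ is built, as on the algebraic side, from the unit of a localization and from localized diagonal inclusions of product factors. Under $\pi^G_*$ these become the unit $R_A\lra \cIi_{K_s|b}R_A$ and the diagonal into $\prod_{\height(H_b)=b}$ respectively, which is the definition of $d^b:R_A\lra R_B$. Iterating as in the inductive definition gives naturality for all cube edges, hence for all composites.

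The main obstacle is ensuring that the infinite products and the localization colimits do in fact commute with $\pi^G_*$ in rational $G$-spectra over $\fX$. For products this is the general formula $\pi_n\prod_iX_i=\prod_i\pi_nX_i$ in any stable setting; the potentially delicate point is that the stalks $S^0_K$ have rational homotopy concentrated in a single degree, so no $\lim{}^1$ phenomena intervene when we later mix products with the filtered colimits defining $\cIi_{K|a}$. With the Weyl-finiteness assumption this is where the hypothesis of the clopen block $\fX$ is essential; once it is in hand, the rest is a bookkeeping induction that mirrors Lemma \ref{lem:GammaRAisRA} exactly.
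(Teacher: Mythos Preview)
Your proposal is correct and follows essentially the same route as the paper: induction on $|A|$, with the inductive step reduced to the two facts that $\pi^G_*$ commutes with products and with the filtered colimits defining $\cIi_{K|a}$. The paper's proof is considerably terser (it dispatches the base case with ``clear by construction'' and does not spell out naturality), but your more explicit treatment of $\pi^G_*(S^0_K)=\Q$ via tom Dieck and of the cube maps adds nothing new beyond unpacking what the paper leaves implicit.
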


 \begin{proof}
It is clear if $A=a_s$ by construction. 
  We then repeatedly use the fact that homotopy preserves products and
  for a module $M(x)$ over $\prod \Q$,  
$$\piG_*(\colim_{U\ni x}e_U M(x))=\colim_{U\ni x}e_U\piG_*(M(x)).$$
      \end{proof}

      \begin{cor}
        \label{cor:toppullback}
  The cube $\Rtop_\bullet$ is a homotopy pullback. 
  \end{cor}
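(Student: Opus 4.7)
The plan is to verify that the canonical map $S^0_\fX = \Rtop_\emptyset \lra \holim_{A \neq \emptyset}\Rtop_A$ is an equivalence by combining Lemma \ref{lem:htpyRtop} with the algebraic exactness statement of Lemma \ref{lem:ringcubeexact}.

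First I would compute $\piG_*$ of the homotopy limit using the Bousfield--Kan spectral sequence for the punctured $(r+1)$-cube:
\[ E_2^{s,t} = \lim\nolimits^s_{\emptyset \neq A \subseteq [r]} \piG_t(\Rtop_A) \Rightarrow \piG_{t-s}\Bigl(\holim_{A \neq \emptyset}\Rtop_A\Bigr). \]
By Lemma \ref{lem:htpyRtop}, the diagram $A \mapsto \piG_*(\Rtop_A)$ is concentrated in degree zero and is the punctured ring cube $R_\bullet$. Lemma \ref{lem:ringcubeexact} then says that the augmented cochain complex of $R_\bullet$ is exact, which gives $\lim^0 R_\bullet = \Gamma(\Q_\fX)$ and $\lim^s R_\bullet = 0$ for $s > 0$. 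The spectral sequence therefore collapses on the zero line, yielding $\piG_*(\holim_{A \neq \emptyset}\Rtop_A) = \Gamma(\Q_\fX) = \piG_*(S^0_\fX)$, with the identification induced by the canonical map.

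To upgrade this computation of $\piG_*$ to an equivalence of $G$-spectra in $\Gspectra\lr{\fX}$, I would verify the corresponding statement on $\Phi^H$ for each $H \in \fX$. Since $\Phi^H$ commutes with finite products and with smashing localization by idempotents, the computation of $\Phi^H(\Rtop_A)$ parallels the stalkwise computation of $(\mcR_A)_x$ in Lemma \ref{lem:acycliccube}: if $\hgt(H) = a$ and $A$ has some $a_0 > a$ then $\Phi^H(\Rtop_A) \simeq 0$, so only the terms indexed by $A \subseteq [0,a]$ contribute, and for those the coordinates above $a$ pick out a single factor at $H$. The same exactness argument used in Lemma \ref{lem:acycliccube} then shows the resulting reduced cube is a homotopy pullback on $\Phi^H$.

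The main obstacle is ensuring that $\Phi^H$ interacts with the iterated product-and-localization construction of $\Rtop_A$ in exactly the way the stalk functor interacts with $\mcR_A$. Once this parallel is pinned down, the stalkwise pullback statements at each $H \in \fX$ combine with the fact that the entire cube lies in $\Gspectra\lr{\fX}$ to force the canonical map $S^0_\fX \lra \holim_{A \neq \emptyset}\Rtop_A$ to be a $G$-equivalence.
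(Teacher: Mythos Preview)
Your Bousfield--Kan computation of $\piG_*$ of the homotopy limit is correct and coincides with the opening of the paper's second proof: Lemmas~\ref{lem:htpyRtop} and~\ref{lem:ringcubeexact} collapse the spectral sequence, and tom Dieck's identification of $\piG_*(S^0_{\fX})$ with $\Gamma(\Q_{\fX})$ finishes that step.

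The gap is in the upgrade from a $\piG_*$-isomorphism to a $G$-equivalence. You propose checking on $\Phi^H$ and mirroring the stalkwise argument of Lemma~\ref{lem:acycliccube}. The obstacle you flag is real and is not easily dispatched: geometric fixed points are built from smashing with $\tilde{E}\cF$, and smash products do \emph{not} commute with the infinite products used to form $\Rtop_A$. So the assertion ``if $\hgt(H)=a$ and $a_0>a$ then $\Phi^H(\Rtop_A)\simeq 0$'' does not follow from the termwise vanishing $\Phi^H(S^0_{K_s})\simeq 0$. The parallel with sheaf stalks fails precisely because stalks are filtered colimits (and hence exact and product-friendly in this setting), while $\Phi^H$ is not.

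The paper's route is different: instead of $\Phi^H$ it uses \emph{restriction} to $H$-spectra, which, being a right adjoint, does commute with the infinite products and idempotent localizations in $\Rtop_A$. One then reruns the spectral-sequence argument for the clopen set $\fY\subseteq \sub(H)/H$ lying over $\fX$, appealing to induction on the size of $H$. The residual issue---that $\fY\to\fX$ is finite-to-one rather than a bijection---is handled by a diagonal/fusion argument exhibiting the restricted map as a retract of the corresponding $H$-map. If you want to salvage the $\Phi^H$ approach, you would need an independent argument that $\Phi^H$ commutes with these particular infinite products (e.g.\ by showing each product is already $\Phi^H$-local or vanishes after smashing with a suitable finite localization), but that is not supplied.
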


We give two proofs. The first is more direct, but the second embodies
the use of algebraic models more effectively.
  
  \begin{proof} (Strategy 1)
    We have a homotopy pullback cube in topology, which may be proved
    by the method of \cite{adelicm}.   
  \end{proof}

      \begin{proof}    (Strategy 2)
        We form the homotopy pullback $\PBS$ of the punctured cube. 
By construction we  obtain a map $S^0\lra \PBS$ and we need to know
this is an equivalence. The pullback cube gives a spectral
        sequence for $\piG_*(\PBS)$, and by Lemma \ref{lem:htpyRtop}
            it starts from the adelic cohomology. By Lemma \ref{lem:ringcubeexact} the 
    spectral sequence collapses to give the homotopy of $\PBS$, which
    is identified as $\Gamma (\Q_{\fX})$. By tom Dieck's calculation
    this is $\piG_*(S^0)$.

    We now need to repeat the argument for each subgroups $H$ of
    $G$. Of course we have a map $\sub(H)/H\lra \sub(G)/G$, and the
    inverse image of $\fX$ in $\sub(H)/H$ is a clopen set $\fY$
    dominated by $H$. We may assume by induction on the dimension and
    number of components that the result for $\fY$ has already been
    proved.
    
    However the map $\fY\lra \fX$ will not be  a bijection. If we let
    $\fX|H$ denote the image, it is clear that the parts of the
    diagram corresponding to $\fX\setminus \fX|H$ are all trivial when
    restricted to $H$. The fibre of $\fY\lra \fX|H$ over $K\subseteq
    H$ is the finite set $N_G(K)/N_H(K)$. There are maps
    $$R_{\fX|H}\stackrel{\Delta} \lra R_{\fY}\lra R_{\fX|H}$$
where the first is diagonal and the second is fusion; the composite is
multiplication by $[N_G(K):N_H(K)]$ on $K$, and hence is an
equivalence. We infer that since $S^0\lra (\PBS)_H$ is a $\pi^H_*$
isomorphism so is $S^0\lra (\PBS)_G$. 
  \end{proof}

As in \cite[4.1]{diagrams} and \cite[9.5]{adelicm}, this shows the
category of $G$-spectra can be reconstructed from the diagram of module categories. 
      \begin{cor}
        \label{cor:wtdPB}
  $$\Gspectra/\fX\simeq \wilim_A \Rtop_A\modules. \qqed $$
  \end{cor}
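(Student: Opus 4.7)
The plan is to deduce this as a direct formal consequence of Corollary \ref{cor:toppullback} together with the diagrammatic Quillen machinery cited immediately before the statement. By definition $\Gspectra/\fX$ is the category of modules over the smashing localisation $S^0_{\fX}$, and Corollary \ref{cor:toppullback} identifies $S^0_{\fX}$ as the homotopy pullback of the punctured cube $\Rtop_\bullet$ of naive-commutative ring $G$-spectra. So the problem reduces to assembling the category of modules over a homotopy limit of ring spectra from the diagram of module categories over the terms.

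First, I would invoke the Cellularization Principle \cite[4.1]{diagrams}. The diagram $A \mapsto \Rtop_A\modules$ has extension-of-scalars for its structure functors, and each is a left Quillen functor with restriction of scalars as right adjoint. The principle then asserts that the cellularization of this diagram category at the object $\Rtop_\bullet$ is Quillen equivalent to the category of modules over the homotopy limit ring, and this homotopy limit ring is $S^0_{\fX}$ by Corollary \ref{cor:toppullback}.

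Second, I would identify this cellularization with the weighted inverse limit using \cite[9.5]{adelicm}: the cellularized diagram category is precisely the subcategory of homotopy cocartesian $\Rtop_\bullet$-modules, which is what $\wilim_A \Rtop_A\modules$ denotes. Chaining the two equivalences gives the stated Quillen equivalence $\Gspectra/\fX \simeq \wilim_A \Rtop_A\modules$.

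The only real obstacle is checking that the hypotheses of the cited machinery apply to our cube: one needs compactness of the relevant generators and appropriate behaviour of the model structures under extension of scalars. Since $\fX$ has finite Cantor-Bendixson rank, the cube has only finitely many vertices, the ring spectra $\Rtop_A$ are constructed from smashing localisations and finite products, and the entire setup of \cite{diagrams} and \cite{adelicm} was developed with exactly such adelic cubes in mind, so this verification is essentially bookkeeping. Thus the corollary follows formally once Corollary \ref{cor:toppullback} is in hand, which explains why the author closes the statement with \texttt{\textbackslash qqed} and no further argument.
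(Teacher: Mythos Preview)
Your proposal is correct and matches the paper's approach exactly: the paper simply records, immediately before the corollary, that it follows ``as in \cite[4.1]{diagrams} and \cite[9.5]{adelicm}'' from the homotopy pullback statement of Corollary~\ref{cor:toppullback}, and closes with \texttt{\textbackslash qqed}. Your unpacking of these two citations (Cellularization Principle, then identification of the cellular objects with homotopy cocartesian modules) is precisely the intended argument.
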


  \section{Component structures}
  \label{sec:topgens}

We observed in Lemma \ref{lem:RcWfXPB} that the ring $\RfX [\cW]$ is a pullback of the rings
$R_A[\cW]$, and hence 
  $$\Q[\cW]\modules \simeq \wilim_A \Rtop_A [\cW] \modules. $$
Similarly, Corollary \ref{cor:wtdPB} we have seen 
 $$\Gspectra\lr{\fX}\simeq \wilim_A \Rtop_A\modules\!-\! \Gspectra. $$
It remains to establish the equivalence
$$\Rtop_A\modules\!-\!\Gspectra\simeq \Rtop_A[\cW]\modules.  $$
We do this by identifying a single generator of $\Rtop_A$-module
$G$-spectra, and showing its endomorphism ring has homotopy $R_A[\cW]$
in degree 0. By Shipley's Theorem and the formality of DGAs with
homotopy in degree 0, this completes the argument.

\subsection{Stalkwise}
Working with singleton isotropy is straightforward. 
          
          \begin{lemma}
            \label{lem:Rtopgen}
If $\Rtop_K$ is a ring $G$-spectrum with geometric isotropy concentrated
at $K$ where $W_G(K)$ is finite
then the category of  $\Rtop_K$-modules is generated by $G/K_+\sm 
         \Rtop_K$. 
        \end{lemma}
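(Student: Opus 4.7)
The plan is to show $G/K_+\sm\Rtop_K$ generates the homotopy category of $\Rtop_K$-module $G$-spectra by verifying that its right orthogonal is zero: if $M$ is an $\Rtop_K$-module with $[G/K_+\sm\Rtop_K,M]^G_*=0$, I want to deduce $M\simeq 0$. By the free--forget adjunction together with the Wirthm\"uller isomorphism,
\[
[G/K_+\sm\Rtop_K,M]^G_* \;=\; [G/K_+,M]^G_* \;=\; \pi^K_*(M),
\]
so the task reduces to showing that an $\Rtop_K$-module with vanishing $\pi^K_*$ must be contractible.

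First, since $M$ is a module over $\Rtop_K=S^0_K$, each $\Phi^H M$ is a module over $\Phi^H\Rtop_K$; by construction $\Phi^H\Rtop_K\simeq 0$ for every $H$ not $G$-conjugate to $K$, so $M$ has geometric isotropy contained in $\{(K)\}$. Next, I would identify $\pi^K_*(M)$ with $\Phi^K(M)$ by applying the isotropy-separation cofibre sequence $E\cP(K)_+\to S^0\to\widetilde{E}\cP(K)$, for the family $\cP(K)$ of proper subgroups of $K$, to the $K$-spectrum $\res^G_K M$. Its geometric isotropy (as a $K$-spectrum) is still contained in $\{K\}$, because no proper subgroup of $K$ is $G$-conjugate to $K$; hence $E\cP(K)_+\sm \res^G_K M\simeq 0$, and the cofibre sequence gives $\res^G_K M\simeq \widetilde{E}\cP(K)\sm\res^G_K M$, whence $\pi^K_*(M)=\pi^K_*(\widetilde{E}\cP(K)\sm\res^G_K M)\cong \Phi^K(M)$. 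The vanishing of $\pi^K_*(M)$ then forces $\Phi^H M\simeq 0$ for every subgroup $H$, and the equivariant Whitehead theorem in its geometric fixed-point form yields $M\simeq 0$.

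The main obstacle will be executing the identification $\pi^K_*(M)\cong \Phi^K(M)$ cleanly, since this relies on the single-conjugacy-class hypothesis rather than being a general feature of $G$-spectra. It is essentially the assertion that, under the classical rational equivalence between $G$-spectra with geometric isotropy $\{(K)\}$ and free $\Q[W_G(K)]$-module spectra (which is available precisely because $W_G(K)$ is finite), the functor $\pi^K_*$ corresponds to underlying homotopy; under this equivalence $G/K_+\sm\Rtop_K$ corresponds to the free rank-one module $\Q[W_G(K)]$, which plainly generates. If preferred, one could appeal to this equivalence directly and transport the obvious generation statement back to the module category, bypassing the isotropy-separation computation.
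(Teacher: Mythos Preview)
Your argument is correct and takes a genuinely different route from the paper's. The paper argues constructively: it starts from the evident generating set $\{G/H_+\sm\Rtop_K\}_H$, observes that a term is zero unless $K$ is subconjugate to $H$, and then for $K\subseteq H$ uses a transfer argument---the composite $G/H_+\to G/K_+\to G/H_+$ acts on any object with geometric isotropy $K$ as multiplication by $|W_H(K)|$, a rational unit---so every nonzero standard generator is a retract of $G/K_+\sm\Rtop_K$. Your detection argument instead reduces generation to the vanishing of the right orthogonal, identifies $[G/K_+\sm\Rtop_K,M]_*$ with $\pi_*\Phi^K(M)$ via isotropy separation (using that no proper subgroup of $K$ is $G$-conjugate to $K$), and finishes with the geometric-fixed-point Whitehead theorem. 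The paper's approach produces explicit retracts, which feed directly into the subsequent computation of the endomorphism ring $\cE_A$; your approach is cleaner as a pure generation statement and makes the link to the classical equivalence with $\Q[W_G(K)]$-module spectra explicit. One terminological quibble: the identification $[G/K_+,M]^G_*\cong\pi^K_*(M)$ is just the induction--restriction adjunction, not the Wirthm\"uller isomorphism.
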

        \begin{proof}
By construction, the category of $\Rtop_K$-module $G$-spectra
          is generated by the objects 
$G/H_+\sm \Rtop_K $. It remains to show that 
each of these can be built from the one with $H=K$. 

First, by geometric isotropy,  the only  terms $G/H_+\sm \Rtop_K$ which are non-trivial 
are  those with $K$ conjugate to a subgroup of $H$. Suppose then that $K\subseteq 
H$.

The composite $H/H_+\lra H/K_+\lra H/H_+$ is multiplication by the
Burnside ring element $[H/K]$. When applied to an object with
geometric isotropy $K$ this is multiplication by $|H/K^K|=W_H(K)$;
since it is non-zero it is a unit in $\Q$.  
        \end{proof}

          \subsection{Vertexwise}
          We now pick a non-empty height flag $A=\{a_0>a_1>\cdots >a_s\}$ and 
assemble the contributions to the      category of modules over $\Rtop_A$.

                 \begin{lemma}
                   The category of $\Rtop_A$-module $G$-spectra
                   is generated by the image of the single object 
                   $$\Rtop_A[G/\bullet_+]:=
\prod_{\height(H_0)=a_0} \cIi_{H_0|a_1}
\prod_{\height (H_1)=a_1} \cIi_{H_1|a_2}     \cdots 
\prod_{\height (H_{s-1})=a_{s-1}}\cIi_{H_{s-1}|a_s}
\prod_{\height(H_s)=a_s}\Rtop_{H_s}\sm (G/H_s)_+.  $$
                  \end{lemma}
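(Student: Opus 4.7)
The plan is to induct on the length $|A|=s+1$ of the flag $A=\{a_0>a_1>\cdots >a_s\}$, using Lemma \ref{lem:Rtopgen} at each stalk and exploiting the product-plus-localization structure of $\Rtop_A$ at each inductive step.

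For the base case $s=0$, the ring is $\Rtop_{a_0}=\prod_{\height(K)=a_0}\Rtop_K$, a product over the discrete stratum $\fD_{a_0}$. Since each $\{K\}\subseteq \fD_{a_0}$ is clopen in $\fX_{a_0}$ (itself a Stone space), there is an associated idempotent in $\pi^G_0\Rtop_{a_0}$ cutting off a single factor $\Rtop_K$. Consequently the module category decomposes as a product of the stalkwise categories, and Lemma \ref{lem:Rtopgen} supplies the generator $(G/K)_+\sm \Rtop_K$ in each factor; their assembled product is precisely $\Rtop_{a_0}[G/\bullet_+]$.

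For the inductive step, write $A=\{a_0\}\cup A'$ with $A'=\{a_1>\cdots >a_s\}$ and assume the result for $A'$. By definition
$$\Rtop_A=\prod_{\height(H_0)=a_0}\cIi_{H_0|a_1}\Rtop_{A'},$$
so $\Rtop_A$ is obtained from $\Rtop_{A'}$ by taking a product indexed by $\fD_{a_0}$ and then applying the smashing Bousfield localization inverting each $\cI_{H_0|a_1}$. Both operations behave well with respect to generation: smashing localization sends a generator to a generator (apply the localization functor), while the product step is governed, as in the base case, by Burnside-ring idempotents attached to clopen neighbourhoods separating the points of $\fD_{a_0}$. Applying both operations to the inductive generator $\Rtop_{A'}[G/\bullet_+]$ and unfolding the resulting formula reproduces exactly the expression claimed for $\Rtop_A[G/\bullet_+]$.

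The main obstacle is handling the infinitude of $\fD_{a_0}$: since $\fD_{a_0}$ need not be clopen globally in $\fX$ and its points may accumulate on strata outside $A$, there is no direct global splitting of $\Rtop_A$-modules into stalkwise ones. The saving point is that we are constructing a \emph{single} generator rather than a coproduct: the object $\Rtop_A[G/\bullet_+]$ encodes all the stalkwise generators simultaneously, and the localizations $\cIi_{H_0|a_1}$ arrange that cofinite subsets of $\fD_{a_0}$ become indistinguishable, matching the continuity built into $\Rtop_A$. Concretely, to verify generation one shows that an $\Rtop_A$-module $M$ with vanishing graded maps from $\Rtop_A[G/\bullet_+]$ must be contractible; applying an idempotent $e_U$ for a clopen $U\subseteq \fX_{a_0}$ meeting $\fD_{a_0}$ in a single chosen $H_0$ reduces the problem to the inductive hypothesis for the factor $\cIi_{H_0|a_1}\Rtop_{A'}$, where generation is known, forcing $M\simeq *$ factorwise and hence globally.
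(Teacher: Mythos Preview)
Your inductive strategy has a genuine gap at the detection step. In the base case you assert that the module category over $\Rtop_{a_0}=\prod_K\Rtop_K$ ``decomposes as a product of the stalkwise categories'', and in the inductive step you conclude ``$M\simeq *$ factorwise and hence globally''. Neither inference is valid: the paper's own Warning~\ref{warn:RARA} records precisely that modules over an infinite product ring are strictly larger than the product of module categories (e.g.\ $\prod\Q/\bigoplus\Q$ has all stalks zero but is nonzero). The idempotents $e_{H_0}$ exist in $\pi^G_0\Rtop_A$, so you can indeed form $e_{H_0}M$ and deduce $e_{H_0}M\simeq *$ from the inductive hypothesis, but there is no principle forcing an $\Rtop_A$-module $G$-spectrum with all $e_{H_0}M\simeq *$ to be contractible. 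Your concluding paragraph acknowledges the infinitude issue but then invokes exactly the step that fails.

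The paper's argument avoids this entirely. Rather than trying to detect $M$ stalkwise, it shows that each of the \emph{standard} generators $G/H_+\sm\Rtop_A$ is a retract of the proposed single generator $\Rtop_A[G/\bullet_+]$. The retraction maps from Lemma~\ref{lem:Rtopgen} are given factorwise (in the $H_s$-factor one has $G/H_+\sm S^0_{H_s}\to G/(H_s)_+\sm S^0_{H_s}\to G/H_+\sm S^0_{H_s}$ with composite the unit $|W_H(H_s)|$, or else the factor is zero), but because both source and target are products these factorwise maps assemble into honest \emph{global} maps of $\Rtop_A$-modules. Thus the thick subcategory generated by $\Rtop_A[G/\bullet_+]$ contains every $G/H_+\sm\Rtop_A$, hence everything. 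This sidesteps any need for stalkwise detection.
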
    
                 \begin{proof}
We have argued in Lemma \ref{lem:Rtopgen} that there $G/H_+\sm \Rtop_K$ is either trivial (if 
$K$ is not subconjugate to $H$) or a retract of $G/K_+\sm 
\Rtop_K$. Applying this at each point gives the required argument. 
\end{proof}

Now we have a single generator, it is natural to consider the 
endomorphism ring spectrum 
$$\cE_A=\Hom_{\Rtop_A}(\Rtop_A[G/\bullet_+], \Rtop_A[G/\bullet_+]).$$

\begin{prop}
\label{prop:piGcEA}
$$\piG_*(\cE_A)=R_A[\cW].$$
\end{prop}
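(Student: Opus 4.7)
The plan is to reduce the computation of $\pi^G_*(\cE_A)$ to a stalkwise calculation and then assemble using the same product-with-localization machinery that proved Lemma \ref{lem:htpyRtop}. Since $\Rtop_A$ is built from the stalk rings $\Rtop_{H_s} = S^0_{H_s}$ by iterated products and idempotent localizations $\cIi_{H_k|a_{k+1}}$ which are smashing, the $\Rtop_A$-module $M := \Rtop_A[G/\bullet_+]$ decomposes according to the same iterated structure, with factor $\Rtop_{H_s} \sm (G/H_s)_+$ at the stalk indexed by $H_s$. Because smashing localizations commute with internal Hom, and because Hom of a product of modules over a product ring (with compatible idempotent support) splits as the corresponding product of Homs, I expect the endomorphism ring spectrum to admit the description
\[
\cE_A \;\simeq\; \prod_{\height(H_0)=a_0} \cIi_{H_0|a_1}\cdots \prod_{\height(H_s)=a_s} \cE_{H_s},
\]
where $\cE_{H_s} := \Hom_{\Rtop_{H_s}}\!\bigl(\Rtop_{H_s}\sm (G/H_s)_+, \Rtop_{H_s}\sm (G/H_s)_+\bigr)$.

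For the stalkwise computation I would use the change-of-module adjunction and the Wirthm\"uller isomorphism to get
\[
\pi^G_*(\cE_{H_s}) \;=\; \pi^G_* F((G/H_s)_+, \Rtop_{H_s}\sm (G/H_s)_+) \;=\; \pi^{H_s}_*(\Rtop_{H_s}\sm (G/H_s)_+).
\]
Now $\Rtop_{H_s} = S^0_{H_s}$ has geometric isotropy concentrated at $H_s$, so in the rational tom Dieck splitting for $\pi^{H_s}_*$ only the fixed points $(G/H_s)^L$ with $L$ conjugate in $G$ to $H_s$ contribute. These form a transitive free $\cW_{H_s}$-set. Combined with $\pi^{H_s}_*(S^0_{H_s}) = \Q$ concentrated in degree $0$, this gives $\pi^G_*(\cE_{H_s}) = \Q[\cW_{H_s}]$, concentrated in degree $0$, with ring structure coming from the free $\cW_{H_s}$-action.

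To finish, I apply $\pi^G_*$ to the product-with-localization description of $\cE_A$. The homotopy preserves products and commutes with the colimits defining $\cIi_{H_k|a_{k+1}}$ by exactly the argument of Lemma \ref{lem:htpyRtop}: since each $\cE_{H_s}$ has homotopy concentrated in degree zero, no lim${}^1$ issues arise in the relevant spectral sequences, and the product-with-localization computes termwise. The result is
\[
\pi^G_*(\cE_A) \;=\; \prod_{\height(H_0)=a_0} \cIi_{H_0|a_1}\cdots\prod_{\height(H_s)=a_s} \Q[\cW_{H_s}] \;=\; R_A[\cW],
\]
where in the final identification the Weyl-group-compatibility built into the definition of $R_A[\cW]$ matches the action of the component structure on the stalk, exactly as in the description following Lemma \ref{lem:AGWgens}.

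The main obstacle is the first step: rigorously justifying the decomposition $\cE_A \simeq \prod\cIi\cdots\prod \cE_{H_s}$. One must verify that the internal $\Hom_{\Rtop_A}$ is compatible with products of modules and with the smashing Bousfield localizations built into $\Rtop_A$. Compatibility with products on the source variable is formal, and compatibility with the localizations follows because $\cIi$-localization is smashing, so that $\Hom_{\Rtop_A}$ of localized modules coincides with the $\cIi$-localization of the Hom. The subtlety is to confirm that the continuity conditions on the target factors through these localizations correctly, i.e.\ that the germ-spreading data of the source determines the germ-spreading of the Hom. Once this bookkeeping is in place, the remainder is a direct calculation with tom Dieck splittings and the product-colimit exchange already established in Lemma \ref{lem:htpyRtop}.
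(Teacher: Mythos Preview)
Your overall shape is right and close to the paper's, but the route differs in one important respect and your justification of the crucial step is not yet adequate.

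The paper does \emph{not} first establish a spectrum-level splitting of $\cE_A$ and then take $\pi^G_*$. Instead it computes the homotopy groups directly as brackets $[\,\cdot\,,\,\cdot\,]^G$, peeling off products and localizations one layer at a time. Two formal manoeuvres are isolated and applied iteratively:
\begin{itemize}
\item[(1)] $[\prod_i P_i,\prod_j P_j]^G=\prod_j[P_j,P_j]^G$, where the non-obvious step is the vanishing of the cross terms $[\prod_{i\ne j}P_i,P_j]^G$;
\item[(2)] $[\cIi_j P_j,\cIi_j P_j]^G=[P_j,\cIi_j P_j]^G=\cIi_j[P_j,P_j]^G$, using the universal property of the localization and smallness of $P_j$ (as a module over the relevant ring).
\end{itemize}
The point you are missing is the \emph{reason} the cross terms vanish. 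It is not formal: it uses that distinct subgroups at the same height have disjoint clopen neighbourhoods, so the idempotent $e_U$ supported near $j$ annihilates $\prod_{i\ne j}P_i$ while acting as the identity on the target. Your sentence ``compatibility with products on the source variable is formal'' is therefore incorrect; this is exactly where the geometric input enters.

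There is a second subtlety in your proposed spectrum-level decomposition. After applying $\cIi_{H_0|a_1}$, the ring $\cIi_{H_0|a_1}\prod_{H_1}(\cdots)$ is no longer a product ring, so your ``product of rings with idempotent support'' argument does not apply at the inner layers. The paper avoids this by first using smallness to pull the localization $\cIi_{j_0}$ outside the bracket, and only then splitting the now-unlocalized product $\prod_{j_1}$ via step (1). Your ordering (split everything, then localize) reverses this and would need an additional argument.

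Your explicit stalkwise computation of $\pi^G_*(\cE_{H_s})=\Q[\cW_{H_s}]$ via the free--forgetful adjunction, Wirthm\"uller, and tom Dieck is correct and in fact more detailed than what the paper records; the paper leaves this endpoint implicit. So: keep that part, but replace your spectrum-level decomposition by the bracket-level reductions (1) and (2), and supply the height/neighbourhood argument for the vanishing of cross terms.
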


\begin{proof}
There are formal calculations we apply iteratively. 

Argument 1: we make an 
argument of the following form, for suitable $G$-spectra  $P_i$
$$[\prod_i P_i, \prod_j P_j]^G=\prod_j [\prod_i P_i, P_j]^G=\prod_j [P_j, 
P_j]^G; $$
the first equality is the universal property of the product. The 
second involves showing $[\prod_{i\neq j}P_i, P_j]^G=0$.

Argument 2: we make an 
argument of the following form, for suitable $G$-spectra  $P_i$
$$[\prod_i \cIi_i P_i, \prod_j \cIi_j P_j]^G=\prod_j [\prod_i \cIi_i
P_i, \cIi_j P_j]^G=\prod_j \cIi_j [P_j, 
P_j]^G; $$
the first equality is the universal property of the product. The 
second begins as before by showing $[\prod_{i\neq j}\cIi_i P_i, \cIi_j
P_j]^G=0$, as in Argument 1,  and then 
$$[\cIi_j P_j, \cIi_j P_j]^G=[P_j, \cIi_j P_j]^G=\cIi_j [P_j,
P_j]^G.$$
Here the first step is the universal property of the localization
$\cIi_j$ and the second is a smallness statement for $P_j$.

When $A$ is of length 0, we need only apply Argument 1 with
$P_a=R_a$. The justification is that since all the terms are of the
same height $a_0$, there is a neighbourhood of each element $j$ not
containing any $i$ and hence $[\prod_i P_i, P_j]^G=0$. This justifiation
applies in all cases where we use this argument.

When $A=(a_0>a_1)$ we have
$$\begin{array}{rcl}
[\prod_{i_0}\cIi_{i_0}\prod_{i_1}R_{i_1}, 
    \prod_{j_0}\cIi_{j_0}\prod_{j_1}R_{j_1}]^G&=&
\prod_{j_0} [\prod_{i_0}\cIi_{i_0}\prod_{i_1}R_{i_1}, \cIi_{j_0}\prod_{j_1}R_{j_1}]^G\\
&=&
\prod_{j_0} [\prod_{i_1}R_{i_1}, \cIi_{j_0}\prod_{j_1}R_{j_1}]^G\\
&=&
\prod_{j_0} \cIi_{j_0}[\prod_{i_1}R_{i_1}, \prod_{j_1}R_{j_1}]^G\\
&=&
\prod_{j_0} \cIi_{j_0}\prod_{j_1} [\prod_{i_1}R_{i_1}, R_{j_1}]^G\\
&=&
\prod_{j_0} \cIi_{j_0}\prod_{j_1} [R_{j_1}, R_{j_1}]^G
\end{array}$$
We note here that the smallness of $\prod_{i_1}R_{i_1}$ involves
working with maps of modules over that ring.

\end{proof}

                   \begin{lemma}
                     \label{lem:RtopAmodel}
          There is an equivalence of categories 
          $$\Rtop_A\mbox{-mod-$G$-spectra} \simeq R_A[\cW]\mbox{-mod}. $$
        \end{lemma}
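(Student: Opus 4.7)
The plan is to apply a standard Morita-style reduction combined with Shipley's theorem and a formality argument, using the inputs already assembled in Proposition \ref{prop:piGcEA} and the previous two lemmas.

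First, I would use the fact that $\Rtop_A[G/\bullet_+]$ is a small generator of the category of $\Rtop_A$-module $G$-spectra. A standard Schwede--Shipley-type argument then gives a Quillen equivalence
\[
\Rtop_A\mbox{-mod-}G\mbox{-spectra} \simeq \cE_A\mbox{-mod},
\]
where $\cE_A = \Hom_{\Rtop_A}(\Rtop_A[G/\bullet_+], \Rtop_A[G/\bullet_+])$ is the endomorphism ring spectrum of the generator. I would need to check compactness of the generator carefully: this reduces, using Argument 1/Argument 2 style manipulations from the proof of Proposition \ref{prop:piGcEA}, to compactness of the stalkwise pieces $\Rtop_{H_s}\sm (G/H_s)_+$, which follows because $W_G(H_s)$ is finite and localizations commute with the relevant mapping objects.

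Next, since we are working rationally, $\cE_A$ is a ring spectrum over $H\mathbb{Q}$, and Shipley's theorem gives a Quillen equivalence between $\cE_A$-module spectra and DG modules over a DGA $\cE_A^{dg}$ whose homology is $\pi^G_*(\cE_A)$. By Proposition \ref{prop:piGcEA}, we have $H_*(\cE_A^{dg}) = R_A[\cW]$, concentrated in degree $0$.

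Finally, since a DGA whose homology is concentrated in degree $0$ is intrinsically formal (any quasi-isomorphism class of such a DGA is determined by its zeroth homology ring, as there is no room for nontrivial higher Massey products), there is a zig-zag of quasi-isomorphisms of DGAs $\cE_A^{dg} \simeq R_A[\cW]$. Chaining these Quillen equivalences yields
\[
\Rtop_A\mbox{-mod-}G\mbox{-spectra} \simeq \cE_A\mbox{-mod} \simeq \cE_A^{dg}\mbox{-mod} \simeq R_A[\cW]\mbox{-mod},
\]
as required.

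The main obstacle I expect is the verification that the generator $\Rtop_A[G/\bullet_+]$ is genuinely compact in the homotopical sense, since $\Rtop_A$ is itself an infinite localized product and smallness statements for such objects are delicate; this is exactly the subtlety flagged in the last sentence of the proof of Proposition \ref{prop:piGcEA}. The formality input is routine once the homology computation is in hand, and the Morita and Shipley steps are standard.
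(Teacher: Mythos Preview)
Your proposal is correct and follows essentially the same route as the paper: Morita theory with the generator $\Rtop_A[G/\bullet_+]$ to pass to $\cE_A$-modules, Shipley's theorem to reach a DGA, and formality from the homotopy being concentrated in degree~0 (via Proposition~\ref{prop:piGcEA}). The paper's argument is terser and does not explicitly flag the compactness verification you single out, but otherwise the structure is identical.
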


        \begin{proof}
          Since the category of
          $\Rttop_A$-modules is generated by $\Rttop_A[G/\bullet_+]$,
          Morita theory gives an equivalence between $\Rttop_A$-modules in $G$-spectra
          and modules over the endomorphism ring spectrum $\cE_A$

By Proposition \ref{prop:piGcEA}, $\pi^G_*(\cE)=R_A[\cW]$; by Shipley's
          Theorem this is equivalent to modules over a DGA. Since its homotopy
          groups are entirely in degree 0, it is formal and equivalent
          to the ring $R_A[\cW]$.
          \end{proof}

          \subsection{Assembly}
It remains only to assemble the appropriate conclusion to the
equivalences we have obtained at each vertex.

\begin{thm}
                   There is an equivalence
                   $$\Gspectra\lr{\fX} \simeq DG-\cA (\fX, \cW)$$
 \end{thm}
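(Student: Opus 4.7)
The plan is to assemble the theorem from the two adelic pullback descriptions built up in the paper, matching them vertex by vertex. On the topological side, Corollary \ref{cor:wtdPB} gives
$$\Gspectra\lr{\fX} \simeq \wilim_A \Rtop_A\mbox{-mod-}\Gspectra,$$
and on the algebraic side one has, in direct analogy with the (non-equivariant) argument in Part 1 combined with Lemma \ref{lem:RcWfXPB}, a Quillen equivalence
$$DG\mbox{-}\cA(\fX, \cW) \simeq \wilim_A R_A[\cW]\modules.$$
Indeed, the diagram $R_\bullet[\cW]$ is a homotopy pullback of rings with pullback $\RcWfX$, so the Cellularization Principle identifies $\wilim_A R_A[\cW]\modules$ with cellularized $R_\bullet[\cW]$-modules, which by the Cellular Skeleton Theorem (with cellularization at the generator identified in Lemma \ref{lem:AGWgens}) gives $DG\mbox{-}\cA(\fX, \cW)$.

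The vertex-wise matching is exactly Lemma \ref{lem:RtopAmodel}, which uses the compact generator $\Rtop_A[G/\bullet_+]$, the computation of its endomorphism ring in Proposition \ref{prop:piGcEA} showing $\pi^G_*\cE_A = R_A[\cW]$ concentrated in degree $0$, Shipley's Theorem to pass to DGAs, and formality of a DGA with homotopy in a single degree to conclude an equivalence
$$\Rtop_A\mbox{-mod-}\Gspectra \simeq R_A[\cW]\modules$$
at each vertex $A$. It remains to promote this collection of vertex-wise equivalences to an equivalence of weighted limits. The structure maps $\Rtop_A \to \Rtop_B$ send $\Rtop_A[G/\bullet_+]$ to an object built from $\Rtop_B[G/\bullet_+]$ by the same localize-and-fuse operations that define the algebraic cube, and passage to $\pi^G_*$ of endomorphism spectra turns them into the maps $R_A[\cW] \to R_B[\cW]$ of Section \ref{sec:ringcubes} (now with Weyl group coefficients). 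Once this compatibility is in place, the two weighted homotopy limits agree and the theorem follows by stringing together
$$\Gspectra\lr{\fX} \simeq \wilim_A \Rtop_A\mbox{-mod-}\Gspectra \simeq \wilim_A R_A[\cW]\modules \simeq DG\mbox{-}\cA(\fX, \cW).$$

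The main obstacle is the last compatibility: the vertex-wise use of formality produces an equivalence of individual DG-categories, but to identify weighted homotopy limits we need rigidity of the entire diagram, i.e.\ formality of the endomorphism ring spectra considered as a diagram of DGAs. This is manageable precisely because each $\cE_A$ has homotopy concentrated in degree $0$ and the structure maps in the cube are canonically induced by localizations and idempotent splittings, so the only coherence data is already determined on $\pi^G_0$; nevertheless it is here that the explicit generator description of $\cA(\fX,\cW)$ from Lemma \ref{lem:AGWgens}, rather than the ``modules over global sections'' description, is essential, since it pairs directly with the generator $\Rtop_\bullet[G/\bullet_+]$ on the topological side and makes the diagrammatic comparison transparent.
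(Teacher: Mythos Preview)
Your proposal is correct and follows essentially the same route as the paper: reduce both sides to weighted homotopy limits over the punctured cube (Corollary \ref{cor:wtdPB} on the topological side, Lemma \ref{lem:RcWfXPB} with the Cellularization Principle and Cellular Skeleton Theorem on the algebraic side), match vertex-wise via Lemma \ref{lem:RtopAmodel}, and invoke the generator description of Lemma \ref{lem:AGWgens} to conclude. You are in fact more explicit than the paper about the diagrammatic coherence issue (formality of the $\cE_A$ as a \emph{diagram} of DGAs rather than object-by-object), which the paper's proof leaves implicit; your observation that this is handled because homotopy is concentrated in degree~$0$ and the structure maps are determined by localizations and idempotents is the right justification.
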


                 \begin{proof}
 By Corollary \ref{cor:toppullback}, the sphere $S^0_{\fX}$ is the homotopy pullback
 of the diagram $\Rtop_{\bullet}$, so that the category of
$G$-spectra over $\fX$  is
equivalent to  the homotopical standard model (cocartesian diagrams of
modules over $\Rtop_{\bullet}$).

By Lemma  \ref{lem:RtopAmodel}
each of the terms $\Rtop_A$-modules  is equivalent to the
corresponding diagram of modules over $R_A[\cW]$.

Finally from Lemma  \ref{lem:AGWgens} 
in Part 2, this is equivalent to the diagram of modules over
$\RcWfX$-modules, and by  Subsection 
\ref{subsec:equiv}, this is  the
diagram of $\cW$-equivariant sheaves over $\fX$.
                   \end{proof}

\section{Examples of Weyl-finite components}
\label{sec:examples}

We will arrange the examples by dimension, which here refers to the Cantor-Bendixson
rank of the block  (all blocks for one dimensional groups are one dimensional, but there are
many other examples). Many such examples were studied in  \cite{gq1}. We will use the contents of
\cite{t2wqalg} to describe the general situation: the only purpose of
this section is to demonstrate that examples of Weyl-finite blocks  are both
ubiquitous and under full calculational control.

\subsection{1-dimensional examples}
Suppose we have a group $G$ with identity component a torus $T$ and
component group $W=\pi_0(G)$. If the action of
$W$ on $H_1(G; \Q)$ gives a non-trivial simple representation then
the block of full subgroups is a 1-dimensional Weyl-finite block.

Examples of this type can be easily constructed from  the integral
representation theory of $W$. If $\Lambda_0$ is a
non-trivial integral representation of $W$ with
$\Lambda_0\tensor \Q$ simple we may form $G=T\sdr W$, where $T$ is the
Pontrjagin dual of $\Lambda^0=\Hom(\Lambda_0, \Z)$. If $H^2(W; T)\cong
H^3(W; \Lambda_0)$ is
non-zero there are also non-split forms. A little more generality can be 
obtained by making the identity component $\Sigma\times_ZT$ for a 
semisimple group $G$, but for definiteness we can restrict attention
to $G=T\sdr W$.

The full subgroups of $G$ (those meeting all components of $G$) form a
block. For each $W$-submodule  $\Lambda'$ of $\Lambda^0$, there are finitely
many conjugacy classes (in bijection to $H^2(W;\Lambda')$), so
altogether there is a countably infinite set $\fD_0$ of finite
subgroups and $G$ is the only height 1 full subgroup.

The first few naturally occurring examples to come to mind are
as follows. 

\begin{example} (a) The group $G=O(2)$ with $W$ of order 2 give the block
  consisting of $G$ and the dihedral  subgroups, with
  $\Lambda_0=\Zt$. With the same $W$ and $\Lambda_0$, we may also take
  $G=SO(3), Pin (2)$ or $SU(2)$. 

  (b) Take $G$ to be the normalizer of the maximal torus in $SU(3)$
  and $W$ to be the Weyl group $\Sigma_3$. In this case $\Lambda_0=H_1(ST(3))$
  is the reduced regular representation of $\Sigma_3$. Again with same
  $W$ and $\Lambda_0$ we may take $G=SU(3)$ or $PSU(3)$.
\end{example}

  \subsection{2 -dimensional Weyl-finite blocks}
Similar constructions give examples of higher dimensions. The simplest
case arise from an integral representation
 $\Lambda_0=\Lambda_0'\oplus \Lambda_0''$ of $W$ where 
$\Lambda_0'\tensor \Q$ and $\Lambda_0''\tensor \Q$ are non-trivial and
simple. We may again take $T$ Pontrjagin dual to $\Lambda^0=\Hom (\Lambda_0, \Z)$,
and take the split extension $G=T\sdr W$. 

The situation is simplest if the representations  $\Lambda_0'\tensor \Q$ and 
$\Lambda_0''\tensor \Q$ are distinct. In this case, sub-representations of
$\Lambda_0$ are products of subrepresentations of $\Lambda_0'$ and
$\Lambda_0''$: 
$$W-\sub(\Lambda_0)=W\! - \!\sub(\Lambda_0') \times W\! - \!\sub(\Lambda_0'').  $$
Since the cohomology of  $ (\Lambda')^S\oplus (\Lambda'')^S$ is the
sum of the two factors, the space of full subgroups of $G$ is the product of those of
$G'=\Lambda_0'\sdr W$ and $G''=\Lambda_0''\sdr W$.

\subsection{An unmixed 2-dimensional Weyl-finite block}
Finally, we consider the first example where the two rational 
representations are isomorphic to illustrate the additional complexity. 
We take $W$ to be of order 2, and  $\Lambda_0=\Zt\oplus \Zt$.

Up to group isomorphism, there are only two toral groups with this data:
one is  the split extension and the other is a non-split 
extension (there are three non-split extensions, but they are 
 isomorphic as groups).

Because $W$ acts as negation on $\Lambda^0$,  there is no restriction
on the lattice $\Lambda^S$. Furthermore $W$ acts as negation on $\Lambda^S$ and
$H^2(W; \Lambda^S)=0$. If $G$ is a split extension then since 
any full subgroup is also a split 
extension, and the subgroups are in bijective correspondence to 
subgroups $S$ of $T^2$.

Now consider the space $\sub (T^2)$. 
There is one 2-dimensional group. Then the 1-dimensional groups $S$
correspond to 1-dimensional lattices $\Lambda^S$. 
We may think of the 1-dimensional lattices as lying over 
$\PP^1(\Q)$ with $S$ mapping to $\Lambda^S\tensor \Q$. The fibres of 
this map can be identified with the positive integers, as discussed at 
the end of \cite[Remark 6.7]{t2wqalg}.
The 0-dimensional groups $F$ correspond to 2-dimensional lattices 
$\Lambda^F$. 

\subsection{The space of full subgroups of the non-split extension}
If $G$ is a non-split extension then one of the three elements of 
$T^2$ of order 2 is distinguished, because it is a square of an 
element outside $T^2$. Writing $x_e$ for the distinguished element, 
the full subgroups  are in bijective correspondence to 
subgroups of $T^2$ containing $x_0$. The corresponding subgroups of 
$\Lambda^S$ are precisely those lying in the index 2 sublattice 
$\Lambda^e\subseteq \Lambda^0$ corresponding to $x_e$. 
Because $W$ acts as negation, there is no further restriction on
$\Lambda^S$.  

The space of subgroups can be described exactly as in the split case
but with $\Lambda^0$ replaced by $\Lambda^e$.

\subsection{Component structure}
The Weyl group of a subgroup $S$ is the 
dual of $\Lambda^S/2\Lambda^S$, which is $C_2\times C_2$ for 
finite subgroups, $C_2$ for 1-dimensional subgroups and is trivial for 
$G$. This also gives the component structure since if $F\subset S$ we 
find $\Lambda^F\supset \Lambda^S$, and there is a map 
$(\Lambda^F/2\Lambda^F)\lla (\Lambda^S/2\Lambda^S)$ whose dual is the 
map in the component structure.

\subsection{The abelian models}
Consider the punctured cubical diagram of rings: 
$$
\Rcospan=
\left(\begin{gathered}
\xymatrix{
  &\prod_H\Q \rrto \ddto &&\cIi_G\prod_H\Q \ddto\\
   && \Q \ddto \urto&\\
  &\prod_H\cIi_H\prod_{F} \Q\rrto &&\cIi_G \prod_H\cIi_H\prod_{F}\Q\\
\prod_F\Q\rrto \urto&&\cIi_G \prod_F\Q\urto&
}\end{gathered}\right) 
$$

\begin{defn}
(a) The {\em standard model} $\cA (G|\full)$ consists of qce $\Rcospan$-modules, 
 which  consists of diagrams of the following form 
(omitting  $\cF (\fX)$): 
$$\xymatrix{
  &\prod_HV(H)\rrto \ddto &&\cIi_G\prod_HV(H)\ddto\\
  \cF (\fX)\rrto \urto \ddto && V(G)\ddto \urto&\\
  &\prod_H\cIi_H\prod_{F} V(F)\rrto &&\cIi_G \prod_H\cIi_H\prod_{F}V(F)\\
\prod_FV(F)\rrto \urto&&\cIi_G \prod_FV(F)\urto&
}$$

(b) The {\em separated model} $\cA_s(G|\full)$ consists of equivariant 
sheaves over $\fX_G$. 
\end{defn}


\end{document}